\documentclass[10pt]{amsart}
\usepackage{enumerate,  mathrsfs,color,amssymb}

\usepackage[utf8]{inputenc}
\usepackage[T1]{fontenc}

\usepackage{amsmath}
\usepackage{float}
\usepackage[all]{xy}
\usepackage{fancyhdr}
\usepackage{graphicx}

\usepackage{mathtools}
\usepackage{adjustbox}
\usepackage{enumitem}
\usepackage{stackrel}
\usepackage{scalerel}
\usepackage{euscript}
\usepackage{amsmath}
\usepackage{amsthm}
\usepackage{indentfirst}

\usepackage{mathpazo}
\usepackage{mathrsfs}
\usepackage{amsmath}
\usepackage{amssymb}
\usepackage{hyperref}
\usepackage{cleveref}
\usepackage{comment}
\usepackage{color}
\usepackage{tikz}
\usepackage{tikz-cd}
\usepackage{xparse}
\usetikzlibrary{matrix,arrows,decorations.pathmorphing}

\newtheorem{theorem}{Theorem}
\newtheorem{proposition}{Proposition}
\newtheorem{corollary}{Corollary}
\newtheorem{lemma}{Lemma}

\newtheorem{remark}{Remark}

\usepackage{amsfonts} 
\DeclareMathSymbol{\shortminus}{\mathbin}{AMSa}{"39}
\usepackage{mathpazo}

\newcommand{\titleinfo}{On some Lie automorphisms of a class of Kadison-Singer algebras} 
\newcommand{\titleinfoshort}{Lie automorphisms}
\newcommand{\authorinfo}{}

\begin{document}

\title{\LARGE\textbf{\titleinfo}}





\author[Yang]{ Zhujun Yang}
\address{School of Mathematical Sciences, Qufu Normal University, Qufu, Shandong, 273165, China}
\email{zjyang2100@163.com}

\author[Zhou]{ You Zhou}
\address{School of Mathematical Sciences, Qufu Normal University, Qufu, Shandong, 273165, China}
\email{yzhou201302@163.com}

\author[Wang]{  Liguang Wang}
\address{School of Mathematical Sciences, Qufu normal University, Qufu 273165, China.}
\email{wangliguang0510@163.com}

\begin{abstract}
Let $\mathcal{H}$ be an infinite dimensional separable Hilbert space and $\mathcal{N}$ a nest of projections on $\mathcal{H}$ with at least four projections. Let $\xi$ be a separating vector of $\mathcal{N}^{''}$ and $P_{\xi}$ the orthogonal projection from $\mathcal{H}$ onto the one-dimensional subspace of $\mathcal{H}$ generated by $\xi$. Let $\mathcal{L}$ be the lattice generated by $\mathcal{N}$ and $P_{\xi}$, and ${\rm{Alg}}\mathcal{L}$ the corresponding Kadison-Singer algebra. In this note, we show that every Lie automorphism $\psi$ on ${\rm{Alg}}\mathcal{L}$ can be decomposed as $\psi=\epsilon+\tau$ when $I_{-}^{\mathcal{N}}\vee P_{\xi}<I$, where $\epsilon$ is an automorphism and $\tau$ is a linear functional on ${\rm{Alg}}\mathcal{L}$ vanishing on each commutator. When  $I_{-}^{\mathcal{N}}<I$ with $I_{-}^{\mathcal{N}}\vee P_{\xi}=I$, let $\mathcal{N}_{1}=\{N\in\mathcal{N}:N\leq I_{-}^{\mathcal{N}}\}$ and $\mathcal{T}(\mathcal{N}_{1})$ be the corresponding nest algebra. Then ${\rm{Alg}}\mathcal{L}$ is isomorphic to $\mathcal{T}(\mathcal{N}_{1})\oplus \mathbb{C}$. We show that for every Lie automorphism $\psi:\mathcal{T}(\mathcal{N}_{1})\oplus \mathbb{C}\rightarrow \mathcal{T}(\mathcal{N}_{1})\oplus \mathbb{C}$, there exist an (algebraic) isomorphism or a negative of an (algebraic) anti-isomorphism $\epsilon$ on $\mathcal{T}(\mathcal{N}_{1})$, a linear functional $\tau_{2}:\mathbb{C}\to\mathbb{C}$ and two linear functionals $\tau_{1},\tau$ vanishing on the commutators of $\mathcal{T}(\mathcal{N}_{1})$ and $\mathcal{T}(\mathcal{N}_{1})\oplus \mathbb{C}$ respectively, such that
$\psi=((\epsilon+\tau_{1})\oplus \tau_{2})+\tau.$
\end{abstract}


\subjclass[2010]{47L75,46L10}
\keywords{Kadison-Singer algebras; Lie ideals; Lie automorphisms.}
\date{}
\maketitle

\section{Introduction}

Let $\mathcal{A}$ be an associative algebra. A Lie homomorphism $\psi$ of $\mathcal{A}$ into another associative algebra is a linear map which preserves the Lie product. That is, $\psi([A,B])=[\psi(A),\psi(B)]$ for all $A,B\in\mathcal{A}$.
As usual, a bijective Lie homomorphism is called a Lie isomorphism, and if a Lie isomorphism $\psi$ maps $\mathcal{A}$ onto itself, it is termed a Lie automorphism.

The study of Lie isomorphisms of associative and operator algebras—particularly their connection with associative (anti-)isomorphisms-has a long history; see \cite{BBCM,Bresar} and the references therein. A Lie isomorphism
$\psi:\mathcal{A}\to\mathcal{B}$ is said to be standard if it can be decomposed as $\psi=\epsilon+\tau$ , where $\epsilon$ is an (algebraic) isomorphism or a negative of an (algebraic) anti-isomorphism, and $\tau$ is a linear map with image in the center vanishing on each commutator.

In \cite{Miers}, Miers proved that every Lie isomorphism of (von Neumann) factors is standard. Later, Marcoux and Sourour \cite{Marcoux-Sourour} extended this result to nest algebras. For characterizations of Lie isomorphisms on other non-self-adjoint operator algebras, we refer to \cite{Lu2,Lu4,Martindale}.

Kadison-Singer algebras are a new type of non-self-adjoint operator algebras introduced by Ge and Yuan in \cite{Ge-Yuan1,Ge-Yuan2}. Kadison-Singer algebras are reflexive and maximal with respect to their diagonals. The corresponding reflexive lattice is called Kadison-Singer lattice. Wang and Yuan (\cite{Wang-Yuan}) gave a new class of Kadison-Singer algebras with  lattices  generated by a maximal nest and a projection. Hou (\cite{Hou}) generalized results in \cite{Wang-Yuan} and  constructed a new class of Kadison-Singer algebras. Recently, more examples of Kadison-Singer algebras have been introduced and analyzed in \cite{Wu}.

The Kadison–Singer algebras constructed above are closely related to nest algebras. Naturally, we begin to consider Lie isomorphisms on these algebras. In \cite{YZW}, the properties of Lie automorphisms on a class of Kadison–Singer algebras introduced in \cite{Wang-Yuan} are characterized. The algebras constructed in \cite{Hou} are a generalization of those in \cite{Wang-Yuan}. Therefore, we naturally ask what structure Lie isomorphisms on these more general algebras may have.



Now we introduce some classes
 of the Kadison-Singer lattices that we will consider in this paper. The following notations will be fixed throughout this paper.

Let $\mathcal{H}$ be an infinite dimensional separable Hilbert space and $\mathcal{N}$ a nest of projections on $\mathcal{H}$ with at least four projections. The corresponding nest algebra is denoted by $\mathcal{T}(\mathcal{N})$. Let $\xi$ be a separating vector of $\mathcal{N}^{''}$ and $P_{\xi}$ the orthogonal projection from $\mathcal{H}$ onto the one-dimensional subspace of $\mathcal{H}$ generated by $\xi$. Then it follows from \cite{Hou} that the subspace lattice
$$
\mathcal{L}=\{0,I,P,P_{\xi},P\vee P_{\xi}:P\in\mathcal{N}\}
$$
is a Kadison-Singer lattice.

For different nests, since the predecessor of the identity in the Kadison-Singer lattice $\mathcal{L}$ admits three possibilities, we also consider three cases when discussing Lie automorphisms on the corresponding Kadison-Singer algebras: (1) $I_{-}^{\mathcal{N}}\neq I$ with $I_{-}^{\mathcal{N}}\vee P_{\xi}<I$; (2) $I_{-}^{\mathcal{N}}\neq I$ with $I_{-}^{\mathcal{N}}\vee P_{\xi}=I$; and (3) $I_{-}^{\mathcal{N}}=I$. The proof for the case $I_{-}^{\mathcal{N}}=I$ coincides with that for Kadison–Singer algebras associated with maximal nests (see \cite{YZW}); therefore, in this paper we only treat the first two cases.

\section{Preliminaries}
In the following sections, we always assume $I_{-}^{\mathcal{N}}\vee P_{\xi}<I$; the case $I_{-}^{\mathcal{N}}<I$ with $I_{-}^{\mathcal{N}}\vee P_{\xi}=I$ will be considered in the last section.

By calculation, we have $\wedge\{Q_{-}:Q\in\mathcal{N}\}=P_{\xi}$. And for convenience, we let $\mathcal{J}(\mathcal{L})=\{P\in\mathcal{L}:P\neq0,P_{-}\neq I\}$ and $\mathcal{T}(\mathcal{N})|_{N}=\{T|_{N}:T\in{\rm{Alg}}\mathcal{L}\}$.

\begin{proposition}\label{idem}
Let $\mathcal{L}$ be the  subspace lattice given in section 1 and $A\in{\rm{Alg}}\mathcal{L}$. Then
\begin{itemize}
  \item [(1)] $A$ is the sum of a scalar and an idempotent if and only if
  $$
  [A,[A,[A,T]]]=[A,T] \ {\rm{for \ every}} \ T\in{\rm{Alg}}\mathcal{L}.
  $$
  \item [(2)] $A$ is the sum of a scalar and an idempotent whose range belongs to $\mathcal{L}$ if and only if
  $$
  [A,[A,T]]=[A,T] \ {\rm{for \ every}} \ T\in{\rm{Alg}}\mathcal{L}.
  $$
\end{itemize}
\end{proposition}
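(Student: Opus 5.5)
The easy implications I would check by direct computation. The converses I would reduce, by choosing special test operators $T$, to an algebraic identity for $A$. The model is the standard argument for nest algebras used by Marcoux and Sourour.

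For the easy directions, let $A=\lambda+E$ with $E^2=E$. Since $[A,T]=[E,T]$, it suffices to treat $A=E$. A direct expansion using $E^2=E$ gives
\[
[E,[E,T]]=ET-2ETE+TE,\qquad [E,[E,[E,T]]]=ET-TE,
\]
which is (1). If moreover the range of $E$ lies in $\mathcal{L}$, then $E^{\perp}TE=0$ for all $T\in{\rm{Alg}}\mathcal{L}$, so $ETE=TE$. Then $[E,[E,T]]=ET-TE=[E,T]$, which is (2).

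For the converse of (1), I would use the finite rank operators in ${\rm{Alg}}\mathcal{L}$. In this setting the rank one operators $x\otimes y$ with $x\in L$ and $y\in (L_{-})^{\perp}$ for $L\in\mathcal{J}(\mathcal{L})$ lie in ${\rm{Alg}}\mathcal{L}$, and their span is weakly dense, since a Kadison--Singer lattice is reflexive and, because $\mathcal{L}$ is commutative-like enough near the nest, Longstaff's criterion applies. The identity $[A,[A,[A,T]]]=[A,T]$ can be written as $\delta^{3}=\delta$ for the inner derivation $\delta={\rm ad}A$. I would plug in $T=x\otimes y$, which gives
\[
\sum_{k}\binom{3}{k}(-1)^k A^{3-k}x\otimes (A^{*})^{k}y = Ax\otimes y-x\otimes A^*y .
\]
Using enough pairs $(x,y)$, I would show that $(A-\mu_1)(A-\mu_2)$ annihilates a total set, for scalars $\mu_1,\mu_2$ with $\mu_1-\mu_2\in\{0,\pm1\}$. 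The point is that the eigenvalues of ${\rm ad}A$ lie in $\{0,\pm1\}$, so the spectrum of $A$ consists of at most two points at distance $1$, and $A$ satisfies $(A-\mu)(A-\mu-1)=0$. That gives $A-\mu$ idempotent. The nilpotent case $(A-\mu)^2=0$ must be excluded. If $N=A-\mu$ with $N^2=0$, then $\delta^3(T)=3NTN\cdot(\ldots)$ becomes a combination like $-3NTN+\ldots$ forcing $NT-TN=3(\text{terms})$. Testing with rank one $T$ should give $N=0$.

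For the converse of (2), I would use $\delta^2=\delta$, which implies $\delta^3=\delta$, so part (1) applies and $A=\lambda+E$. Then $[E,[E,T]]=[E,T]$ gives $ET-2ETE+TE=ET-TE$, that is $ETE=TE$, so $E^{\perp}TE=0$ for all $T\in{\rm{Alg}}\mathcal{L}$. Taking $T=I$ is trivial, so I would use the rank one operators instead. The range $R$ of $E$ is then invariant under every $T\in{\rm{Alg}}\mathcal{L}$, so $R\in{\rm Lat}\,{\rm{Alg}}\mathcal{L}=\mathcal{L}$ by reflexivity.

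The main obstacle is the first converse: making the "eigenvalue of ${\rm ad}A$" heuristic rigorous in infinite dimensions. This requires showing that the rank one operators of ${\rm{Alg}}\mathcal{L}$ separate enough to force a quadratic polynomial identity on $A$. I would do this by taking $x$ and $y$ ranging over $L$ and $(L_-)^\perp$ with $L=I$ and $L$ minimal. The relevant facts are $\wedge Q_-=P_\xi$, and that $I_-^{\mathcal{N}}\vee P_\xi<I$ gives a nonzero $(I_-)^\perp$. I would then compare coefficients using linear independence of $x,Ax,A^2x,A^3x$.
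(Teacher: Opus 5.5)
Your easy directions and your converse of (2) are correct and match the paper. From $\delta^2=\delta$ you get $\delta^3=\delta$, so $A=\lambda+E$ by (1); then $ETE=TE$ for all $T$, and reflexivity finishes (no rank-one operators are needed there).

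The gap is in the converse of (1). The step ``the eigenvalues of $\mathrm{ad}\,A$ lie in $\{0,\pm1\}$, so $\sigma(A)$ is at most two points at distance $1$'' is not valid in a non-self-adjoint algebra. To turn a relation for $\mathrm{ad}\,A$ into a relation between points of $\sigma(A)$, you need elements of the algebra linking the corresponding spectral parts of $A$ and $A^*$, and $\mathrm{Alg}\,\mathcal{L}$ lacks most of them. For example, in the setting $I_-^{\mathcal{N}}\vee P_\xi=I$ of the paper's last section, $A=I\oplus(-1)\in\mathcal{T}(\mathcal{N}_1)\oplus\mathbb{C}$ is central. So $\delta_A=0$ satisfies $\delta_A^3=\delta_A$, yet $A$ is not a scalar plus an idempotent.

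Here is what your rank-one tests actually give. The only $x\otimes f\in\mathrm{Alg}\,\mathcal{L}$ with $x$ arbitrary are those with $f\in(I_-^{\mathcal{N}}\vee P_\xi)^\perp$, and for them the hypothesis reads
\[
\langle z,f\rangle(A^3-A)x-3\langle Az,f\rangle A^2x+3\langle A^2z,f\rangle Ax-\langle(A^3-A)z,f\rangle x=0\qquad(x,z\in\mathcal{H}).
\]
If some such $f$ has $A^*f\notin\mathbb{C}f$, choosing $\langle z,f\rangle=0$ and $\langle z,A^*f\rangle=1$ gives a monic quadratic $q(A)=0$. After translating so that $A^2=cI$, the hypothesis becomes $(4c-1)[A,T]=0$. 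So either $A+\tfrac12 I$ is idempotent, or $A$ is central and hence scalar. This is the paper's Case b, and it is essentially all your outline covers. Your nilpotent subcase is simpler than you describe: if $N^2=0$, every term of $\delta_N^3(T)$ contains $N^2$, so $\delta_N=\delta_N^3=0$ and $N$ is central, hence $0$.

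The missing case, the paper's Case a, is when $A^*$ acts as a scalar on $(I_-^{\mathcal{N}}\vee P_\xi)^\perp$. After translation $A^*f=0$ there, and the identity above collapses to $A^3=A$. These test operators cannot give more: for any $A$ with $A^3=A$ and $A^*(I_-^{\mathcal{N}}\vee P_\xi)^\perp=0$ one has $\delta^3(x\otimes f)=A^3x\otimes f=Ax\otimes f=\delta(x\otimes f)$. So $\sigma(A)=\{-1,0,1\}$ remains possible, and local dependence of $x,Ax,A^2x,A^3x$ only ever yields degree three.

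Excluding that $1$ and $-1$ both lie in $\sigma(A)$ is the real content of the proposition. It needs structural information about which operators lie in $\mathrm{Alg}\,\mathcal{L}$, and the paper obtains it from the decomposition $\mathcal{H}=I_-^{\mathcal{N}}\oplus(I_-^{\mathcal{N}})^\perp$:
\begin{enumerate}
\item The compression $A_{11}$ inherits $\delta^3=\delta$ for $\mathrm{ad}\,A_{11}$ on the whole nest algebra $\mathcal{T}(\mathcal{N}_1)$. Marcoux--Sourour then gives $A_{11}=X$ or $X-1$ with $X$ idempotent.
\item The inclusion $\mathrm{Ran}\,A\subseteq I_-^{\mathcal{N}}\vee P_\xi$ forces $A_{22}=\xi_2\otimes f_0$, with $A_{22}=0$ or $\langle\xi_2,f_0\rangle=\pm1$.
\item The identity $ATA^2=A^2TA$ is tested against specific non-rank-one elements to eliminate the mixed-sign combinations one at a time. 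An example is $T$ with $T_{11}=I_-^{\mathcal{N}}$, $T_{12}=-\xi_1\otimes\xi_2/\|\xi_2\|^2$, $T_{22}=0$, which kills $\xi$.
\end{enumerate}
Your proposal has nothing playing this role, so as written the converse of (1) is not established.
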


\begin{proof}
(1) The sufficiency is obvious, so we only prove the necessity. Assume now that
$$
  [A,[A,[A,T]]]=[A,T] \ {\rm{for \ every}} \ T\in{\rm{Alg}}\mathcal{L},
$$
then
\begin{equation}\label{1}
  (A^{3}-A)T-3A^{2}TA+3ATA^{2}-T(A^{3}-A)=0 \ {\rm{for \ every}} \ T\in{\rm{Alg}}\mathcal{L}.
\end{equation}

Since $I_{-}^{\mathcal{N}}\vee P_{\xi}<I$, we have $I\in\mathcal{J}(\mathcal{L})$.

Let $P\in\mathcal{J}(\mathcal{L})$. Then $x\otimes f\in{\rm{Alg}}\mathcal{L}$ for all $x\in P$ and $f\in P_{-}^{\perp}$. Upon taking $T=x\otimes f$ in Eq.(\ref{1}) and then applying this equation to a vector $z\in\mathcal{H}$, we have
\begin{equation}\label{1'}
  \langle z,f\rangle(A^{3}-A)x-3\langle Az,f\rangle A^{2}x+3\langle A^{2}z,f\rangle Ax-\langle (A^{3}-A)z,f\rangle x=0.
\end{equation}
We now consider two cases:

Case a. There is some $P\in\mathcal{J}(\mathcal{L})$ such that $A^{\ast}f$ and $f$ are linearly dependent for any $f\in P_{-}^{\perp}$. Then there is some scalar $\lambda_{P}\in\mathbb{C}$ such that $A^{\ast}f=\lambda_{P}f$ for all $f\in P_{-}^{\perp}$. 
Without loss of generality, we can replace $A-\bar{\lambda}_{P} I$ with $A$, then we have $A^{\ast}f=0$ for all $f\in P_{-}^{\perp}$ and $(I-P_{-})A=0$. Since $P\leq I$, then $P_{-}\leq I_{-}$ and $P_{-}^{\perp}\geq I_{-}^{\perp}$. Thus $A^{\ast}f=0$ for all $f\in I_{-}^{\perp}$, by Eq.(\ref{1'}), we have $(A^{3}-A)x=0$ for all $x\in\mathcal{H}$ and $A^{3}=A$. 
On the other hand, from Eq. (\ref{1}) we obtain the identity
\begin{equation}\label{A^3=A}
 ATA^{2}=A^{2}TA, \ {\rm{for \ every}} \ T\in{\rm{Alg}}\mathcal{L}.
\end{equation}
Using the decomposition $\mathcal{H}=(I_{-}^{\mathcal{N}})\oplus (I_{-}^{\mathcal{N}})^{\perp}$ and writing $\xi=\xi_{1}\oplus \xi_{2}$, we represent the operators as
$$
A=\left(\begin{array}{cc}
A_{11}&A_{12}\\
0&A_{22}
\end{array}\right) \ {\rm{and}} \
T=\left(\begin{array}{cc}
T_{11}&T_{12}\\
0&T_{22}
\end{array}\right).
$$

Since $A^{\ast}f=0$ for all $f\in I_{-}^{\perp}$, we have $A_{22}=\xi_{2}\otimes f$ for some $f_{0}\in(I_{-}^{\mathcal{N}})^{\perp}$. From $A^{3}=A$, it follows that $A_{22}=0$, $A_{22}$ is idempotent when $\langle \xi_{2},f_{0}\rangle=1$, and $A_{22}$ is a negative idempotent when $\langle \xi_{2},f_{0}\rangle=-1$.

By \cite{Marcoux-Sourour}, there is an idempotent $X$ such that $A_{11}=X+\lambda (I_{-}^{\mathcal{N}})^{\perp}$. Since $A_{11}^{3}=A_{11}$, we obtain either $A_{11}=X$ or $A_{11}=X-1$ with $X$ an idempotent.

If $A_{11},A_{22}$ are idempotents, substituting the block forms into Eq.(\ref{A^3=A}) yields
\begin{align*}
A_{11}T_{11}A_{11}A_{12}+A_{11}T_{11}A_{12}A_{22}-A_{11}T_{11}A_{12} \\
=A_{11}A_{12}T_{22}A_{22}+A_{12}A_{22}T_{22}A_{22}-A_{12}T_{22}A_{22}.
\end{align*}
Now, take $T_{11}=I_{-}^{\mathcal{N}}$, $T_{22}=0$ and $T_{12}=\frac{-\xi_{1}\otimes \xi_{2}}{\|\xi_{2}\|^{2}}$ in the above equation. This implies $A_{11}A_{12}A_{22}=0$. Together with the condition $A^{3}=A$ and the fact that $A_{11},A_{22}$ are idempotent, which implies $A_{11}A_{12}+A_{12}A_{22}=A_{12}$, we conclude that $A^{2}=A$.

If $A_{11}=X-1$ with $X$ and $A_{22}=\xi_{2}\otimes f_{0}$ are idempotents, we first assume $A_{22}=0$. From $A^{3}=A$, we have $XA_{12}=0$, and hence $A+1$ is an idempotent. Now, we assume that $A_{22}=\xi_{2}\otimes f_{0}$ with $\langle \xi_{2},f_{0}\rangle=1$. Then $A^{3}=A$ gives
$$
XA_{12}=XA_{12}A_{22}.
$$
Substituting this into Eq.(\ref{A^3=A}) and together with the fact that $A,T\in{\rm{Alg}\mathcal{L}}$, we deduce $X=1$ and $A_{12}=A_{12}A_{22}$. Consequently, $A$ itself is an idempotent.

If $A_{11}$ is an idempotent and $A_{22}=\xi_{2}\otimes f_{0}$ with $\langle \xi_{2},f_{0}\rangle=-1$, then form $A^{3}=A$, we obtain
$$
A_{11}A_{12}-A_{12}=(A_{11}-1)\xi_{1}\otimes f_{0}.
$$
Substituting this into Eq.(\ref{A^3=A}) yields $A_{11}=0$, and consequently $A_{12}=\xi_{1}\otimes f_{0}$. Thus $A_{12}(A_{22}+1)=0$. Therefore, $A+1$ is idempotent.

If $A_{11}=X-1$ and $A_{22}=\xi_{2}\otimes f_{0}$ with $X$ is an idempotent and $\langle \xi_{2},f_{0}\rangle=-1$. A direct calculation yields $A_{12}A_{22}=-X\xi_{1}\otimes f_{0}$. From $A^{3}=A$, we have $XA_{12}=X\xi_{1}\otimes f_{0}$. It is then clear that $XA_{12}+A_{12}A_{22}=0$, and consequently $A+1$ is idempotent.

Case b. For any $P\in\mathcal{J}(\mathcal{L})$, there is some $f\in P_{-}^{\perp}$ such that $A^{\ast}f$ and $f$ are linearly independent. Since $I\in\mathcal{J}(\mathcal{L})$. Then there is some $f\in I_{-}^{\perp}$ such that $A^{\ast}f$ and $f$ are linearly independent, then we can choose some vector $z\in\mathcal{H}$ such that $\langle z,f\rangle=0$ and $\langle z,A^{\ast}f\rangle=1$. It follows from Eq.(\ref{1'}) that there is a monic quadratic polynomial $q$ such that $q(A)=0$. Now translate $A$ so that $A^{2}=cI$ for some scalar $c\in\mathbb{C}$, by Eq.(\ref{1}), then we have
$$
(4c-1)AT=(4c-1)TA, \ \forall \ T\in{\rm{Alg}}\mathcal{L}.
$$
If $4c-1=0$, then $A^{2}=\frac{1}{4}I$ and $A+\frac{1}{2}I$ is an idempotent. If $4c-1\neq 0$, then
$$
AT=TA, \ \forall \ T\in{\rm{Alg}}\mathcal{L}.
$$
Thus $A$ is an operator in center of ${\rm{Alg}}\mathcal{L}$, and $A$ is a scalar.

(2) Let $P\in{\rm{Alg}}\mathcal{L}$ be an idempotent operator whose range is in $\mathcal{L}$. Since the range of $P$ is a closed subspace (hence a Hilbert space), by the decomposition $\mathcal{H}={\rm{Ran}}(P)\oplus {\rm{Ran}}(P)^{\perp}$, we have
$$
P=\left(\begin{array}{cc}
1&P_{12}\\
0&0
\end{array}\right).
$$
Let $A$ be the sum of a scalar and an idempotent whose range is in $\mathcal{L}$. Then, by a straightforward calculation, we obtain $[A,[A,T]]=[A,T]$ for any $T\in{\rm{Alg}}\mathcal{L}$.

Now, we assume that $[A,[A,T]]=[A,T]$ for any $T\in{\rm{Alg}}\mathcal{L}$. Then there are a scalar $\lambda\in\mathbb{C}$ and an idempotent $E\in{\rm{Alg}}\mathcal{L}$ such that $A=\lambda I+E$. By direct calculation, we have $[E,[E,T]]=[E,T]$ for all $T\in{\rm{Alg}}\mathcal{L}$, which implies $TE=ETE$ for all $T\in{\rm{Alg}}\mathcal{L}$. Thus ${\rm{Ran}}(E)\in{\rm{LatAlg}}\mathcal{L}=\mathcal{L}$, since $\mathcal{L}$ is a reflexive lattice.

\end{proof}
\begin{remark}\label{CLI}
Define
$$
\mathcal{J}(\mathcal{L},E)=\{A\in{\rm{Alg}}\mathcal{L}:AE=0 \ and \ A^{\ast}E^{\perp}=0\}.
$$
It is not difficult to verify that $\mathcal{J}(\mathcal{L},E)$ is an ideal and $\mathcal{J}(\mathcal{L},0)$, $\mathcal{J}(\mathcal{L},I)$ are trivial. For any $A,B\in\mathcal{J}(\mathcal{L},E)$, we have $AB=0$, and hence $\mathcal{J}(\mathcal{L},E)$ and $\mathbb{C}I+\mathcal{J}(\mathcal{L},E)$ are commutative Lie ideals.

For any $A\in\mathcal{J}(\mathcal{L},N)$ with $N\neq I$, we have $AN=0$ and $N^{\perp}A=0$. It is clear that $(N\vee P_{\xi})^{\perp}A=0$. Since $N\neq I$, then $A\xi=0$ and $A(N\vee P_{\xi})=0$. Thus $\mathcal{J}(\mathcal{L},N)\subset \mathcal{J}(\mathcal{L},N\vee P_{\xi})$.

For any $N\leq I_{-}^{\mathcal{N}}$, if we take $f\in(I_{-}^{\mathcal{N}}\vee P_{\xi})^{\perp}$, then by direct computation we have $\xi\otimes f\in\mathcal{J}(\mathcal{L},N\vee P_{\xi})$ but $\xi\otimes f\notin\mathcal{J}(\mathcal{L},N)$. In fact, we can prove that
$$
\mathcal{J}(\mathcal{L},N\vee P_{\xi})=\mathcal{J}(\mathcal{L},N)+\{\xi\otimes f:f\in(I_{-}^{\mathcal{N}}\vee P_{\xi})^{\perp}\}.
$$
For any $T\in \mathcal{J}(\mathcal{L},N\vee P_{\xi})\setminus \mathcal{J}(\mathcal{L},N)$, since $A(I_{-}^{\mathcal{N}}\ominus N)\subset I_{-}^{\mathcal{N}}\wedge (N\vee P_{\xi})=N$, then by the decomposition $\mathcal{H}=N\oplus (I_{-}^{\mathcal{N}}\ominus N)\oplus (I_{-}^{\mathcal{N}})^{\perp}$, we can assume
$$
T=\left(\begin{array}{ccc}
0&T_{12}&T_{13}\\
0&0&T_{23}\\
0&0&T_{33}
\end{array}\right),
$$
where $T_{23}\neq0$ or $T_{33}\neq0$. For convenience, let $\xi=(\xi_{1},\xi_{2},\xi_{3})^{T}$. Since $T(I_{-}^{\mathcal{N}})^{\perp}\subset N\vee P_{\xi}$ and $T\xi=0$, there exists a nonzero vector $f\in(I_{-}^{\mathcal{N}}\vee P_{\xi})^{\perp}$ such that
$$
\left(\begin{array}{cc}
0&T_{23}\\
0&T_{33}
\end{array}\right)=
\left(\begin{array}{c}
\xi_{2}\\
\xi_{3}
\end{array}\right)\otimes f.
$$
Thus $T-\xi\otimes f\in\mathcal{J}(\mathcal{L},N)$, and hence
$$
\mathcal{J}(\mathcal{L},N\vee P_{\xi})=\mathcal{J}(\mathcal{L},N)+\{\xi\otimes f:f\in(I_{-}^{\mathcal{N}}\vee P_{\xi})^{\perp}\}.
$$

In particular, if $N=0$, then $\mathcal{J}(\mathcal{L},P_{\xi})=\{\xi\otimes f:f\in(I_{-}^{\mathcal{N}}\vee P_{\xi})^{\perp}\}$. Indeed, for any $A\in\mathcal{J}(\mathcal{L},P_{\xi})$, we have $A\mathcal{H}\subset \mathbf{C}\xi$, hence, $A$ is a rank-one operator. Since $A\xi=0$, then there exists $f\in(I_{-}^{\mathcal{N}}\vee P_{\xi})^{\perp}$ such that $A=\xi\otimes f$. The above discussion implies that $\mathcal{J}(\mathcal{L},P_{\xi})\subset \mathcal{J}(\mathcal{L},N\vee P_{\xi})$ for all $N\in\mathcal{N}^{0}$.

\end{remark}

\begin{proposition}\label{ATA=0}
Let $A\in{\rm{Alg}}\mathcal{L}$ be a nonzero element. If $ATA=0$ for any $T\in{\rm{Alg}}\mathcal{L}$, then there is some $P\in\mathcal{L}$ such that $AP=0$ and $A^{\ast}P^{\perp}=0$.
\end{proposition}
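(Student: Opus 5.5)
We want: if $A\neq 0$ and $A\,\mathrm{Alg}\mathcal{L}\,A=0$, find $P\in\mathcal{L}$ with $AP=0$ and $P^{\perp}A=0$ (equivalently $A^{\ast}P^{\perp}=0$). The natural candidate is
$$
P=\bigwedge\{L\in\mathcal{L}: \overline{\mathrm{Ran}(A)}\subseteq L\},
$$
the smallest member of the lattice containing the range of $A$. Because $\mathcal{L}$ is complete, $P\in\mathcal{L}$, and $P^{\perp}A=0$ holds by construction. The work is to show $AP=0$, i.e.\ that $A$ kills the subspace $P$.

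\textbf{Step 1: rank-one operators give the key identity.} For each $L\in\mathcal{J}(\mathcal{L})$, every rank-one $x\otimes f$ with $x\in L$ and $f\in L_{-}^{\perp}$ lies in $\mathrm{Alg}\mathcal{L}$. Substituting $T=x\otimes f$ into $ATA=0$ and applying the result to $z$ gives
$$
\langle Az,f\rangle\,Ax=0 .
$$
Hence, for each such $L$, either $AL=0$ or $L_{-}^{\perp}A=0$, i.e.\ $\mathrm{Ran}(A)\subseteq L_{-}$.

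\textbf{Step 2: the lattice is exhausted by the subspaces $L$ and $L_{-}$.} In $\mathcal{L}=\{0,I,N,P_{\xi},N\vee P_{\xi}\}$ one computes the predecessors $L_{-}$ directly. For $N\in\mathcal{N}$, $N_{-}$ is the $\mathcal{N}$-predecessor. For $L=N\vee P_{\xi}$ one must determine whether $L_{-}$ is $N_{-}\vee P_{\xi}$ or $N$. The paper also records that $\wedge\{Q_{-}\}=P_{\xi}$. Now set
$$
L_0=\bigvee\{L\in\mathcal{J}(\mathcal{L}): AL=0\}.
$$
Then $AL_0=0$, since $A$ kills each of the generating subspaces. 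One then wants $L_0=P$, or at least $\mathrm{Ran}(A)\subseteq L_0$; taking $L_0$ itself as the required projection suffices. Since $I\in\mathcal{J}(\mathcal{L})$ and $A\neq0$, Step 1 forces $\mathrm{Ran}(A)\subseteq I_{-}$. The argument then climbs along the chain. Take $L\in\mathcal{J}(\mathcal{L})$ with $L\not\leq L_0$. Then $AL\neq0$, so Step 1 gives $\mathrm{Ran}(A)\subseteq L_{-}$. Therefore
$$
\mathrm{Ran}(A)\subseteq \bigwedge\{L_{-}: L\in\mathcal{J}(\mathcal{L}),\ L\not\leq L_0\}.
$$
The goal is to show this meet is contained in $L_0$. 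For a nest, this is the standard fact $\bigwedge\{N_{-}:N\not\le M\}=M$, which is the Marcoux--Sourour argument, and I would follow it.

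\textbf{Step 3: the main obstacle.} The lattice is not totally ordered: $P_{\xi}$ is incomparable with every nontrivial $N$. So the meet above can be strictly larger than $L_0$ in the mixed situations, for example when $L_0=N$ and $\mathrm{Ran}(A)\subseteq N\vee P_{\xi}$. I would handle this by splitting on whether $A\xi=0$.

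If $A\xi=0$, then $A$ kills $L_0\vee P_{\xi}$. Taking $P=L_0\vee P_{\xi}$ works once $\mathrm{Ran}(A)\subseteq L_0\vee P_{\xi}$, which follows from the nest meet computation applied to the $N$'s together with $\wedge\{Q_{-}\}=P_{\xi}$.

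If $A\xi\neq0$, then $P_{\xi}\not\le L_0$, so Step 1 applied to $L=P_{\xi}$ gives $\mathrm{Ran}(A)\subseteq (P_{\xi})_{-}$. Here I would use that $\xi$ is separating for $\mathcal{N}''$, hence $\xi\notin N$ for $N\neq I$. This should force $(P_{\xi})_{-}=0$ and hence $A=0$, a contradiction.

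Checking these predecessor computations for elements of the form $N\vee P_{\xi}$ is the step where I expect care is needed. If the direct case analysis becomes messy, a fallback is to use the block decomposition with respect to $I_{-}^{\mathcal{N}}\oplus(I_{-}^{\mathcal{N}})^{\perp}$, as in the proof of Proposition \ref{idem}, together with the description of $\mathcal{J}(\mathcal{L},N\vee P_{\xi})$ in Remark \ref{CLI}.
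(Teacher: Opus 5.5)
\textbf{Gap in the case $A\xi\neq0$.} Your strategy is essentially the paper's. You take the largest $L_0\in\mathcal{L}$ annihilated by $A$, use rank-one operators $x\otimes f$ with $x\in L$, $f\in L_-^{\perp}$ to get $AL=0$ or $\mathrm{Ran}(A)\subseteq L_-$, and then pass to a meet of predecessors. However, your branch $A\xi\neq0$ rests on a false computation: $(P_\xi)_-$ is not $0$. The members of $\mathcal{L}$ not containing $P_\xi$ are $0$ and every $N\in\mathcal{N}\setminus\{I\}$, precisely because $\xi$ is separating and so $\xi\notin N$. Hence $(P_\xi)_-=\bigvee\{N\in\mathcal{N}:N\neq I\}=I_-^{\mathcal{N}}\neq0$, since the nest has at least four projections. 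Step 1 with $L=P_\xi$ therefore only gives $\mathrm{Ran}(A)\subseteq I_-^{\mathcal{N}}$, which is no contradiction by itself. (Similarly, for $0<N<I$ one has $(N\vee P_\xi)_-=I_-^{\mathcal{N}}\vee P_\xi$, which is neither of your two candidates. This is harmless, because these elements do not shrink the meet beyond what $L=I$ gives.)

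The missing idea, which the paper states at the outset, is that this branch is empty. Since $P_\xi\in\mathcal{L}$ is invariant under $A$, we have $A\xi=\lambda\xi$, and taking $T=I$ gives $A^2=0$, so $\lambda=0$. Within your own framework you could also argue $A\xi\in\mathbb{C}\xi\cap I_-^{\mathcal{N}}=\{0\}$. Either way, $P_\xi\le L_0$ always, so $L_0=N_0\vee P_\xi$ with $N_0\in\mathcal{N}\setminus\{I\}$, and the ``mixed'' situation of your Step 3 never occurs.

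\textbf{What the remaining branch needs.} After this repair, your remaining branch coincides with the paper's argument, but be precise about what it requires. Using $N_-=N_-^{\mathcal{N}}\vee P_\xi$ for $0\neq N\in\mathcal{N}$, you must show $\bigwedge\{N_-^{\mathcal{N}}\vee P_\xi: N\in\mathcal{N},\ N>N_0\}\subseteq N_0\vee P_\xi$. This is the analogue of $\bigwedge\{Q_-\}=P_\xi$ above $N_0$, not that identity itself.
\begin{itemize}
\item It is immediate when $N_0$ has an immediate successor in $\mathcal{N}$.
\item For $N_0=I_-^{\mathcal{N}}$, your choice $L=I$ gives $\mathrm{Ran}(A)\subseteq I_-=I_-^{\mathcal{N}}\vee P_\xi$ directly. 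This is slightly cleaner than the paper's semiprimeness argument.
\item In the limit case $(N_0)_+^{\mathcal{N}}=N_0$, you need the decreasing meet to commute with $\vee P_\xi$. This holds because $P_\xi$ is one-dimensional and $N_0^{\perp}\xi\neq0$: if $M_i\in\mathcal{N}$ decrease to $N_0$, the $\xi$-coefficients of a vector in $\bigcap_i(M_i+\mathbb{C}\xi)$ converge. It is not automatic for closed subspaces, and the paper uses it as well.
\end{itemize}
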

\begin{proof}
Let $P_{0}=\vee\{P\in\mathcal{L}:AP=0\}$. Then $AP_{0}=0$. Since $A\neq 0$, we have $P_{0}\neq I$. It is clear that $A^{2}=0$ and $A\xi=0$, then $P_{\xi}\leq P_{0}\neq 0$ and there is $N(\neq I)\in\mathcal{N}$ such that $P_{0}=N\vee P_{\xi}$. Next, we claim that $A^{\ast}P_{0}^{\perp}=0$.

Firstly, we let $N_{+}^{\mathcal{N}}\neq N$. Then $N_{+}^{\mathcal{N}}=I$ or $P_{0}$ and $N_{+}^{\mathcal{N}}$ cannot be compared. If $N_{+}^{\mathcal{N}}=I$, then $N=I_{-}^{\mathcal{N}}$ and $P_{0}=I_{-}^{\mathcal{N}}\vee P_{\xi}<I$. Thus $A(I_{-}^{\mathcal{N}}\vee P_{\xi})=0$, since $ATA=0$ for any $T\in{\rm{Alg}}\mathcal{L}$ and $\mathcal{B}((I-I_{-}^{\mathcal{N}}\vee P_{\xi})\mathcal{H})$ is semi-prime, we have $(I-I_{-}^{\mathcal{N}}\vee P_{\xi})A(I-I_{-}^{\mathcal{N}}\vee P_{\xi})=0$, hence, we have $A^{\ast}P_{0}^{\perp}=0$. If $P_{0}$ and $N_{+}^{\mathcal{N}}$ cannot be compared, then there is some nonzero element $x\in N_{+}^{\mathcal{N}}$ such that $Ax\neq 0$, thus $A^{\ast}f=0$ for all $f\in (N_{+}^{\mathcal{N}})_{-}^{\perp}$, by calculation, we have $(N_{+}^{\mathcal{N}})_{-}=N\vee P_{\xi}=P_{0}$, thus $A^{\ast}P_{0}^{\perp}=0$.

Secondly, we let $N_{+}^{\mathcal{N}}=N$. Then there are $N<N_{i}\in\mathcal{N}$ such that $N_{i}>N_{i+1}$ and $\wedge\{N_{i}\}_{i\in\mathbf{N}}=N$. Similarly, we have $A^{\ast}(N_{i})_{-}^{\perp}=0$, and by calculation, we have $\vee[(N_{i})_{-}^{\perp}]=(\wedge(N_{i})_{-})^{\perp}=[\wedge(N_{i})_{-}^{\mathcal{N}}\vee P_{\xi}]^{\perp}=(N\vee P_{\xi})^{\perp}=P_{0}^{\perp}$. Thus $A^{\ast}P_{0}^{\perp}=0$.

\end{proof}
\begin{proposition}\label{[A,[A,T]]=0}
Let $A\in{\rm{Alg}}\mathcal{L}$. Then $A\in\mathbb{C}I+\mathcal{J}(\mathcal{L},E)$ for some $E\in\mathcal{L}$ if and only if
$$
  [A,[A,T]]=0 \ {\rm{for \ every}} \ T\in{\rm{Alg}}\mathcal{L}.
$$

\end{proposition}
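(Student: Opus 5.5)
The sufficiency direction is a direct computation. Suppose $A=\lambda I+B$ with $B\in\mathcal{J}(\mathcal{L},E)$. Then $[A,T]=[B,T]$, so $[A,[A,T]]=[B,[B,T]]=B^{2}T-2BTB+TB^{2}$. By Remark \ref{CLI}, any two elements of $\mathcal{J}(\mathcal{L},E)$ have zero product, so $B^{2}=0$. It remains to check $BTB=0$ for all $T\in{\rm{Alg}}\mathcal{L}$. Since $E\in\mathcal{L}$ is invariant under $T$, we have $TB=ETB+E^{\perp}TB$. The condition $A^{\ast}E^{\perp}=0$ means $E^{\perp}B=0$, so $B=EB$. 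Hence $TB=TEB=ETEB$, and therefore $BTB=BETEB=0$ because $BE=0$.

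For necessity, assume $[A,[A,T]]=0$ for every $T$, which expands to
\begin{equation*}
A^{2}T-2ATA+TA^{2}=0 \quad \text{for all } T\in{\rm{Alg}}\mathcal{L}.
\end{equation*}
I would follow the scheme of Proposition \ref{idem}.

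\textbf{Step 1: reduce to $A^{2}=0$.} Since $I\in\mathcal{J}(\mathcal{L})$, the rank-one operators $x\otimes f$ with $x\in\mathcal{H}$ and $f\in I_{-}^{\perp}$ lie in ${\rm{Alg}}\mathcal{L}$. Substituting $T=x\otimes f$ and applying the identity to a vector $z$ gives
\begin{equation*}
\langle z,f\rangle A^{2}x-2\langle Az,f\rangle Ax+\langle A^{2}z,f\rangle x=0 .
\end{equation*}
There are two cases.
\begin{itemize}
\item If some $f\in I_{-}^{\perp}$ has $A^{\ast}f$ and $f$ linearly independent, choose $z$ with $\langle z,f\rangle=0$ and $\langle Az,f\rangle=1$. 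The identity then gives $2Ax=\langle A^{2}z,f\rangle x$ for all $x$, so $A$ is a scalar, and we may take $E=0$.
\item Otherwise $A^{\ast}f=\bar\lambda f$ on $I_{-}^{\perp}$. Replace $A$ by $A-\lambda I$, which does not change the hypothesis. Then taking $z$ with $\langle z,f\rangle=1$ yields $A^{2}x=0$ for all $x$, so $A^{2}=0$.
\end{itemize}

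\textbf{Step 2: apply Proposition \ref{ATA=0}.} With $A^{2}=0$, the identity reduces to $ATA=0$ for all $T$. If $A=0$ we are done. Otherwise Proposition \ref{ATA=0} gives $P\in\mathcal{L}$ with $AP=0$ and $A^{\ast}P^{\perp}=0$, that is, $A\in\mathcal{J}(\mathcal{L},P)$. Undoing the translation gives $A\in\mathbb{C}I+\mathcal{J}(\mathcal{L},P)$.

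The main obstacle is the second case of Step 1. There one must check that the eigenvalue relation holds on all of $I_{-}^{\perp}$ with a single scalar $\lambda$. This follows since $f\mapsto A^{\ast}f$ preserves lines on a subspace of dimension at least one, and a standard argument shows the eigenvalue is common whenever that subspace has dimension at least two; in dimension one it is automatic. One must also confirm that translating by $\lambda I$ keeps $A$ in ${\rm{Alg}}\mathcal{L}$, which is clear. Everything after that reduction is supplied by Proposition \ref{ATA=0} and the ideal properties in Remark \ref{CLI}.
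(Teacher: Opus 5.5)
Your proposal is correct and follows essentially the same route as the paper. You substitute $T=x\otimes f$ with $f\in I_{-}^{\perp}$ and split according to whether some $f$ has $A^{\ast}f$ independent of $f$ (forcing $A$ to be scalar) or $A^{\ast}$ acts as a single scalar on $I_{-}^{\perp}$ (forcing, after translation, $A^{2}=0$ and hence $ATA=0$), and then you invoke Proposition \ref{ATA=0}; the only differences are that you run the case split directly with $P=I$ rather than over all $P\in\mathcal{J}(\mathcal{L})$, and you write out the easy direction and the common-eigenvalue argument, which the paper leaves implicit.
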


\begin{proof}
(1) The sufficiency is obvious, so we only prove the necessity. Assume now that
$$
  [A,[A,T]]=0 \ {\rm{for \ every}} \ T\in{\rm{Alg}}\mathcal{L},
$$
then for any $T\in{\rm{Alg}}\mathcal{L}$, we have
\begin{equation}\label{3.1}
 A^{2}T-2ATA+TA^{2}=0.
\end{equation}

Let $P\in\mathcal{J}(\mathcal{L})$. Then $x\otimes f\in{\rm{Alg}}\mathcal{L}$ for all $x\in P$ and $f\in P_{-}^{\perp}$. Upon taking $T=x\otimes f$ in Eq.(\ref{3.1}) and then applying this equation to a vector $z\in\mathcal{H}$, we have
\begin{equation}\label{3.1'}
  \langle z,f\rangle A^{2}x-2\langle Az,f\rangle Ax+\langle A^{2}z,f\rangle x=0.
\end{equation}
We now consider two cases:

Case a. There is some $P\in\mathcal{J}(\mathcal{L})$ such that $A^{\ast}f$ and $f$ are linearly dependent for any $f\in P_{-}^{\perp}$. 
Then there is some $\lambda_{P}\in\mathbf{C}$ such that $A^{\ast}f=\lambda_{P} f$ for any vector $f\in P_{-}^{\perp}$. Without loss of generality, we can replace $A-\bar{\lambda}_{P} I$ with $A$, then we have $A^{\ast}f=0$ for all $f\in P_{-}^{\perp}$ and $(I-P_{-})A=0$. Since $P\leq I$, then $P_{-}\leq I_{-}$ and $P_{-}^{\perp}\geq I_{-}^{\perp}$. Thus $A^{\ast}f=0$ for all $f\in I_{-}^{\perp}$, by Eq.(\ref{3.1'}), we have $A^{2}x=0$ for all $x\in\mathcal{H}$ and $A^{2}=0$. Then by Eq.(\ref{3.1}), we have $ATA=0$ for any $T\in{\rm{Alg}}\mathcal{L}$. Hence $A\in\mathbb{C}I+\mathcal{J}(\mathcal{L},E)$ for some $E\in\mathcal{L}$ by Proposition \ref{ATA=0}.

Case b. For any $P\in\mathcal{J}(\mathcal{L})$, there is some $f\in P_{-}^{\perp}$ such that $A^{\ast}f$ and $f$ are linearly independent. Since $I\in\mathcal{J}(\mathcal{L})$. Then there is some $f\in I_{-}^{\perp}$ such that $A^{\ast}f$ and $f$ are linearly independent, then we can choose some vector $z\in\mathcal{H}$ such that $\langle z,f\rangle=0$ and $\langle z,A^{\ast}f\rangle=1$. It follows from Eq.(\ref{3.1'}) that there is some $\lambda\in\mathbb{C}$ such that $Ax=\lambda x$ for all $x\in\mathcal{H}$, hence $A=\lambda I$ and the proof is complete.

\end{proof}

\begin{theorem}\label{J(L,E)}
Let $\mathcal{L}$ be the  subspace lattice given in section 1. Then the following hold:
\begin{itemize}
  \item [(1)] Let $N\in\mathcal{N}^{0}$ and $E=N\vee P_{\xi}$. Then $\mathbb{C}I+\mathcal{J}(\mathcal{L},E)$ is a maximal commutative Lie ideal in ${\rm{Alg}}\mathcal{L}$.
  \item [(2)] For any non-trivial commutative Lie ideal $\mathcal{J}_{0} \subset {\rm{Alg}}\mathcal{L}$, there exists $N \in \mathcal{N}^{0}$ such that $\mathcal{J}_{0} \subset \mathbb{C}I + \mathcal{J}(\mathcal{L},N\vee P_{\xi})$. Moreover, if $\mathcal{J}_{0}$ is maximal, then equality holds and $N$ is uniquely determined.
\end{itemize}
\end{theorem}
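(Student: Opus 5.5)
The plan is to use Proposition \ref{[A,[A,T]]=0} as the characterization of elements of commutative Lie ideals. Together with the containment relations of Remark \ref{CLI}, it reduces both parts to an analysis of which sets $\mathcal{J}(\mathcal{L},E)$ occur and how they are ordered.

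For (1), Remark \ref{CLI} already gives that $\mathbb{C}I+\mathcal{J}(\mathcal{L},E)$ is a commutative Lie ideal. It is a Lie ideal because $\mathcal{J}(\mathcal{L},E)$ is an associative ideal, and it is commutative because $AB=0$ for $A,B\in\mathcal{J}(\mathcal{L},E)$. For maximality, suppose $\mathcal{J}_{0}\supseteq\mathbb{C}I+\mathcal{J}(\mathcal{L},E)$ is a commutative Lie ideal and take $A\in\mathcal{J}_{0}$. Since $[A,T]\in\mathcal{J}_{0}$ and $\mathcal{J}_{0}$ is commutative, we get $[A,[A,T]]=0$ for all $T$. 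By Proposition \ref{[A,[A,T]]=0}, $A=\lambda I+A_{0}$ with $A_{0}\in\mathcal{J}(\mathcal{L},E')$ for some $E'\in\mathcal{L}$; by Remark \ref{CLI} we may take $E'=N'\vee P_{\xi}$. The element $A_{0}$ must commute with all of $\mathcal{J}(\mathcal{L},E)$, so $A_{0}B=BA_{0}$ for all $B\in\mathcal{J}(\mathcal{L},E)$. We test against rank-one operators $x\otimes f$ with $x\in E$ and $f\in E^{\perp}$ that lie in $\mathcal{J}(\mathcal{L},E)$. Among these are $\xi\otimes f$ with $f\in(I_{-}^{\mathcal{N}}\vee P_{\xi})^{\perp}$, and $x\otimes f$ with $x\in N$, $f\in (N\vee P_\xi)^{\perp}$, subject to the constraint $x\otimes f\in{\rm Alg}\mathcal{L}$. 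The commutation relation $A_{0}x\otimes f=x\otimes A_{0}^{\ast}f$ forces $A_{0}x$ and $x$ to be proportional, and likewise $A_{0}^{\ast}f$ and $f$. Since $A_{0}$ is nilpotent ($A_0^2=0$), this gives $A_{0}x=0$ for $x$ ranging over a set spanning $E$, and $A_{0}^{\ast}f=0$ for $f$ ranging over a set spanning $E^{\perp}$. Hence $A_{0}\in\mathcal{J}(\mathcal{L},E)$, and maximality follows.

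The main obstacle is this last spanning step. We must check that enough rank-one operators of $\mathcal{J}(\mathcal{L},E)$ exist to span $E$ and $E^{\perp}$, which requires exploiting $N\in\mathcal{N}^{0}$ (neither $0$ nor $I$) and the structure of ${\rm Alg}\mathcal{L}$ near $P_{\xi}$. If the ranges of the available $x$ do not fill $E$, I would first handle the vectors in $N$ via rank-ones $x\otimes f$ with $x\in N$ and $f\in N_{+}^\perp$-type functionals. I would then handle the $\xi$-direction via $\xi\otimes f$. Finally I would use $A_{0}\xi=0$, which holds because $P_\xi\le E'$ as in Proposition \ref{ATA=0}.

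For (2), let $\mathcal{J}_{0}$ be a non-trivial commutative Lie ideal. As above, every $A\in\mathcal{J}_{0}$ lies in $\mathbb{C}I+\mathcal{J}(\mathcal{L},N_{A}\vee P_{\xi})$ for some $N_{A}\in\mathcal{N}^{0}$; here $N_A\ne I$ is forced, and $N_A=0$ is absorbed because $\mathcal{J}(\mathcal{L},P_\xi)\subset\mathcal{J}(\mathcal{L},N\vee P_\xi)$ for every $N$. We must show that a single $N$ works for all $A$. Take a nonscalar $A\in\mathcal{J}_{0}$, which exists by non-triviality, and let $N$ be minimal with $A_{0}\in\mathcal{J}(\mathcal{L},N\vee P_{\xi})$; this is well defined because the sets $\{N: A_0 (N\vee P_\xi)=0\}$ and $\{N: A_0^*(N\vee P_\xi)^\perp=0\}$ are closed under the appropriate infima and suprema in the nest. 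For any other $B\in\mathcal{J}_{0}$, the conditions $[A,B]=0$ and $[A,[B,T]]=0$ for all $T$ give, exactly as in the maximality argument, $B_{0}\in\mathcal{J}(\mathcal{L},N\vee P_{\xi})$. The cleaner route here is to apply the commutation argument to $[B,T]$ with $T$ chosen so that $[A_0,T]$ is a rank-one element. If $\mathcal{J}_{0}$ is maximal, equality follows from (1), since $\mathbb{C}I+\mathcal{J}(\mathcal{L},N\vee P_{\xi})$ is itself commutative. Uniqueness holds because distinct $N\ne N'$ give distinct sets $\mathcal{J}(\mathcal{L},N\vee P_{\xi})$: for $N<N'$, the operator $x\otimes f$ with $x\in N'\ominus N$ and $f$ orthogonal to $N'\vee P_\xi$ lies in one set but not the other.
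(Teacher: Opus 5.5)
Your part (1), the uniqueness argument, and the equality in the maximal case are fine. In (1) you differ from the paper only in killing the eigenvalue via $A_0^2=0$ instead of matching the two scalars $\lambda=\mu$. The ``spanning obstacle'' you flag is not real. Every $x\otimes f$ with $x\in N$, $f\in(N\vee P_\xi)^{\perp}$ lies in ${\rm Alg}\mathcal{L}$ with no extra constraint, which gives $A_0N=0$ and $A_0^{\ast}E^{\perp}=0$ at once, and $\xi\otimes g$ handles the direction of $\xi$.

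\textbf{The gap is in (2).} You fix $N$ from a single nonscalar $A\in\mathcal{J}_0$ and claim that every $B\in\mathcal{J}_0$ then has $B_0\in\mathcal{J}(\mathcal{L},N\vee P_\xi)$. This is false. Take $\mathcal{J}_0=\mathbb{C}I+\mathcal{J}(\mathcal{L},M\vee P_\xi)$ and $A=\xi\otimes g$ with $0\neq g\in(I_{-}^{\mathcal{N}}\vee P_\xi)^{\perp}$. Since $A\in\mathcal{J}(\mathcal{L},P_\xi)\subset\mathcal{J}(\mathcal{L},N\vee P_\xi)$ for every $N\neq I$, your minimal $N$ is $0$, or as small as $\mathcal{N}^{0}$ allows. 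But take $M\in\mathcal{N}^{0}$ strictly above that $N$, $x\in M\ominus N$ and $f\in(M\vee P_\xi)^{\perp}$. Then $B=x\otimes f\in\mathcal{J}_0$ has range $\mathbb{C}x\not\subset N\vee P_\xi$. Taking the maximal admissible $N$ instead gives $I_{-}^{\mathcal{N}}$, which fails for $M<I_{-}^{\mathcal{N}}$.

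The mechanism you suggest cannot detect this either. Because $T\xi\in\mathbb{C}\xi$ for $T\in{\rm Alg}\mathcal{L}$, every $[\xi\otimes g,T]$ has the form $\xi\otimes h$. So commuting $B$ with $A$ and its commutators tells you at most that $B_0\xi=0$ and that $B_0^{\ast}$ kills part of $(I_{-}^{\mathcal{N}}\vee P_\xi)^{\perp}$. The maximality argument of (1) worked only because there you had commutation with \emph{all} of $\mathcal{J}(\mathcal{L},E)$.

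\textbf{The missing idea} is to let the whole ideal determine $N$, as the paper does. Write $A=u(A)I+v(A)$ and set $N_0=\bigvee\{N\in\mathcal{N}: v(A)(N\vee P_\xi)=0 \text{ for all } A\in\mathcal{J}_0\}$. Then every $v(A)$ vanishes on $E=N_0\vee P_\xi$ by construction, and $E\neq I$ by non-triviality.

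For the range condition, use the polarized identity $[v(A),[v(B),T]]=0$. For each $N>N_0$ there are $B\in\mathcal{J}_0$ and $x\in N$ with $v(B)x\neq0$. Put $T=x\otimes f$ with $f\in N_{-}^{\perp}$.
\begin{itemize}
\item The case $A=B$ gives $v(B)Tv(B)=0$, hence $v(B)^{\ast}f=0$.
\item The mixed identity then reduces to $(v(A)v(B)x)\otimes f=(v(B)x)\otimes v(A)^{\ast}f$.
\item So $v(A)^{\ast}$ is a scalar on $N_{-}^{\perp}$, and this scalar is $0$ because $v(A)^2=0$.
\end{itemize}
Finally, $\bigvee_{N>N_0}N_{-}^{\perp}=(N_0\vee P_\xi)^{\perp}$ completes the inclusion. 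If $N_0=0$, any $N\in\mathcal{N}^{0}$ works, since $\mathcal{J}(\mathcal{L},P_\xi)\subset\mathcal{J}(\mathcal{L},N\vee P_\xi)$.
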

\begin{proof}
(1) Let $\mathcal{R}$ be a commutative Lie ideal in ${\rm{Alg}}\mathcal{L}$ which contains $\mathbb{C}I+\mathcal{J}(\mathcal{L},E)$ and $A\in\mathcal{R}$. Next we prove that $\mathcal{R}\subset \mathbb{C}I+\mathcal{J}(\mathcal{L},E)$.

For all $x\in N \subset N\vee P_{\xi}$ and $f\in (N \vee P_{\xi})^{\perp}$, we have $x\otimes f\in \mathcal{J}(\mathcal{L},N) \subset\mathcal{J}(\mathcal{L},E)$, then $A(x\otimes f)=(x\otimes f)A$. Thus there is some scalar $\lambda\in\mathbb{C}$ such that $Ax=\lambda x$ and $A^{\ast}f=\bar{\lambda}f$ for all $x\in N$ and $f\in (N \vee P_{\xi})^{\perp}$. Similarly, for any $y\in N\vee P_{\xi}$ and $g\in (I_{-}^{\mathcal{N}} \vee P_{\xi})^{\perp}$, we have $A(y\otimes g)=(y\otimes g)A$, then there is some scalar $\mu\in\mathbb{C}$ such that $Ay=\mu y$ and $A^{\ast}g=\bar{\mu}g$ for all $y\in N\vee P_{\xi}$ and $g\in (I_{-}^{\mathcal{N}} \vee P_{\xi})^{\perp}$. Since $0\neq N\subset N\vee P_{\xi}$, then $\lambda=\mu$. Considering $A-\lambda I$, then $(A-\lambda I)y=0$ and $(A-\lambda I)^{\ast}f=0$ for all $y\in N\vee P_{\xi}$ and $f\in (N\vee P_{\xi})^{\perp}$. Therefore, $A-\lambda I\in\mathcal{J}(\mathcal{L},E)$ and then $\mathcal{R}\subset \mathbb{C}I+\mathcal{J}(\mathcal{L},E)$.

(2) Since $\mathcal{J}_{0}$ is a commutative Lie ideal, for any $A\in \mathcal{J}_{0}$, we have $[A,[A,T]]=0$ for all $T\in {\rm{Alg}}\mathcal{L}$. It follows from Proposition \ref{[A,[A,T]]=0} that there is $E\in \mathcal{L}$ such that $A\in \mathbb{C}I+\mathcal{J}(\mathcal{L},E)$, and by Remark \ref{CLI}, we can assume that $E=N\vee P_{\xi}$ with $N\in\mathcal{N}^{0}$. Now there are two mappings $u:\mathcal{J}_{0}\to \mathbb{C}$ and $v:\mathcal{J}_{0}\to \bigcup\{\mathcal{J}(\mathcal{L},N\vee P_{\xi}):N\in\mathcal{N}\}$ such that $A=u(A)I+v(A)$ for any $A\in\mathcal{J}_{0}$.

Hence, for all $A,B\in\mathcal{J}_{0}$, we have
$$
[v(A),[v(B),T]]=[A,[B,T]]=0, \ \forall T\in{\rm{Alg}}\mathcal{L},
$$
i.e.,
\begin{equation}\label{Th1.1}
v(A)v(B)T-v(A)Tv(B)-v(B)Tv(A)+Tv(B)v(A)=0.
\end{equation}
Let
$$
E=\vee \{N\vee P_{\xi}:N\in\mathcal{N},v(A)|_{N\vee P_{\xi}}=0, \ \forall A\in\mathcal{J}_{0}\}
$$
and
$$
N_{0}=\vee \{N\in\mathcal{N}:v(A)|_{N\vee P_{\xi}}=0, \ \forall A\in\mathcal{J}_{0}\}.
$$
Then $E=N_{0}\vee P_{\xi}$ and $v(A)|_{E}=0$ for all $A\in\mathcal{J}_{0}$. Since $\mathcal{J}_{0}$ is non-trivial Lie ideal, we have $E\neq I$ and $N_{0}\leq I_{-}^{\mathcal{N}}$.

Next, we prove that $v(A)^{\ast}|_{E^{\perp}}=0$ for all $A\in\mathcal{J}_{0}$. Let $A\in\mathcal{J}_{0}$. For $N>N_{0}$, we have $N\nleq N_{0}\vee P_{\xi}$, then there is $x\in N$ and $B\in \mathcal{J}_{0}$ such that $v(B)x\neq 0$. Taking $T=x\otimes f$ in Eq.(\ref{Th1.1}) for all $f\in N_{-}^{\perp}$ and noting $v(B)^{2}=0$, we get
$$
v(B)(x\otimes f)v(B)=0
$$
and
$$
v(A)v(B)(x\otimes f)-v(A)(x\otimes f)v(B)-v(B)(x\otimes f)v(A)+(x\otimes f)v(B)v(A)=0.
$$
The first equation gives $v(B)^{\ast}f=0$ for all $f\in N_{-}^{\perp}$ and the second equation becomes
$$
v(A)v(B)(x\otimes f)=v(B)(x\otimes f)v(A), \ \forall f\in N_{-}^{\perp}.
$$
Then $v(A)^{\ast}|_{N_{-}^{\perp}}$ is a scalar operator. By calculation, we have
\begin{align*}
  \vee\{N_{-}^{\perp}:N>N_{0}\} & = \vee\{(N_{-}^{\mathcal{N}}\vee P_{\xi})^{\perp}:N>N_{0}\}=(\wedge\{N_{-}^{\mathcal{N}}\vee P_{\xi}:N>N_{0}\})^{\perp}\\
   & =(\wedge\{N_{-}^{\mathcal{N}}:N>N_{0}\}\vee P_{\xi})^{\perp}=(N_{0}\vee P_{\xi})^{\perp}=E^{\perp}.
\end{align*}
Thus $v(A)^{\ast}|_{E^{\perp}}$ is a scalar operator, and since $(v(A)^{\ast}|_{E^{\perp}})^{2}=(v(A)^{2})^{\ast}|_{E^{\perp}}=0$, we have $v(A)^{\ast}|_{E^{\perp}}=0$.

By calculation, we have that $\mathcal{J}(\mathcal{L},N_{1}\vee P_{\xi})=\mathcal{J}(\mathcal{L},N_{2}\vee P_{\xi})$ if and only if $N_{1}=N_{2}$. This establishes the uniqueness. Indeed, if $\mathcal{J}(\mathcal{L},N_{1}\vee P_{\xi})=\mathcal{J}(\mathcal{L},N_{2}\vee P_{\xi})$ with $N_{1}\neq N_{2}$, then without loss of generality we may assume $N_{1}<N_{2}$. In this case, there exists $x\in N_{2}\setminus N_{1}$ and a nonzero element $f\in (N_{2}\vee P_{\xi})^{\perp}$ such that $x\otimes f\in\mathcal{J}(\mathcal{L},N_{2}\vee P_{\xi})$ but $x\otimes f\notin\mathcal{J}(\mathcal{L},N_{1}\vee P_{\xi})$.
\end{proof}

\section{The induced mapping}
When studying Lie isomorphisms on nest algebras over Banach spaces, Wang and Lu\cite{Lu4} introduced two mappings. We will adapt these to the setting of the algebra of one point extension of a nest and investigate their properties. Since some of these proof techniques are similar to those for nest algebras, we omit them.

Let $\psi$ be a Lie automorphism on ${\rm{Alg}}\mathcal{L}$. For $N\in\mathcal{N}^{0}$ with $E=N\vee P_{\xi}$, the maximal commutative Lie ideal $\mathbb{C}I+\mathcal{J}(\mathcal{L},E)$ in $\mathrm{Alg}\mathcal{L}$ has an image under $\psi$ that is again a maximal commutative Lie ideal in $\mathrm{Alg}\mathcal{L}$, and is uniquely represented as $\mathbb{C}I+\mathcal{J}(\mathcal{L},M\vee P_{\xi})$ for some unique $M\in\mathcal{N}^{0}$ by Theorem \ref{J(L,E)}.

Now we define
$$
\psi^{0}:\mathcal{N}^{0}\to\mathcal{N}^{0}
$$
by
$$
\psi(\mathbb{C}I+\mathcal{J}(\mathcal{L},N\vee P_{\xi}))=\mathbb{C}I+\mathcal{J}(\mathcal{L},\psi^{0}(N)\vee P_{\xi}).
$$
Then $\psi^{0}$ is a bijective mapping. 


For $A\in\mathcal{J}({\mathcal{L},N\vee P_{\xi}})$ with $N\in\mathcal{N}^{0}$, there is a unique operator $B\in\mathcal{J}(\mathcal{L},\psi^{0}(N)\vee P_{\xi})$ such that $\psi(A)-B\in\mathbb{C}I$. Thus we can define a mapping
$$
\phi:\bigcup\{\mathcal{J}(\mathcal{L},N\vee P_{\xi}):N\in\mathcal{N}^{0}\}\to\bigcup\{\mathcal{J}(\mathcal{L},N\vee P_{\xi}):N\in\mathcal{N}^{0}\}
$$
by
$$
\psi(A)-\phi(A)\in\mathbb{C}I.
$$
It is clear that $\phi$ is a linear and bijective, where the additivity$\phi(A+B)=\phi(A)+\phi(B)$ is required only for those $A+B$ belonging to $\bigcup\{\mathcal{J}(\mathcal{L},N\vee P_{\xi}):N\in\mathcal{N}^{0}\}$. Hence, we have $\phi(\mathcal{J}(\mathcal{L},N\vee P_{\xi}))=\mathcal{J}(\mathcal{L},\psi^{0}(N)\vee P_{\xi})$.
\begin{remark}\label{xi-eta}
For any $f\in(I_{-}^{\mathcal{N}}\vee P_{\xi})^{\perp}$, by Remark \ref{CLI}, we have $\xi\otimes f\in\mathcal{J}(\mathcal{L},N\vee P_{\xi})$ for any $N\in\mathcal{N}^{0}$. Since $\phi(\mathcal{J}(\mathcal{L},N\vee P_{\xi}))=\mathcal{J}(\mathcal{L},\psi^{0}(N)\vee P_{\xi})$, then there exist $T_{N}\in\mathcal{J}(\mathcal{L},\psi^{0}(N))$ and $g_{N}\in(I_{-}^{\mathcal{N}}\vee P_{\xi})^{\perp}$ such that
$$
\phi(\xi\otimes f)=T_{N}+\xi\otimes g_{N}, \ \forall N\in\mathcal{N}^{0}.
$$
Clearly, there are $T\in{\rm{Alg}}\mathcal{L}$ and $g\in(I_{-}^{\mathcal{N}}\vee P_{\xi})^{\perp}$ such that $T=T_{N}$ and $g=g_{N}$ for any $N\in\mathcal{N}^{0}$.

If $0_{+}^{\mathcal{N}}=0$, then $T\mathcal{H}=T_{N}\mathcal{H}\subset N$ for any $N\to 0$, which implies $T=0$. Thus $\phi(\xi\otimes f)=\xi\otimes g$.

If $0_{+}^{\mathcal{N}}\neq0$, then since $T\mathcal{H}=T_{0_{+}^{\mathcal{N}}}\mathcal{H}\subset 0_{+}^{\mathcal{N}}$ and $TI_{-}^{\mathcal{N}}=0$, by the decomposition $\mathcal{H}=0_{+}^{\mathcal{N}}\oplus (I_{-}^{\mathcal{N}}\ominus 0_{+}^{\mathcal{N}})\oplus (I_{-}^{\mathcal{N}})^{\perp}$, we have
$$
T=\left(\begin{array}{ccc}
0&0&T_{13}\\
0&0&0\\
0&0&0
\end{array}\right)\in{\rm{Alg}}\mathcal{L}.
$$
\end{remark}

\begin{lemma}\label{order}
Let $N_{1},N_{2},N_{3}\in\mathcal{N}^{0}$ with $N_{1}<N_{2}<N_{3}$. Suppose that one of $\psi^{0}(N_{1})<\psi^{0}(N_{2})$, $\psi^{0}(N_{1})<\psi^{0}(N_{3})$ and $\psi^{0}(N_{2})<\psi^{0}(N_{3})$ holds. Then $\psi^{0}(N_{1})<\psi^{0}(N_{2})<\psi^{0}(N_{3})$.
\end{lemma}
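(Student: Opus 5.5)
This is the analogue of the order lemma of Marcoux--Sourour for nest algebras. The plan is to encode the order relation $N<M$ in $\mathcal{N}^{0}$ algebraically, so that $\psi$ (which preserves Lie products) transports it, and then to use the fact that $\mathcal{N}^{0}$ is totally ordered.

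\textbf{Step 1: an algebraic characterization of order.} For $N<M$ in $\mathcal{N}^{0}$ one has
$$
\mathcal{J}(\mathcal{L},N\vee P_{\xi})\cdot \mathcal{J}(\mathcal{L},M\vee P_{\xi})=0,
$$
while the reverse product is generally nonzero. To see the first claim, take $A\in\mathcal{J}(\mathcal{L},N\vee P_\xi)$ and $B\in\mathcal{J}(\mathcal{L},M\vee P_\xi)$. The range of $B$ lies in $M\vee P_\xi$, and $A$ vanishes on $N\vee P_{\xi}$. Write $B=B_0+\xi\otimes f$ with $B_0\in\mathcal{J}(\mathcal{L},M)$, as in Remark \ref{CLI}. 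Then $A\xi=0$, and $AB_0=0$ requires $A|_M=0$; this fails when $N<M$, so the product is not automatically zero.

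The correct statement is therefore the other way round. Since $N<M$, we have $B(N\vee P_\xi)=0$ and $\operatorname{Ran}A\subset N\vee P_\xi$, hence $BA=0$. By contrast, $AB$ is nonzero for suitable rank-one choices $x\otimes f$ with $x\in M\ominus N$.

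For Lie maps we need a commutator version. Using $\phi$ and the fact that $\psi(A)-\phi(A)\in\mathbb{C}I$, we have $[\phi(A),\phi(B)]=\phi$-image-modulo-scalars of $[A,B]=AB$. Because $AB\in\mathcal{J}(\mathcal{L},N\vee P_\xi)\cap\mathcal{J}(\mathcal{L},M\vee P_\xi)$ (its range is in $N\vee P_\xi$ and it kills $M\vee P_\xi$), we get the relation $[\mathcal{J}_N,\mathcal{J}_M]\subset\mathcal{J}_N\cap\mathcal{J}_M$. Here we write $\mathcal{J}_N=\mathcal{J}(\mathcal{L},N\vee P_\xi)$.

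\textbf{Step 2: a three-term criterion distinguishing the middle element.} For $N_1<N_2<N_3$, the products satisfy $\mathcal{J}_{N_1}\mathcal{J}_{N_2}\mathcal{J}_{N_3}\neq0$ (compose rank-ones $x_1\otimes f_2$, $x_2\otimes f_3$, and so on), whereas any product of the three in another order with a "wrong" adjacent pair vanishes. Expressed with commutators, $[[\mathcal{J}_{N_1},\mathcal{J}_{N_2}],\mathcal{J}_{N_3}]\neq 0$ exactly when $N_2$ lies between $N_1$ and $N_3$ in a monotone arrangement. Applying $\psi$ and passing to $\phi$ (scalars disappear inside commutators), we conclude that $\psi^{0}(N_2)$ lies between $\psi^{0}(N_1)$ and $\psi^0(N_3)$. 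So $\psi^0$ preserves betweenness.

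\textbf{Step 3: conclusion from betweenness.} Since $\mathcal{N}^0$ is a chain, betweenness means that the images are in order $\psi^0(N_1)<\psi^0(N_2)<\psi^0(N_3)$ or in the reverse order. Any one of the hypothesized inequalities rules out the reverse order, and this gives the claim.

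\textbf{Main obstacle.} The hard step is Step 2: proving that nested commutators are nonzero exactly in the monotone case. The difficulty comes from the extra rank-one pieces $\xi\otimes f$ and from the possible operator $T_{13}$ term of Remark \ref{xi-eta} when $0_+^{\mathcal{N}}\neq0$. I would first handle these by restricting to the subideals $\mathcal{J}(\mathcal{L},N)$, treated modulo $\{\xi\otimes f\}$, where the nest-algebra computation of \cite{Lu4,Marcoux-Sourour} applies essentially verbatim.
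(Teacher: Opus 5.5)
Your Step 1 product rule is correct and is the same fact the paper relies on: if $C_i\in\mathcal{J}(\mathcal{L},M_i\vee P_{\xi})$, then $C_iC_j=0$ unless $M_i<M_j$. The gap is in Step 2: the commutator criterion you state is false. Write $\mathcal{J}_N=\mathcal{J}(\mathcal{L},N\vee P_{\xi})$ and take $A\in\mathcal{J}_{N_a}$, $B\in\mathcal{J}_{N_b}$, $C\in\mathcal{J}_{N_c}$ with $N_a,N_b,N_c$ distinct. Then $[A,B]$ has range in $\min(N_a,N_b)\vee P_{\xi}$ and kills $\max(N_a,N_b)\vee P_{\xi}$. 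Consequently $[[A,B],C]=0$ whenever $N_c$ lies strictly between $N_a$ and $N_b$, and rank-one choices make it nonzero otherwise. So $[[\mathcal{J}_{N_1},\mathcal{J}_{N_2}],\mathcal{J}_{N_3}]\neq0$ holds exactly when $N_3$ is \emph{not} the middle element; it does not single out $N_2$. For instance, if $N_2<N_1<N_3$, the term $-BAC$ in $[[A,B],C]=ABC-BAC-CAB+CBA$ survives. Transporting the nonvanishing for $N_1<N_2<N_3$ through $\psi$ therefore only shows that $\psi^{0}(N_3)$ is not the middle image. That is compatible with $\psi^{0}(N_2)<\psi^{0}(N_1)<\psi^{0}(N_3)$, which satisfies the hypothesis $\psi^{0}(N_1)<\psi^{0}(N_3)$ but violates the conclusion. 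So Step 3 does not follow from what Step 2 actually gives.

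The idea can be repaired. Use instead the \emph{vanishing} $[[\mathcal{J}_{N_1},\mathcal{J}_{N_3}],\mathcal{J}_{N_2}]=0$ and transport it: scalars drop out of commutators, $\psi$ is injective, and $\phi$ maps $\mathcal{J}_{N}$ onto $\mathcal{J}_{\psi^{0}(N)}$. Then prove the converse in the target: if $\psi^{0}(N_2)$ were an endpoint, explicit rank-one elements give a nonzero double commutator. This needs $M\not\le M'\vee P_{\xi}$ for $M'<M$ in $\mathcal{N}^{0}$. That holds because $\xi$ is separating, so $\xi\notin M$, and $(M'\vee P_{\xi})\ominus M'$ is one-dimensional.

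With that fix, you would have a genuinely different route from the paper's. Betweenness is preserved unconditionally, and the hypothesis is used only to fix orientation. The paper instead writes $x_1\otimes f_3$ both as $[x_1\otimes f_1,[x_2\otimes f_2,x_3\otimes f_3]]$ and as $[x_3\otimes f_3,[x_2\otimes f_2,x_1\otimes f_1]]$. It uses the hypothesis to kill $C_3C_2C_1$, then plays the two expansions against each other, leaving $\psi(x_1\otimes f_3)=C_1C_2C_3\neq0$.

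Finally, your stated main obstacle is a red herring. The product rule holds on the full ideals $\mathcal{J}(\mathcal{L},M\vee P_{\xi})$, so the $\xi\otimes f$ and $T_{13}$ pieces never enter. Moreover, restricting to $\mathcal{J}(\mathcal{L},N)$ is not available at this point. The identity $\phi(\mathcal{J}(\mathcal{L},N))=\mathcal{J}(\mathcal{L},\psi^{0}(N))$ (Corollary \ref{J(L,N)}) is proved later using the monotonicity of $\psi^{0}$, which rests on this very lemma.
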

\begin{proof}
For $i\in \{1,2,3\}$, take $x_{i}\in N_{i}$ and $f_{i}\in(N_{i}\vee P_{\xi})^{\perp}$ such that $\langle x_{2},f_{1}\rangle=\langle x_{3},f_{2}\rangle=1$. Then $x_{i}\otimes f_{i}\in\mathcal{J}(\mathcal{L},N_{i})\subset\mathcal{J}(\mathcal{L},N_{i}\vee P_{\xi})$, therefore, $\phi(x_{i}\otimes f_{i})\in\mathcal{J}(\mathcal{L},\psi^{0}(N_{i})\vee P_{\xi})$. So for each case of $\psi^{0}(N_{1})<\psi^{0}(N_{2})$, $\psi^{0}(N_{1})<\psi^{0}(N_{3})$, and $\psi^{0}(N_{2})<\psi^{0}(N_{3})$, since $\psi^{0}(N_{i})\vee P_{\xi}$ is the invariant subspace of any operator in ${\rm{Alg}}\mathcal{L}$, then $\phi(x_{3}\otimes f_{3})\phi(x_{2}\otimes f_{2})\phi(x_{1}\otimes f_{1})=0$. By calculation, we have
\begin{align}\label{rank-1}
  \psi(x_{1}\otimes f_{3}) &= \psi([x_{1}\otimes f_{1},[x_{2}\otimes f_{2},x_{3}\otimes f_{3}]]) \notag\\
  &= [\phi(x_{1}\otimes f_{1}),[\phi(x_{2}\otimes f_{2}),\phi(x_{3}\otimes f_{3})]]\notag\\
  &= \phi(x_{1}\otimes f_{1})\phi(x_{2}\otimes f_{2})\phi(x_{3}\otimes f_{3})\notag\\
  &-\phi(x_{1}\otimes f_{1})\phi(x_{3}\otimes f_{3})\phi(x_{2}\otimes f_{2})-\phi(x_{2}\otimes f_{2})\phi(x_{3}\otimes f_{3})\phi(x_{1}\otimes f_{1})
\end{align}
and
\begin{align}\label{rank-2}
  \psi(x_{1}\otimes f_{3}) &= \psi([x_{3}\otimes f_{3},[x_{2}\otimes f_{2},x_{1}\otimes f_{1}]]) \notag\\
  &= [\phi(x_{3}\otimes f_{3}),[\phi(x_{2}\otimes f_{2}),\phi(x_{1}\otimes f_{1})]]\notag\\
  &= \phi(x_{1}\otimes f_{1})\phi(x_{2}\otimes f_{2})\phi(x_{3}\otimes f_{3})\notag\\
  &-\phi(x_{3}\otimes f_{3})\phi(x_{1}\otimes f_{1})\phi(x_{2}\otimes f_{2})-\phi(x_{2}\otimes f_{2})\phi(x_{1}\otimes f_{1})\phi(x_{3}\otimes f_{3}).
\end{align}
Next, we will prove that
$$
\phi(x_{1}\otimes f_{1})\phi(x_{3}\otimes f_{3})\phi(x_{2}\otimes f_{2})=0
$$
and
$$
\phi(x_{2}\otimes f_{2})\phi(x_{3}\otimes f_{3})\phi(x_{1}\otimes f_{1})=0.
$$
If $\phi(x_{1}\otimes f_{1})\phi(x_{3}\otimes f_{3})\phi(x_{2}\otimes f_{2})\neq 0$, then $\phi(x_{3}\otimes f_{3})\phi(x_{2}\otimes f_{2})\neq 0$, and
$$
\psi^{0}(N_{2})\vee P_{\xi}\nleq \psi^{0}(N_{3})\vee P_{\xi}.
$$
This implies $\psi^{0}(N_{2})\vee P_{\xi}>\psi^{0}(N_{3})\vee P_{\xi}$. Similarly, $\psi^{0}(N_{3})\vee P_{\xi}>\psi^{0}(N_{1})\vee P_{\xi}$. Thus
$$
\psi^{0}(N_{2})>\psi^{0}(N_{3})>\psi^{0}(N_{1}).
$$
By Eq. (\ref{rank-2}), this leads to $\psi(x_{1}\otimes f_{3})=0$, a contradiction.

If $\phi(x_{2}\otimes f_{2})\phi(x_{3}\otimes f_{3})\phi(x_{1}\otimes f_{1})\neq0$, then $\psi^{0}(N_{1})>\psi^{0}(N_{3})>\psi^{0}(N_{2})$, and again by Eq. (\ref{rank-2}), we have $\psi(x_{1}\otimes f_{3})=0$, a contradiction. Therefore, the claim is holds and by Eq. (\ref{rank-1}), we have
$$
\psi(x_{1}\otimes f_{3})=\phi(x_{1}\otimes f_{1})\phi(x_{2}\otimes f_{2})\phi(x_{3}\otimes f_{3})\neq0.
$$
This implies
$$
\psi^{0}(N_{1})<\psi^{0}(N_{2})<\psi^{0}(N_{3}).
$$
\end{proof}

\begin{proposition}\label{order}
$\psi^{0}$ is either order-preserving or order-reversing.
\end{proposition}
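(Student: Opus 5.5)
The plan is to derive the global statement from the three-element Lemma \ref{order} by the standard combinatorial argument used for nest algebras (cf. \cite{Marcoux-Sourour,Lu4}). First, $\psi^{0}$ is a bijection of the totally ordered set $\mathcal{N}^{0}$, so for $N\neq M$ exactly one of $\psi^{0}(N)<\psi^{0}(M)$ or $\psi^{0}(N)>\psi^{0}(M)$ holds. We also need the dual of Lemma \ref{order}: for $N_{1}<N_{2}<N_{3}$ in $\mathcal{N}^{0}$, if one of $\psi^{0}(N_{1})>\psi^{0}(N_{2})$, $\psi^{0}(N_{1})>\psi^{0}(N_{3})$, $\psi^{0}(N_{2})>\psi^{0}(N_{3})$ holds, then $\psi^{0}(N_{1})>\psi^{0}(N_{2})>\psi^{0}(N_{3})$. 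This follows from Lemma \ref{order} itself. If the images were not totally decreasing, some pair among the three would have increasing images, and Lemma \ref{order} would then force all three pairs to be increasing. That contradicts the assumed decreasing pair. So on every triple, $\psi^{0}$ is either increasing on all three pairs or decreasing on all three pairs.

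Next we globalize. Since $\mathcal{N}$ has at least four projections, $\mathcal{N}^{0}$ has at least two elements. Fix $A<B$ in $\mathcal{N}^{0}$ and suppose $\psi^{0}(A)<\psi^{0}(B)$; the other case is symmetric and would give order-reversing. We claim that every pair $N<M$ in $\mathcal{N}^{0}$ has $\psi^{0}(N)<\psi^{0}(M)$.

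First we show that every $C\notin\{A,B\}$ is compared consistently with $A$ and $B$. Order the triple $\{A,B,C\}$ increasingly. It contains the increasing pair $(A,B)$, so by the triple dichotomy all its pairs are increasing.

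Now take an arbitrary pair $N<M$. If $\{N,M\}\cap\{A,B\}\neq\emptyset$, apply the previous step to the triple formed with the remaining element of $\{A,B\}$. Otherwise, first consider the triple $\{A,N,M\}$. We already know that the pair involving $A$ and $N$ is increasing, so by the triple dichotomy the whole triple is increasing, and in particular $\psi^{0}(N)<\psi^{0}(M)$.

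Throughout, "increasing on a pair" must be read relative to the order of the pair in $\mathcal{N}$. The care point is that each step uses a triple of distinct elements; this is guaranteed whenever the elements involved are distinct. This step is routine bookkeeping.

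The main obstacle is whether Lemma \ref{order} really applies to every triple of $\mathcal{N}^{0}$, in particular to triples involving elements near $0$ or near $I_{-}^{\mathcal{N}}$. The proof of that lemma needs vectors $x_{i}\in N_{i}$ and $f_{i}\in(N_{i}\vee P_{\xi})^{\perp}$ with $\langle x_{2},f_{1}\rangle=\langle x_{3},f_{2}\rangle=1$. I would check this directly, using that $\xi$ is separating for $\mathcal{N}''$: this gives $N_{i+1}\nleq N_{i}\vee P_{\xi}$, because $N_{i}\vee P_{\xi}\wedge N_{i+1}=N_{i}$ when $N_{i}<N_{i+1}$ and $N_{i+1}\neq I$. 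Such vectors therefore exist. Granting this, the argument above completes the proof that $\psi^{0}$ is either order-preserving or order-reversing.
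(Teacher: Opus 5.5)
Your argument is correct and is essentially the standard reduction the paper has in mind when it omits the proof as ``similar to that for nest algebras.'' You derive the dual of the three-element lemma from injectivity of $\psi^{0}$ and total ordering, then globalize by anchoring at a fixed pair $A<B$ and passing through triples. Your extra check is also welcome: via the separating property of $\xi$ you show $N_{i+1}\wedge(N_{i}\vee P_{\xi})=N_{i}$, so the vectors needed in the three-element lemma exist for every triple in $\mathcal{N}^{0}$, a detail the paper leaves implicit.
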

\begin{proof}
Since the proof is similar to that for nest algebras, we omit it.\end{proof}

Recall that ${\rm{idem}}(\mathcal{L})$ denote the sets of all idempotent operators in ${\rm{Alg}}\mathcal{L}$. For any $A\in{\rm{idem}}(\mathcal{L})\setminus\{0,I\}$, by Proposition \ref{idem}, there exist $\lambda\in\mathbb{C}$ and $B\in{\rm{idem}}(\mathcal{L})$ such that $\psi(A)=\lambda I+B$. Next we prove that $B\neq 0,I$. Otherwise, $\psi(A)\in\mathbb{C}I$, since $\psi$ is a bijective mapping and $\psi(I)\in\mathbb{C}I$, there is some scalar $\mu\in\mathbb{C}I$ such that $A=\mu I$. As $A$ is an idempotent, $\mu=0$ or $\mu=1$, implying $A=0$ or $A=I$, a contradiction. Thus $B\in{\rm{idem}}(\mathcal{L})\setminus\{0,I\}$. Now, we can define the mapping
$$
\varphi:{\rm{idem}}(\mathcal{L})\setminus\{0,I\}\to{\rm{idem}}(\mathcal{L})\setminus\{0,I\}
$$
by
$$
\psi(A)-\varphi(A)\in\mathbf{C}I, \forall \ A\in{\rm{idem}}(\mathcal{L})\setminus\{0,I\}.
$$
A direct verification shows that $\varphi$ is a bijective mapping. We extend $\varphi$ to all idempotents by setting $\varphi(0)=0$ and $\varphi(I)=I$, and we denote the extended map again by $\varphi$.

At the end of this section, we introduce two commonly used set notations in a remark.

\begin{remark}
Let $E\in\mathcal{L}^{0}$. Define
$$
\Omega_{1}(\mathcal{L},E)=\{P\in{\rm{idem}}(\mathcal{L}):PE=0, \ P^{\ast}E^{\perp}\neq0\}
$$
and
$$
\Omega_{2}(\mathcal{L},E)=\{P\in{\rm{idem}}(\mathcal{L}):PE\neq0, \ P^{\ast}E^{\perp}=0\}.
$$
By definition, the set $\Omega_{2}(\mathcal{L},E)$ is monotonically increasing relative to $E\in\mathcal{L}^{0}$ while $\Omega_{1}(\mathcal{L},E)$ is monotonically decreasing. Moreover, $\Omega_{1}(\mathcal{L},N\vee P_{\xi})\subsetneq\Omega_{1}(\mathcal{L},N)$. Indeed, for any $N\in\mathcal{N}^{0}$, consider the decomposition $\mathcal{H}=N\oplus N^{\perp}$. Write $\xi=(\xi_{1},\xi_{2})^{T}$, then
$$
\left(\begin{array}{cc}
0&\frac{\xi_{1}\otimes \xi_{2}}{\|\xi_{2}\|^{2}}\\
0&1
\end{array}\right)\in\Omega_{1}(\mathcal{L},N)\setminus\Omega_{1}(\mathcal{L},N\vee P_{\xi}).
$$

For any $f\in (I_{-}^{\mathcal{N}})^{\perp}$ with $\langle \xi,f\rangle=1$, we have $\xi\otimes f\in{\rm{Alg}}\mathcal{L}$ is an idempotent. For any $N\in\mathcal{N}^{0}$, by calculation, we have $\xi\otimes f\in \Omega_{2}(\mathcal{L},N\vee P_{\xi})\setminus\Omega_{2}(\mathcal{L},N)$. Consequently, we obtain $\Omega_{2}(\mathcal{L},N)\subsetneq\Omega_{2}(\mathcal{L},N\vee P_{\xi})$.

\end{remark}
\section{Characterizations for the Main Results}


Now we consider the structure of $\Omega_{2}(\mathcal{L},E)$ under the mapping $\varphi$.

Let $P\in \Omega_{2}(\mathcal{L},E)$. For each $C\in\mathcal{J}(\mathcal{L},E)$, we have $C\mathcal{H}\subset E$ and $CE=0$, hence $CP=0$ and
$$
[P,[P,C]]=[P,PC]=PC=[P,C].
$$
Thus
$$
[\psi(P),[\psi(P),\psi(C)]]=[\psi(P),\psi(C)], \ \forall C\in\mathcal{J}(\mathcal{L},E).
$$
Since $\psi(\mathbb{C}I+\mathcal{J}(\mathcal{L},E))=\mathbb{C}I+\mathcal{J}(\mathcal{L},\psi^{0}(N)\vee P_{\xi})$, we have
$$
[\varphi(P),[\varphi(P),D]]=[\varphi(P),D], \ \forall D\in\mathcal{J}(\mathcal{L},\psi^{0}(N)\vee P_{\xi}).
$$
By a direct calculation and $\varphi(P)$ an idempotent, we have
\begin{equation}\label{D}
\varphi(P)D\varphi(P)=D\varphi(P), \ \forall D\in\mathcal{J}(\mathcal{L},\psi^{0}(N)\vee P_{\xi}).
\end{equation}

\begin{lemma}\label{3-cases}
Let $N\in\mathcal{N}^{0}$, $E=N\vee P_{\xi}$, and $P\in\Omega_{2}(\mathcal{L},E)$. Then $\varphi(P)$ falls into exactly three cases.
\end{lemma}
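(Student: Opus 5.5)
The plan is to extract everything from identity (\ref{D}). Set $M=\psi^{0}(N)$, $F=M\vee P_{\xi}$ and $Q=\varphi(P)$, and test (\ref{D}) against the two kinds of generators of $\mathcal{J}(\mathcal{L},F)$ given in Remark \ref{CLI}:
$$
\mathcal{J}(\mathcal{L},F)=\mathcal{J}(\mathcal{L},M)+\{\xi\otimes g:g\in(I_{-}^{\mathcal{N}}\vee P_{\xi})^{\perp}\}.
$$
Since I have to fix what the three cases are, I will aim to prove that exactly one of the following holds:
\begin{itemize}
\item[(i)] $Q^{\ast}F^{\perp}=0$;
\item[(ii)] $QF\neq 0$, $Q|_{F}=I|_{F}$ and $Q^{\ast}F^{\perp}\neq 0$;
\item[(iii)] $Q|_{M}=I|_{M}$ and $Q\xi\neq\xi$, while $Q^{\ast}(I_{-}^{\mathcal{N}}\vee P_{\xi})^{\perp}=0$ and $Q^{\ast}F^{\perp}\neq0$.
\end{itemize}
Case (i) may be refined into $Q\in\Omega_{2}(\mathcal{L},F)$ or $Q=0$ on $F$; the latter is later excluded using injectivity of $\psi$.

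First step: take $D=x\otimes f$ with $x\in M$ and $f\in F^{\perp}$, which lies in $\mathcal{J}(\mathcal{L},M)\subset\mathcal{J}(\mathcal{L},F)$. Then (\ref{D}) reads
$$
Qx\otimes Q^{\ast}f=x\otimes Q^{\ast}f .
$$
So for each pair $(x,f)$, either $Q^{\ast}f=0$ or $Qx=x$. A standard union-of-subspaces argument then applies. The set $\{f\in F^{\perp}:Q^{\ast}f=0\}$ is a subspace, and so is $\{x\in M:Qx=x\}$. Hence either $Q^{\ast}F^{\perp}=0$, or $Q|_{M}$ is the identity on $M$.

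Second step: take $D=\xi\otimes g$ with $g\in(I_{-}^{\mathcal{N}}\vee P_{\xi})^{\perp}$. The same computation gives either $Q^{\ast}(I_{-}^{\mathcal{N}}\vee P_{\xi})^{\perp}=0$, or $Q\xi=\xi$. We then sort the possibilities.
\begin{itemize}
\item If $Q^{\ast}F^{\perp}=0$, we are in (i).
\item Otherwise $Q|_{M}=I$. If moreover $Q\xi=\xi$, then $Q|_{F}=I$, which is (ii).
\item If $Q|_{M}=I$ but $Q\xi\neq\xi$, the second step forces $Q^{\ast}(I_{-}^{\mathcal{N}}\vee P_{\xi})^{\perp}=0$, which is (iii).
\end{itemize}
Mutual exclusivity is immediate from the defining conditions, since (ii) and (iii) both require $Q^{\ast}F^{\perp}\neq0$ and are distinguished by whether $Q\xi=\xi$.

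The main obstacle I expect is showing that each case is genuinely of the stated form and non-degenerate. In case (iii) this means describing $Q$ concretely. I would use the decomposition $\mathcal{H}=M\oplus(I_{-}^{\mathcal{N}}\ominus M)\oplus(I_{-}^{\mathcal{N}})^{\perp}$ and the block-matrix analysis already used in Remark \ref{CLI} and Remark \ref{xi-eta}. The aim is to show that the corner of $Q$ on $(I_{-}^{\mathcal{N}})^{\perp}$ is a multiple of $\xi_{3}\otimes g$. This needs the invariance of $M\vee P_{\xi}$ and $I_{-}^{\mathcal{N}}\vee P_{\xi}$ under $Q$, together with $Q\in{\rm{Alg}}\mathcal{L}$.

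For excluding $Q|_{F}=0$ within case (i), I would argue as follows. If $QF=0$ and $Q^{\ast}F^{\perp}=0$, then $Q\in\mathcal{J}(\mathcal{L},F)$. But $Q$ is also idempotent, and since elements of $\mathcal{J}(\mathcal{L},F)$ square to zero, this forces $Q=0$. That contradicts $P\neq0$ together with the bijectivity of $\varphi$.
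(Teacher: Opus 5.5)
Your argument is correct and is essentially the paper's proof: testing (\ref{D}) first on $x\otimes f$ with $x\in\psi^{0}(N)$ and $f\in(\psi^{0}(N)\vee P_{\xi})^{\perp}$, and then on $\xi\otimes g$ with $g\in(I_{-}^{\mathcal{N}}\vee P_{\xi})^{\perp}$, is exactly how the paper obtains Forms (1)--(3), and your exclusion of $Q|_{F}=0$ in case (i) is a correct small refinement. One point should be made explicit: $Q\xi\in\{0,\xi\}$ because $P_{\xi}\in\mathcal{L}$ and $Q^{2}=Q$, so your case (iii) is the paper's case $Q\xi=0$, and together with $Q^{\ast}(I_{-}^{\mathcal{N}}\vee P_{\xi})^{\perp}=0$ this makes the $(I_{-}^{\mathcal{N}})^{\perp}$-corner equal to $\xi_{3}\otimes h$ with $\langle \xi_{3},h\rangle=0$, hence square-zero and (being idempotent) zero, which gives the zero third row of Form (3) rather than merely a multiple of $\xi_{3}\otimes g$.
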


\begin{proof}
(1) If $\varphi^{\ast}(P)(\psi^{0}(N)\vee P_{\xi})^{\perp}=0$, by the composition $\mathcal{H}=(\psi^{0}(N)\vee P_{\xi})\oplus (\psi^{0}(N)\vee P_{\xi})^{\perp}$, we have
\begin{equation}
\varphi(P)=\left(\begin{array}{cc}
\tilde{P}_{11}&\tilde{P}_{12}\\
0&0
\end{array}\right).
\tag{Form (1)}
\end{equation}

(2) If $\varphi^{\ast}(P)(\psi^{0}(N)\vee P_{\xi})^{\perp}\neq0$, then there exists $f\in(\psi^{0}(N)\vee P_{\xi})^{\perp}$ such that $\varphi^{\ast}(P)f\neq 0$. For any $x\in \psi^{0}(N)$, we have $x\otimes f\in\mathcal{J}(\mathcal{L},\psi^{0}(N)\vee P_{\xi})$, and by Eq.(\ref{D}), we obtain
$$
\varphi(P)(x\otimes f)\varphi(P)=(x\otimes f)\varphi(P).
$$
Thus $\varphi(P)x=x$ for any $x\in \psi^{0}(N)$.

(2.1) If $\varphi(P)\xi=\xi$, by the composition $\mathcal{H}=(\psi^{0}(N)\vee P_{\xi})\oplus (\psi^{0}(N)\vee P_{\xi})^{\perp}$, it follows that
\begin{equation}
\varphi(P)=\left(\begin{array}{cc}
1&\tilde{P}_{12}\\
0&\tilde{P}_{22}
\end{array}\right),
\tag{Form (2)}
\end{equation}
where $\tilde{P}_{22}\neq 0,1$. 

(2.2) If $\varphi(P)\xi=0$. For any $g\in (I_{-}^{\mathcal{N}}\vee P_{\xi})^{\perp}$, we have $\xi\otimes g\in \mathcal{J}(\mathcal{L},\psi^{0}(N)\vee P_{\xi})$. By Eq.(\ref{D}), we have
$$
0=\varphi(P)(\xi\otimes g)\varphi(P)=(\xi\otimes g)\varphi(P),
$$
it follows that $\varphi^{\ast}(P)(I_{-}^{\mathcal{N}}\vee P_{\xi})^{\perp}=0$ and $\varphi(P)$ is an idempotent, by the composition $\mathcal{H}=\psi^{0}(N)\oplus (I_{-}^{\mathcal{N}}\ominus \psi^{0}(N))\oplus (I_{-}^{\mathcal{N}})^{\perp}$, we obtain
\begin{equation}
\varphi(P)=\left(\begin{array}{ccc}
1&\tilde{P}_{12}&\tilde{P}_{13}\\
0&\tilde{P}_{22}&\tilde{P}_{23}\\
0&0&0
\end{array}\right),
\tag{Form (3)}
\end{equation}
where $\tilde{P}_{22}\neq 0$.

Moreover, it can be observed that when $\psi^{0}(N)= I_{-}^{\mathcal{N}}$, $\varphi(P)$ certainly does not satisfy Form (3).
\end{proof}

By Remark \ref{xi-eta}, there are different structures corresponding to different $0_{+}^{\mathcal{N}}$. We shall therefore consider the following two cases.

\subsection{The case that $0_{+}^{\mathcal{N}}=0$}

For any $f\in(I_{-}^{\mathcal{N}}\vee P_{\xi})^{\perp}$, it follows from Remark \ref{xi-eta} that there is some $\tilde{f}\in(I_{-}^{\mathcal{N}}\vee P_{\xi})^{\perp}$ such that $\phi(\xi\otimes f)=\xi\otimes \tilde{f}$. Consider the mapping $\psi^{-1}$, we have $\phi(\{\xi\otimes f:f\in(I_{-}^{\perp}\vee P_{\xi})^{\perp}\})=\{\xi\otimes \tilde{f}:\tilde{f}\in(I_{-}^{\mathcal{N}}\vee P_{\xi})^{\perp}\}$.

For any $f_{0}\in (I_{-}^{\mathcal{N}})^{\perp}$ with $\langle \xi, f_{0}\rangle=1$, the operator $\xi\otimes f_{0}$ is an idempotent. We claim that $\varphi(\xi\otimes f_{0})\xi=\xi$. Indeed, suppose to the contrary that $\varphi(\xi\otimes f_{0})\xi\neq \xi$. Since $\varphi(\xi\otimes f_{0})$ is an idempotent, it follows that $\varphi(\xi\otimes f_{0})\xi=0$. By direct computation, we have
$$
[\xi\otimes f_{0},\xi\otimes f]=\xi\otimes f, \ \forall f\in(I_{-}^{\mathcal{N}}\vee P_{\xi})^{\perp}.
$$
Therefore,
$$
[\varphi(\xi\otimes f_{0}),\phi(\xi\otimes f)]=\phi(\xi\otimes f), \ \forall f\in(I_{-}^{\mathcal{N}}\vee P_{\xi})^{\perp},
$$
this implies
\begin{align}\label{ast}
[\varphi(\xi\otimes f_{0}),\xi\otimes \tilde{f}]=\xi\otimes \tilde{f}, \ \forall \tilde{f}\in(I_{-}^{\mathcal{N}}\vee P_{\xi})^{\perp}.
\end{align}
Since $\varphi(\xi\otimes f_{0})\xi=0$, it follows that $\varphi(\xi\otimes f_{0})^{\ast}\tilde{f}=\tilde{f}$ for all $\tilde{f}\in(I_{-}^{\mathcal{N}}\vee P_{\xi})^{\perp}$, which contradicts Lemma \ref{3-cases}. This completes the proof of the claim. Thus $\varphi(\xi\otimes f_{0})\xi=\xi$ and then $\varphi(\xi\otimes f_{0})$ not have the Form (3), and by Eq. (\ref{ast}), we have $\varphi^{\ast}(\xi\otimes f_{0})(I_{-}^{\mathcal{N}}\vee P_{\xi})^{\perp}=0$.

\begin{lemma}\label{xi-f0}
For any $f_{0}\in (I_{-}^{\mathcal{N}})^{\perp}$ with $\langle \xi, f_{0}\rangle=1$, there exists some $\tilde{f}_{0} \in (I_{-}^{\mathcal{N}})^{\perp}$ with $\langle \xi, \tilde{f}_{0} \rangle = 1$ such that $\varphi(\xi \otimes f_{0}) = \xi \otimes \tilde{f}_{0}$.
\end{lemma}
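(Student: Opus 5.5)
Write $Q=\varphi(\xi\otimes f_{0})$. By the discussion just before the statement we already know that $Q\xi=\xi$ and $Q^{\ast}(I_{-}^{\mathcal{N}}\vee P_{\xi})^{\perp}=0$. The plan is to show that the range of $Q$ is exactly $\mathbb{C}\xi$. Once that is done, $Q=\xi\otimes\tilde f_{0}$ for a unique vector $\tilde f_{0}$. Then $Q\xi=\xi$ gives $\langle\xi,\tilde f_{0}\rangle=1$. Since $Q\in{\rm{Alg}}\mathcal{L}$ leaves every $N\in\mathcal{N}$ invariant and $\xi$ is separating for $\mathcal{N}''$ (so $\xi\notin N$ for $N\neq I$), we get $\tilde f_{0}\perp N$ for all $N\in\mathcal{N}^{0}$, hence $\tilde f_{0}\in(I_{-}^{\mathcal{N}})^{\perp}$.

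First I show that $Q$ kills every $N\in\mathcal{N}^{0}$. In the original algebra, $\xi\otimes f_{0}$ annihilates every $N\in\mathcal{N}^{0}$. So for $x\in N$ and $f\in(N\vee P_{\xi})^{\perp}$ we have $(\xi\otimes f_{0})(x\otimes f)=0$ and $(x\otimes f)(\xi\otimes f_{0})=\langle\xi,f\rangle x\otimes f_{0}=0$. Hence $\xi\otimes f_{0}$ commutes with all of $\mathcal{J}(\mathcal{L},N)$, and indeed with all of $\mathcal{J}(\mathcal{L},N\vee P_{\xi})$, since the extra part $\xi\otimes g$ satisfies $[\xi\otimes f_{0},\xi\otimes g]=\xi\otimes g$. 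I would therefore separate the two pieces using the decomposition $\mathcal{J}(\mathcal{L},N\vee P_{\xi})=\mathcal{J}(\mathcal{L},N)+\{\xi\otimes g\}$ from Remark \ref{CLI}. Commuting with $\mathcal{J}(\mathcal{L},N)$ should be transported by $\psi$ to the statement that $Q$ commutes with the corresponding part of $\mathcal{J}(\mathcal{L},\psi^{0}(N)\vee P_{\xi})$. Here I use that $\phi$ maps $\{\xi\otimes g\}$ onto itself, which holds in the case $0_{+}^{\mathcal{N}}=0$. Testing $[Q,x\otimes f]=0$ for $x\in\psi^{0}(N)$ and $f\in(\psi^{0}(N)\vee P_{\xi})^{\perp}$ then gives $Qx=\lambda x$ and $Q^{\ast}f=\bar\lambda f$. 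Since $Q^{\ast}$ vanishes on $(I_{-}^{\mathcal{N}}\vee P_{\xi})^{\perp}\subset(\psi^{0}(N)\vee P_{\xi})^{\perp}$, which is nonzero, we get $\lambda=0$. As $\psi^{0}$ is a bijection of $\mathcal{N}^{0}$, $Q$ vanishes on every $N\in\mathcal{N}^{0}$, hence on $I_{-}^{\mathcal{N}}$.

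Next I analyse the block form. With respect to $\mathcal{H}=I_{-}^{\mathcal{N}}\oplus(I_{-}^{\mathcal{N}})^{\perp}$ and $\xi=\xi_{1}\oplus\xi_{2}$, the previous step gives
$Q=\left(\begin{array}{cc}0&Q_{12}\\0&Q_{22}\end{array}\right)$.
Invariance of $I_{-}^{\mathcal{N}}\vee P_{\xi}$ together with $Q^{\ast}(I_{-}^{\mathcal{N}}\vee P_{\xi})^{\perp}=0$ forces the range of $Q$ to lie in $I_{-}^{\mathcal{N}}\vee P_{\xi}$. Thus $Q_{22}$ has range in $\mathbb{C}\xi_{2}$, so $Q_{22}=\xi_{2}\otimes h$. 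Now any $y$ with $Qy=\xi_{1}'+c\xi$ and $\xi_{1}'\in I_{-}^{\mathcal{N}}$ satisfies $Q^{2}y=Qy$. But $Q(\xi_{1}'+c\xi)=c\xi$, because $Q$ kills $I_{-}^{\mathcal{N}}$ and fixes $\xi$. Hence $\xi_{1}'=0$, so ${\rm Ran}\,Q=\mathbb{C}\xi$ and $Q$ has rank one.

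The main obstacle is the transport in the first step. The claim that $\psi$ carries the commutant condition on $\mathcal{J}(\mathcal{L},N)$ to one on $\mathcal{J}(\mathcal{L},\psi^{0}(N))$ is not given by earlier results, since $\phi$ is only known to preserve $\mathcal{J}(\mathcal{L},N\vee P_{\xi})$. My first attempt avoids this: use directly that $Q$ commutes, modulo $\xi\otimes g$-terms, with all of $\mathcal{J}(\mathcal{L},\psi^{0}(N)\vee P_{\xi})$. That is, $[Q,D]\in\{\xi\otimes g\}$ for every such $D$, because $[\xi\otimes f_{0},C]$ lies in $\{\xi\otimes g\}$ for all $C\in\mathcal{J}(\mathcal{L},N\vee P_{\xi})$ and $\phi$ preserves that set. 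Applying this with $D=x\otimes f$ and $x\in\psi^{0}(N)$ gives $Qx\otimes f-x\otimes Q^{\ast}f\in\mathbb{C}\xi\otimes(\cdot)$. Comparing ranges then still yields $Qx=0$, using $Q^{\ast}f=0$ for the relevant $f$.
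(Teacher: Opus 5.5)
Your proof is correct once the main-text version of the first step is dropped in favour of the workaround in your last paragraph. (That version derives $Qx=\lambda x$ and $Q^{\ast}f=\bar\lambda f$ from a commutation of $Q$ with $\mathcal{J}(\mathcal{L},\psi^{0}(N))$ that is not yet available.) With that change, your argument takes a genuinely different route from the paper.

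The paper works through the trichotomy of Lemma~\ref{3-cases}:
\begin{itemize}
\item Form~(3) is excluded because $Q\xi=\xi$.
\item Form~(2) is excluded by taking $N_{0}$ to be the supremum of the $N$ for which it occurs, and deriving a contradiction in each of the two possible shapes of $Q$ at $N_{0}$.
\item Form~(1) then holds for every $N$, which gives $\mathrm{Ran}\,Q\subset\bigwedge_{N}(\psi^{0}(N)\vee P_{\xi})=P_{\xi}$. This is where the paper uses $0_{+}^{\mathcal{N}}=0$.
\end{itemize}

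You bypass the forms entirely. The rank-one test $[Q,x\otimes f]$, with $x\in\psi^{0}(N)$ and $0\neq f\in(I_{-}^{\mathcal{N}}\vee P_{\xi})^{\perp}$, shows that $Q$ annihilates each $\psi^{0}(N)$. Idempotence together with $\mathrm{Ran}\,Q\subset I_{-}^{\mathcal{N}}+\mathbb{C}\xi$ then forces $\mathrm{Ran}\,Q=\mathbb{C}\xi$.

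Your route needs only two facts: $[\xi\otimes f_{0},C]\in\{\xi\otimes g\}$ for $C\in\mathcal{J}(\mathcal{L},N\vee P_{\xi})$, and $\phi$ preserves this set (Remark~\ref{xi-eta}, which is where you use $0_{+}^{\mathcal{N}}=0$). In particular you never need the following two facts, both of which the paper's Form~(2) subcase invokes:
\begin{itemize}
\item That $\psi^{0}$ is order-preserving. At that point only ``preserving or reversing'' has been established.
\item That $Q$ commutes with all of $\mathcal{J}(\mathcal{L},\psi^{0}(N_{0}))$. This is only obtained afterwards, in Corollary~\ref{J(L,N)}, as a consequence of this very lemma.
\end{itemize}
So the obstacle you flagged is real, and your workaround avoids it more cleanly than the paper does. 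What the paper's approach buys is mainly uniformity with the Form analysis it reuses later, for example in Proposition~\ref{Omega1}.

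Two details should be written out.
\begin{itemize}
\item Since $\psi(\xi\otimes g)=\phi(\xi\otimes g)+eI$, what you actually obtain is $Qx\otimes f=\xi\otimes\tilde g+eI$. The left side has finite rank and $\mathcal{H}$ is infinite dimensional, so $e=0$.
\item ``Comparing ranges'' only gives $Qx\in\mathbb{C}\xi$. To conclude $Qx=0$ you also need $Qx\in\psi^{0}(N)$, which holds because $Q\in\mathrm{Alg}\mathcal{L}$ leaves $\psi^{0}(N)$ invariant. You also need $\xi\notin\psi^{0}(N)$, which holds because $\xi$ is separating for $\mathcal{N}''$.
\end{itemize}
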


\begin{proof}
Firstly, we prove that there does not exist any $N \in \mathcal{N}^{0}$ such that $\varphi(\xi \otimes f_{0})$ satisfies Form (2) in Lemma \ref{3-cases}.

Suppose there exists some $N \in \mathcal{N}^{0}$ such that $\varphi(\xi \otimes f_{0})$ has Form (2). Let $N_{0}$ be the supremum of all $N \in \mathcal{N}^{0}$ for which $\varphi(\xi \otimes f_{0})$ has Form (2) under the decomposition $\mathcal{H}=(\psi^{0}(N)\vee P_{\xi})\oplus (\psi^{0}(N)\vee P_{\xi})^{\perp}$.

Now, for $\varphi(\xi \otimes f_{0})$, there are two cases under the decomposition
$$
\mathcal{H}=(\psi^{0}(N_{0})\vee P_{\xi})\oplus (\psi^{0}(N_{0})\vee P_{\xi})^{\perp},
$$
either it has Form (1) or it has Form (2). In the following, we show that neither case can occur.

If $\varphi(\xi\otimes f_{0})$ has the Form (1) by the decomposition $\mathcal{H}=(\psi^{0}(N_{0})\vee P_{\xi})\oplus (\psi^{0}(N_{0})\vee P_{\xi})^{\perp}$, this implies
$$
\varphi(\xi\otimes f_{0})=\left(\begin{array}{cc}
1&*\\
0&0
\end{array}\right)=
\left(\begin{array}{cc}
1&\tilde{P}_{12}\\
0&0
\end{array}\right)+\xi\otimes f, \ {\rm{for \ some \ }}f\in(I_{-}^{\mathcal{N}})^{\perp},
$$
where the second matrix by the decomposition $\mathcal{H}=\psi^{0}(N_{0})\oplus \psi^{0}(N_{0})^{\perp}$.

For any $C\in\mathcal{J}(\mathcal{L},N_{0})$, we have $[\xi\otimes f_{0},C]=0$, this implies $[\varphi(\xi\otimes f_{0}),D]=0$ for some $D\in\mathcal{J}(\mathcal{L},\psi^{0}(N_{0})\vee P_{\xi})$. By Remark \ref{CLI}, we have
$$
D=\left(\begin{array}{cc}
0&D_{12}\\
0&0
\end{array}\right)+\xi\otimes f^{'}, \ {\rm{for \ some \ }}f^{'}\in(I_{-}^{\mathcal{N}}\vee P_{\xi})^{\perp},
$$
where $D_{12}\neq 0$. By direct calculation, we have $0=[\varphi(\xi\otimes f_{0}),D]=D$, which leads to a contradiction.

If $\varphi(\xi\otimes f_{0})$ has the Form (2) with $\mathcal{H}=(\psi^{0}(N_{0})\vee P_{\xi})\oplus (\psi^{0}(N_{0})\vee P_{\xi})^{\perp}$. By Proposition \ref{order}, the mapping $\psi^{0}$ is order-preserving, which implies $\psi^{0}(I_{-}^{\mathcal{N}}) = I_{-}^{\mathcal{N}}$. Since $\varphi^{*}(\xi \otimes f_{0})(I_{-}^{\mathcal{N}} \vee P_{\xi})^{\perp} = 0$, it follows that $\varphi(\xi \otimes f_{0})$ does not satisfy Form (2) under the decomposition $\mathcal{H}=(I_{-}^{\mathcal{N}}\vee P_{\xi})\oplus (I_{-}^{\mathcal{N}}\vee P_{\xi})^{\perp}$. This implies $N_{0}<I_{-}^{\mathcal{N}}$ and $\psi^{0}(N_{0})<I_{-}^{\mathcal{N}}$. Now, we shall show that $(N_{0})^{\mathcal{N}}_{+}\neq N_{0}$. If $(N_{0})^{\mathcal{N}}_{+}= N_{0}$, then there is a sequence $N\in\mathcal{N}^{0}$ with descending to $N_{0}$ such that $\varphi(\xi\otimes f_{0})$ have the Form (1) by the decomposition $\mathcal{H}=(\psi^{0}(N)\vee P_{\xi})\oplus (\psi^{0}(N)\vee P_{\xi})^{\perp}$. Consequently, $\varphi(\xi\otimes f_{0})$ would also have the Form (1) by the decomposition $\mathcal{H}=(\psi^{0}(N_{0})\vee P_{\xi})\oplus (\psi^{0}(N_{0})\vee P_{\xi})^{\perp}$, which leads to a contradiction. Thus we can assume that
$$
\varphi(\xi\otimes f_{0})=\left(\begin{array}{ccc}
1&*&*\\
0&(\neq0)&*\\
0&0&0
\end{array}\right)
$$
with the decomposition $\mathcal{H}=(\psi^{0}(N_{0})\vee P_{\xi})\oplus \{(\psi^{0}((N_{0})^{\mathcal{N}}_{+})\vee P_{\xi})\ominus (\psi^{0}(N_{0})\vee P_{\xi})\}\oplus (\psi^{0}((N_{0})^{\mathcal{N}}_{+})\vee P_{\xi})^{\perp}$. In fact, we may write in the decomposition $\mathcal{H}=\psi^{0}({N_{0}})\oplus (\psi^{0}((N_{0})^{\mathcal{N}}_{+})\ominus \psi^{0}({N_{0}}))\oplus (\psi^{0}((N_{0})^{\mathcal{N}}_{+}))^{\perp}$, under which
$$
\varphi(\xi\otimes f_{0})=\left(\begin{array}{ccc}
1&\tilde{P}_{12}&\tilde{P}_{13}\\
0&\tilde{P}_{22}&\tilde{P}_{23}\\
0&0&0
\end{array}\right)+\xi\otimes f,
$$
where $f\in(I_{-}^{\mathcal{N}})^{\perp}$ satisfies $\langle \xi,f\rangle=1$. Similarly, since $[\xi\otimes f_{0},C]=0$ for all $C\in\mathcal{J}(\mathcal{L},N_{0})$, it follows that $[\varphi(\xi\otimes f_{0}),D]=0$ for all $D\in\mathcal{J}(\mathcal{L},\psi^{0}(N_{0}))$. This leads to a contradiction. Indeed, taking
$$
D=\left(\begin{array}{ccc}
0&0&D_{13}\\
0&0&0\\
0&0&0
\end{array}\right),
$$
with $D_{13}\neq 0$, we have $[\varphi(\xi\otimes f_{0}),D]\neq 0$.

In summary, $\varphi(\xi \otimes f_{0})$ satisfies Form (1) for every $N \in \mathcal{N}^{0}$. Since $\varphi(\xi \otimes f_{0})\mathcal{H} \subset \psi^{0}(N) \vee P_{\xi}$, it follows that $\varphi(\xi \otimes f_{0})\mathcal{H} \subset P_{\xi}$. Therefore, there exists some $\tilde{f}_{0} \in (I_{-}^{\mathcal{N}})^{\perp}$ with $\langle \xi, \tilde{f}_{0} \rangle = 1$ such that $\varphi(\xi \otimes f_{0}) = \xi \otimes \tilde{f}_{0}$.
\end{proof}

\begin{corollary}\label{J(L,N)}
Let $N\in\mathcal{N}^{0}$ and $C\in\mathcal{J}(\mathcal{L},N)$. Then $\phi(C)\in\mathcal{J}(\mathcal{L},\psi^{0}(N))$. Moreover, $\phi(\mathcal{J}(\mathcal{L},N))=\mathcal{J}(\mathcal{L},\psi^{0}(N))$.
\end{corollary}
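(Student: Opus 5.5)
We work in the standing case $0_{+}^{\mathcal{N}}=0$ of this subsection. The strategy is to characterize $\mathcal{J}(\mathcal{L},N)$ inside $\mathcal{J}(\mathcal{L},N\vee P_{\xi})$ by a Lie-algebraic condition that $\psi$ preserves. By Remark \ref{CLI}, every $A\in\mathcal{J}(\mathcal{L},N\vee P_{\xi})$ decomposes uniquely as $A=C+\xi\otimes f$ with $C\in\mathcal{J}(\mathcal{L},N)$ and $f\in(I_{-}^{\mathcal{N}}\vee P_{\xi})^{\perp}$. The natural test objects are the idempotents $\xi\otimes f_{0}$ with $f_{0}\in(I_{-}^{\mathcal{N}})^{\perp}$ and $\langle\xi,f_{0}\rangle=1$.

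\textbf{Step 1 (the characterization).} For $C\in\mathcal{J}(\mathcal{L},N)$ we have $C\xi=0$, since $\xi\in N\vee P_{\xi}$ is killed. We also have $(\xi\otimes f_{0})C=\xi\otimes C^{\ast}f_{0}=0$, because $f_{0}\perp I_{-}^{\mathcal{N}}\supseteq N^{\perp}$-part: indeed $C^{\ast}(N^{\perp})\subset N^{\perp}$-adjoint range, and $C^{\ast}$ kills $(N\vee P_\xi)^\perp$ while $C^{\ast}f_0$ pairs with $\mathcal{H}$ as $\langle Cz,f_0\rangle$ with $Cz\in N\subset I_-^{\mathcal{N}}$. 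Hence $[\xi\otimes f_{0},C]=0$. On the other hand, $[\xi\otimes f_{0},\xi\otimes f]=\xi\otimes f$. So for $A=C+\xi\otimes f$ we get $[\xi\otimes f_{0},A]=\xi\otimes f$. Consequently,
$$
\mathcal{J}(\mathcal{L},N)=\{A\in\mathcal{J}(\mathcal{L},N\vee P_{\xi}):[\xi\otimes f_{0},A]=0\}
$$
for any fixed admissible $f_{0}$.

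\textbf{Step 2 (transport).} Take $C\in\mathcal{J}(\mathcal{L},N)$. Since $\psi(C)-\phi(C)$ and $\psi(\xi\otimes f_{0})-\varphi(\xi\otimes f_{0})$ are scalars, applying $\psi$ gives $[\varphi(\xi\otimes f_{0}),\phi(C)]=0$. By Lemma \ref{xi-f0}, $\varphi(\xi\otimes f_{0})=\xi\otimes\tilde f_{0}$ with $\tilde f_{0}$ admissible, and $\phi(C)\in\mathcal{J}(\mathcal{L},\psi^{0}(N)\vee P_{\xi})$. Applying Step 1 with $\psi^{0}(N)$ in place of $N$ yields $\phi(C)\in\mathcal{J}(\mathcal{L},\psi^{0}(N))$.

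\textbf{Step 3 (equality).} For the reverse inclusion we run the same argument with $\psi^{-1}$. Its induced maps are $(\psi^{0})^{-1}$ and $\phi^{-1}$; this is clear from the defining relation modulo scalars and the fact that $\psi(I)\in\mathbb{C}I$. Lemma \ref{xi-f0} also applies to $\psi^{-1}$, since the preceding analysis is symmetric. This gives $\phi^{-1}(\mathcal{J}(\mathcal{L},\psi^{0}(N)))\subset\mathcal{J}(\mathcal{L},N)$, and hence equality.

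\textbf{Expected obstacle.} The main obstacle is the verification in Step 1 that $\xi\otimes f_{0}$ commutes with every $C\in\mathcal{J}(\mathcal{L},N)$, specifically that $\langle Cz,f_{0}\rangle=0$. This needs $C\mathcal{H}\subset N\subseteq I_{-}^{\mathcal{N}}$, which holds because $N\le I_{-}^{\mathcal{N}}$ for $N\in\mathcal{N}^{0}$, together with $f_{0}\perp I_{-}^{\mathcal{N}}$. A secondary check is that Lemma \ref{xi-f0} is legitimately available for $\psi^{-1}$.
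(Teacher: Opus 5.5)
Your proposal is correct and takes essentially the same route as the paper. The paper likewise writes $\phi(C)=D+\xi\otimes f$ using Remark \ref{CLI}, transports $[\xi\otimes f_{0},C]=0$ through Lemma \ref{xi-f0} to get $0=[\xi\otimes\tilde f_{0},D+\xi\otimes f]=\xi\otimes f$, and obtains equality by running the argument for the inverse. The first justification you give in Step 1 is garbled, but the reason in your final paragraph is the right one: $C\xi=0$, and $C\mathcal{H}\subset N\subseteq I_{-}^{\mathcal{N}}$, which is orthogonal to $f_{0}$.
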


\begin{proof}
For any $C\in\mathcal{J}(\mathcal{L},N)$, since $\phi(\mathcal{J}(\mathcal{L},N\vee P_{\xi}))=\mathcal{J}(\mathcal{L},\psi^{0}(N)\vee P_{\xi})$, so there are $D\in\mathcal{J}(\mathcal{L},\psi^{0}(N))$ and $f\in(I_{-}^{\mathcal{N}}\vee P_{\xi})^{\perp}$ such that $\phi(C)=D+\xi\otimes f$. By calculation, we have $[\xi\otimes f_{0},C]=0$ for any $f_{0}\in (I_{-}^{\mathcal{N}})^{\perp}$ with $\langle \xi, f_{0}\rangle=1$, it follows that $[\varphi(\xi\otimes f_{0}),\phi(C)]=0$. It follow from Lemma \ref{xi-f0} that there exists some $\tilde{f}_{0}\in (I_{-}^{\mathcal{N}})^{\perp}$ with $\langle \xi, \tilde{f}_{0}\rangle = 1$ such that $\varphi(\xi \otimes f_{0}) = \xi \otimes \tilde{f}_{0}$, thus
$$
[\xi\otimes \tilde{f}_{0},D+\xi\otimes f]=0.
$$
This implies $f=0$, hence $\phi(C)=D\in\mathcal{J}(\mathcal{L},\psi^{0}(N))$. By considering $\phi^{-1}$, we obtain $\phi(\mathcal{J}(\mathcal{L},N))=\mathcal{J}(\mathcal{L},\psi^{0}(N))$.
\end{proof}

\begin{proposition}\label{Omega1}
For any $N\in\mathcal{N}^{0}$ and $Q \in \Omega_{2}(\mathcal{L}, N)$, we have $\varphi(Q)\in \Omega_{2}(\mathcal{L}, \psi^{0}(N))$.
\end{proposition}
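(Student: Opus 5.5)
We follow the same route that led to Eq.~(\ref{D}), but use the finer ideal $\mathcal{J}(\mathcal{L},N)$ and Corollary~\ref{J(L,N)} in place of $\mathcal{J}(\mathcal{L},N\vee P_{\xi})$.

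Let $Q\in\Omega_{2}(\mathcal{L},N)$, so $QN\neq0$ and $Q^{\ast}N^{\perp}=0$. The second condition says $Q\mathcal{H}\subset N$. For every $C\in\mathcal{J}(\mathcal{L},N)$ we have $C\mathcal{H}\subset N$ and $CN=0$, hence $CQ=0$. It follows that
$$[Q,[Q,C]]=[Q,C] \quad\text{for all } C\in\mathcal{J}(\mathcal{L},N).$$
Apply $\psi$ and discard the scalar parts of $\psi(Q)$ and $\psi(C)$. Corollary~\ref{J(L,N)} gives $\phi(\mathcal{J}(\mathcal{L},N))=\mathcal{J}(\mathcal{L},\psi^{0}(N))$, and $\varphi(Q)$ is an idempotent. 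The same computation that produced Eq.~(\ref{D}) then yields
$$\varphi(Q)D\varphi(Q)=D\varphi(Q)\quad\text{for all } D\in\mathcal{J}(\mathcal{L},\psi^{0}(N)).$$
Write $M=\psi^{0}(N)$ and $P=\varphi(Q)$; both are nontrivial.

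\textbf{Step 1: $P^{\ast}M^{\perp}=0$.} Suppose not, and pick $f\in M^{\perp}$ with $P^{\ast}f\neq0$. For every $x\in M$ we have $x\otimes f\in\mathcal{J}(\mathcal{L},M)$. Inserting $D=x\otimes f$ into the identity above gives $\langle\cdot,P^{\ast}f\rangle Px=\langle\cdot,P^{\ast}f\rangle x$, so $Px=x$ for all $x\in M$. Since $f$ may range over all of $M^{\perp}$, the same identity also constrains $P^{\ast}$ on $M^{\perp}$.

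To reach a contradiction I would mirror this on the original side using $\psi^{-1}$, which is also a Lie automorphism with $(\psi^{-1})^{0}=(\psi^{0})^{-1}$. Choose an element of $\mathcal{J}(\mathcal{L},N)$ that commutes with $Q$; such elements exist because $Q$ has range in $N$. Transport it to an element of $\mathcal{J}(\mathcal{L},M)$ that commutes with $P$. Since $P$ acts as the identity on $M$, this commutation forces that element to vanish, which contradicts the bijectivity of $\phi$.

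A cleaner alternative is to exploit Lemma~\ref{xi-f0}: $[\xi\otimes f_{0},Q]=-Q\xi\otimes f_{0}$ relates $Q$ to the distinguished idempotents $\xi\otimes f_{0}$, which $\varphi$ maps to idempotents of the same type. Together with Lemma~\ref{3-cases}, applied to $P$ viewed in $\Omega_{2}(\mathcal{L},N\vee P_{\xi})$, this should let us exclude the cases where the range of $P$ involves $\xi$. The goal is to land in Form~(1) relative to $M$ rather than $M\vee P_{\xi}$.

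\textbf{Step 2: $PM\neq0$.} If $PM=0$ and also $P^{\ast}M^{\perp}=0$, then $P\mathcal{H}\subset M$ and $PM=0$ give $P^{2}=0$. Since $P$ is an idempotent, $P=0$, which is a contradiction.

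I expect Step~1 to be the main obstacle: ruling out the ``$P$ is the identity on $M$'' alternative. My first attempt would be the commutant argument. The set $\{C\in\mathcal{J}(\mathcal{L},N):[Q,C]=0\}$ equals $\{C:QC=0\}$, and this is nonzero because $Q\mathcal{H}\subset N\neq I$ leaves room in $N^{\perp}$-directions. In the alternative case, the corresponding set $\{D\in\mathcal{J}(\mathcal{L},M):[P,D]=0\}$ would be too small. Comparing these two sets through the bijection $\phi$ should give the contradiction.
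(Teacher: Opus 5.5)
Your transported identity $\varphi(Q)D\varphi(Q)=D\varphi(Q)$ for $D\in\mathcal{J}(\mathcal{L},\psi^{0}(N))$ and your Step 2 are correct. Step 1, however, carries the entire content of the proposition and is not proved, and both routes you sketch rest on false claims. The first problem concerns which rank-one operators are available. For $0\neq x\in M$, the operator $x\otimes f$ lies in ${\rm Alg}\mathcal{L}$, and hence in $\mathcal{J}(\mathcal{L},M)$, only if $f\perp M\vee P_{\xi}$: invariance of $P_{\xi}$ needs $\langle\xi,f\rangle x\in\mathbb{C}\xi$, while $\xi\notin M$. So $f$ cannot range over all of $M^{\perp}$. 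Your identity therefore gives only the dichotomy ``$P|_{M}=1_{M}$ or $P^{\ast}(M\vee P_{\xi})^{\perp}=0$'', which is exactly Lemma \ref{3-cases}. It is blind to the $\xi$-direction: any $P=\tilde{Q}+\xi\otimes h$ with $\tilde{Q}\mathcal{H}\subset M$ satisfies $DP=0=PDP$ trivially. Hence it can never exclude a nonzero $h\in(I_{-}^{\mathcal{N}})^{\perp}$, i.e.\ range in $M\vee P_{\xi}$ but not in $M$. Excluding this needs a relation that sees $\xi$. Since $Q\xi\in\mathbb{C}\xi\cap N=\{0\}$ and $Q^{\ast}g=0$ for $g\in(I_{-}^{\mathcal{N}})^{\perp}$, one has $[Q,\xi\otimes g]=0$; your $-Q\xi\otimes f_{0}$ is in fact $0$. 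Remark \ref{xi-eta} gives $\phi(\xi\otimes g)=\xi\otimes\tilde{g}$ for $g\in(I_{-}^{\mathcal{N}}\vee P_{\xi})^{\perp}$. A nonzero $h$ forces $\langle\xi,h\rangle=1$ by idempotence, and then $[P,\xi\otimes\tilde{g}]=\xi\otimes\tilde{g}\neq0$, a contradiction. This is how the paper finishes Form (1). Commuting with a single $\xi\otimes\tilde{f}_{0}$ from Lemma \ref{xi-f0} only yields $h=\tilde{f}_{0}$, not $h=0$.

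The second problem is that no contradiction can come from ``$P$ is the identity on $M$'' alone, because the conclusion itself often has that property. If $Q|_{N}=1_{N}$ (the special idempotents with $Q_{11}=1$ that the paper treats first), then $[Q,C]=C$ for all $C\in\mathcal{J}(\mathcal{L},N)$. This transports to $[P,D]=D$, and for $P$ with range in $M$ it forces $P|_{M}=1_{M}$. For such $Q$ your commutant argument also breaks down: $\{C\in\mathcal{J}(\mathcal{L},N):[Q,C]=0\}=\{C:QC=0\}=\{0\}$, so the element you want to transport does not exist. Moreover, when $P|_{M}=1_{M}$, the condition $[P,D]=0$ means $D=DP$, which has nonzero solutions in Form (2), so nothing forces that element to vanish. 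What must be refuted is $P|_{M}=1_{M}$ together with $P^{\ast}(M\vee P_{\xi})^{\perp}\neq0$, i.e.\ Forms (2) and (3). For $Q|_{N}=1_{N}$ this is quick: $[P,D]=D$ modulo scalars, together with $PD=D$, gives $DP=0$ for every $D\in\mathcal{J}(\mathcal{L},M)$, i.e.\ $P^{\ast}(M\vee P_{\xi})^{\perp}=0$. For $Q|_{N}\neq1_{N}$ you need an additional device, and nothing in your proposal plays this role. The paper uses one: it subtracts $\varphi(Q')$ for a special idempotent $Q'$ with $Q'|_{N}=1_{N}$, whose image is already known. It then tests the difference against $\phi(x\otimes f)$ for $x\otimes f\in\mathcal{J}(\mathcal{L},N)\cap\mathcal{J}(\mathcal{L},I_{-}^{\mathcal{N}})$.
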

\begin{proof}
Let $0<N<I_{-}^{\mathcal{N}}$. With the decomposition $\mathcal{H}=N\oplus (I_{-}^{\mathcal{N}}\ominus N)\oplus (I_{-}^{\mathcal{N}})^{\perp}$, we first consider the special idempotent
$$
Q=\left(\begin{array}{ccc}
1&Q_{12}&Q_{13}\\
0&0&0\\
0&0&0
\end{array}\right)\in\Omega_{2}(\mathcal{L}, N).
$$
By Lemma \ref{3-cases}, $\varphi(Q)$ can take one of three forms. Suppose $\varphi(Q)$ satisfies Form (2). For any $f\in(I_{-}^{\mathcal{N}})^{\perp}$, we have $[Q,\xi\otimes f]=0$. By Remark \ref{xi-eta} and Lemma \ref{xi-f0}, it follows that $[\varphi(Q),\xi\otimes f]=0$ for all $f\in(I_{-}^{\mathcal{N}})^{\perp}$. Since $\varphi(Q)\xi=\xi$, we obtain $\varphi^{\ast}(Q)(I_{-}^{\mathcal{N}})^{\perp}=1$. Hence, under the decomposition $\mathcal{H} = \psi^{0}(N) \oplus (I_{-}^{\mathcal{N}} \ominus \psi^{0}(N)) \oplus (I_{-}^{\mathcal{N}})^{\perp}$, we may assume
$$
\varphi(Q)=\left(\begin{array}{ccc}
1&\tilde{Q}_{12}&\tilde{Q}_{13}\\
0&\tilde{Q}_{22}&\tilde{Q}_{23}\\
0&0&1
\end{array}\right).
$$
Now consider $[Q,C]$ for any $C\in\mathcal{J}(\mathcal{L},I_{-}^{\mathcal{N}})$. This implies $\tilde{Q}_{22}=1$, and therefore $\varphi(Q)=I$. Consequently $Q=I$, which is a contradiction.

If $\varphi(Q)$ satisfies Form (3), then, under the decomposition $\mathcal{H} = \psi^{0}(N) \oplus (I_{-}^{\mathcal{N}} \ominus \psi^{0}(N)) \oplus (I_{-}^{\mathcal{N}})^{\perp}$, we have
$$
\varphi(Q)=\left(\begin{array}{ccc}
1&\tilde{Q}_{12}&\tilde{Q}_{13}\\
0&\tilde{Q}_{22}(\neq0)&Q_{23}\\
0&0&0
\end{array}\right).
$$
Since $[Q,C]=C$ for all $C\in\mathcal{J}(\mathcal{L},N)$, it follows Corollary \ref{J(L,N)} that $[\varphi(Q),\tilde{C}]=\tilde{C}$ for all $\tilde{C}\in\mathcal{J}(\mathcal{L},\psi^{0}(N))$. This implies 
$\tilde{Q}_{22}=0$. This is a contradiction.

Therefore, $\varphi(Q)$ must be of Form (1), under the decomposition $\mathcal{H} = \psi^{0}(N) \oplus (I_{-}^{\mathcal{N}} \ominus \psi^{0}(N)) \oplus (I_{-}^{\mathcal{N}})^{\perp}$, we have
$$
\varphi(Q)
=\left(\begin{array}{ccc}
\tilde{Q}_{11}&\tilde{Q}_{12}&\tilde{Q}_{13}\\
0&0&0\\
0&0&0
\end{array}\right)+\xi\otimes f, \ {\rm{for \ some \ }}f\in(I_{-}^{\mathcal{N}})^{\perp}.
$$
Since $[Q,C]=C$ for all $C\in\mathcal{J}(\mathcal{L},N)$, it follows 
$$
\varphi(Q)
=\left(\begin{array}{ccc}
1&\tilde{Q}_{12}&\tilde{Q}_{13}\\
0&0&0\\
0&0&0
\end{array}\right)+\xi\otimes f.
$$

If there exists an element $Q$ such that the term $\xi \otimes f$ in the expression $\varphi(Q) = \tilde{Q} + \xi \otimes f$ is nonzero, then $\langle \xi, f \rangle = 1$. However, since $[Q,\xi\otimes g]=0$ for all $g\in (I_{-}^{\mathcal{N}}\vee P_{\xi})^{\perp}$, but $[\varphi(Q),\xi\otimes g]=\xi\otimes g\neq 0$, we obtain a contradiction.
Therefore, the mapping $\varphi$ acts as
$$
\left(\begin{array}{ccc}
1&Q_{12}&Q_{13}\\
0&0&0\\
0&0&0
\end{array}\right)
\mapsto\left(\begin{array}{ccc}
1&\tilde{Q}_{12}&\tilde{Q}_{13}\\
0&0&0\\
0&0&0
\end{array}\right).
$$

Now we consider a general idempotent in $\Omega_{2}(\mathcal{L}, N)$. Let
$$
Q=\left(\begin{array}{ccc}
Q_{11}(\neq 1)&Q_{12}&Q_{13}\\
0&0&0\\
0&0&0
\end{array}\right).
$$
By Lemma \ref{3-cases}, $\varphi(Q)$ can take one of three forms. Similarly, we can prove that the Form (2) does not exists. 
Now if $\varphi(Q)$ satisfies Form (3), then
$$
\varphi(Q)=\left(\begin{array}{ccc}
1&\tilde{Q}_{12}&\tilde{Q}_{13}\\
0&\tilde{Q}_{22}(\neq0)&\tilde{Q}_{23}\\
0&0&0
\end{array}\right).
$$
Thus there exists an idempotent
$$
Q'=\left(\begin{array}{ccc}
1&*&*\\
0&0&0\\
0&0&0
\end{array}\right) {\rm{ \ such \ that \ }}
\varphi(Q)-\varphi(Q')=\left(\begin{array}{ccc}
0&0&0\\
0&\tilde{Q}_{22}&\tilde{Q}_{23}\\
0&0&0
\end{array}\right).
$$
Since $Q_{11}\neq 1$, there exists some $x\in\psi^{0}(N)$ such that $(Q-Q')x\neq 0$, and hence $(Q-Q')(x\otimes f)\neq0$ for any $f\in (I_{-}^{\mathcal{N}}\vee P_{\xi})^{\perp}$. Since $x\otimes f\in\mathcal{J}(\mathcal{L},N)\cap \mathcal{J}(\mathcal{L},I_{-}^{\mathcal{N}})$, we have $\phi(x\otimes f)\in\mathcal{J}(\mathcal{L},\psi^{0}(N))\cap \mathcal{J}(\mathcal{L},I_{-}^{\mathcal{N}})$. Therefore, we have $[Q-Q', x\otimes f]\neq0$, but $[\varphi(Q)-\varphi(Q'), \psi(x\otimes f)]=0$, which leads to a contradiction.

Therefore, $\varphi(Q)$ satisfies Form (1). Thus, under the decomposition $\mathcal{H}= \psi^{0}(N) \oplus (I_{-}^{\mathcal{N}} \ominus \psi^{0}(N)) \oplus (I_{-}^{\mathcal{N}})^{\perp}$, we have
$$
\varphi(Q)
=\left(\begin{array}{ccc}
\tilde{Q}_{11}&\tilde{Q}_{12}&\tilde{Q}_{13}\\
0&0&0\\
0&0&0
\end{array}\right)+\xi\otimes f, \ {\rm{for \ some \ }} f\in (I_{-}^{\mathcal{N}})^{\perp}.
$$
Similarly, we can prove that $\xi\otimes f=0$, hence the mapping $\varphi$ acts as
$$
\left(\begin{array}{ccc}
Q_{11}&Q_{12}&Q_{13}\\
0&0&0\\
0&0&0
\end{array}\right)
\mapsto\left(\begin{array}{ccc}
\tilde{Q}_{11}&\tilde{Q}_{12}&\tilde{Q}_{13}\\
0&0&0\\
0&0&0
\end{array}\right)\in \Omega_{2}(\mathcal{L}, \psi^{0}(N)).
$$

We begin by assuming that $0<N<I_{-}^{\mathcal{N}}$. Note that the above proof remains valid when $N=I_{-}^{\mathcal{N}}$.
\end{proof}
\begin{corollary}\label{=J(L,E)}
Let $N\in\mathcal{N}^{0}$. Then $\psi(C)=\phi(C)$ for all $C\in\mathcal{J}(\mathcal{L},N)$, and $\psi(\xi\otimes f)=\phi(\xi\otimes f)$ for all $f\in (I^{\mathcal{N}}_{-}\vee P_{\xi})^{\perp}$, i.e., $\psi(D)=\phi(D)$ for all $D\in \mathcal{J}(\mathcal{L},N\vee P_{\xi})$.
\end{corollary}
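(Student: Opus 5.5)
The goal is to show that the scalar part $\psi(D)-\phi(D)\in\mathbb{C}I$ vanishes for every $D\in\mathcal{J}(\mathcal{L},N\vee P_{\xi})$. Write $\psi(D)=\phi(D)+\lambda(D)I$. The plan is to express each such $D$ as a commutator $[Q,D]$ with $Q$ an idempotent, and then apply $\psi$. Since $\psi([Q,D])=[\psi(Q),\psi(D)]=[\varphi(Q),\phi(D)]$, and a commutator of elements of ${\rm{Alg}}\mathcal{L}$ is compared with $\phi(D)$ which lies in $\mathcal{J}(\mathcal{L},\cdot)$, the scalar part is forced to be zero.

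\emph{Elements of $\mathcal{J}(\mathcal{L},N)$.} Take $C\in\mathcal{J}(\mathcal{L},N)$ and an idempotent $Q\in\Omega_{2}(\mathcal{L},N)$ of the special form used in Proposition \ref{Omega1}, with $Q_{11}=1$ (that is, $Q=N+(\text{off-diagonal row})$, say $Q=N$ itself). Then $CQ=0$ and $QC=C$, so $[Q,C]=C$. Applying $\psi$ gives
$$\psi(C)=[\varphi(Q),\phi(C)].$$
By Proposition \ref{Omega1}, $\varphi(Q)$ has the block form $\left(\begin{array}{ccc}1&*&*\\0&0&0\\0&0&0\end{array}\right)$ relative to $\psi^{0}(N)$. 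By Corollary \ref{J(L,N)}, $\phi(C)\in\mathcal{J}(\mathcal{L},\psi^{0}(N))$. A direct block computation then gives $[\varphi(Q),\phi(C)]=\phi(C)$. Hence $\psi(C)=\phi(C)$.

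\emph{Elements $\xi\otimes f$.} Take $f\in(I_{-}^{\mathcal{N}}\vee P_{\xi})^{\perp}$, and choose $f_{0}\in(I_{-}^{\mathcal{N}})^{\perp}$ with $\langle\xi,f_{0}\rangle=1$. The computation already made in the paper gives $[\xi\otimes f_{0},\xi\otimes f]=\xi\otimes f$. Applying $\psi$ and using Lemma \ref{xi-f0} and Remark \ref{xi-eta},
$$\psi(\xi\otimes f)=[\xi\otimes\tilde f_{0},\xi\otimes\tilde f],$$
where $\phi(\xi\otimes f)=\xi\otimes\tilde f$. Since $\langle\xi,\tilde f_{0}\rangle=1$ and $\langle\xi,\tilde f\rangle=0$, this commutator equals $\xi\otimes\tilde f=\phi(\xi\otimes f)$.

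\emph{General elements.} By Remark \ref{CLI}, every $D\in\mathcal{J}(\mathcal{L},N\vee P_{\xi})$ decomposes as $C+\xi\otimes f$ with $C\in\mathcal{J}(\mathcal{L},N)$. Both summands lie in $\mathcal{J}(\mathcal{L},N\vee P_{\xi})$, so $\phi$ is additive on this sum, and the two cases above combine by linearity of $\psi$ to give $\psi(D)=\phi(D)$.

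\emph{Main obstacle.} The delicate step is the first case, namely checking that the idempotent $Q$ really lies in $\Omega_{2}(\mathcal{L},N)$ and that $[\varphi(Q),\phi(C)]=\phi(C)$ holds exactly, with no scalar term. This depends on Proposition \ref{Omega1} ruling out the extra $\xi\otimes f$ summand in $\varphi(Q)$ and forcing $\tilde Q_{11}=1$ (the special case treated there). I would also verify separately the endpoint $N=I_{-}^{\mathcal{N}}$, which that proposition notes remains valid.
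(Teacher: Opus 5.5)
Your proof is correct and follows the paper's own argument. The first part uses $[Q,C]=C$ for an idempotent $Q\in\Omega_2(\mathcal{L},N)$ acting as the identity on $N$, together with Proposition~\ref{Omega1} and Corollary~\ref{J(L,N)}. The second part uses $[\xi\otimes f_0,\xi\otimes f]=\xi\otimes f$, together with Lemma~\ref{xi-f0} and Remark~\ref{xi-eta}. Your additivity step only makes the paper's ``i.e.'' explicit.

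The one slip is the aside ``say $Q=N$ itself''. The projection $N$ is not in ${\rm{Alg}}\mathcal{L}$: since $\xi$ is separating for $\mathcal{N}''$ and $N\neq 0,I$, we have $N\xi\notin\mathbb{C}\xi$. You must instead take $Q$ with $Q\xi=0$, for example $E_N$ from Step~2 of the main theorem. This changes nothing else in your argument.
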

\begin{proof}
For any $C\in\mathcal{J}(\mathcal{L},N)$, we have $C=[Q,C]$ for all $
Q=\left(\begin{array}{ccc}
1&*&*\\
0&0&0\\
0&0&0
\end{array}\right)
$, it follows
$$
\psi(C)=[\psi(Q),\psi(C)]=[\varphi(Q),\phi(C)]=\phi(C).
$$

For any $\xi\otimes f\in\mathcal{J}(\mathcal{L},N\vee P_{\xi})$, we have $\xi\otimes f=[\xi\otimes f_{0},\xi\otimes f]$ for all $f_{0}\in (I^{\mathcal{N}}_{-})^{\perp}$ with $\langle \xi,f_{0}\rangle=1$, it follow from Remark \ref{xi-eta} and Lemma \ref{xi-f0} that
$$
\psi(\xi\otimes f)=[\psi(\xi\otimes f_{0}),\psi(\xi\otimes f)]=[\varphi(\xi\otimes f_{0}),\phi(\xi\otimes f)]=\phi(\xi\otimes f).
$$
\end{proof}

\subsection{ The case that $0_{+}^{\mathcal{N}}\neq 0$}
\begin{remark}\label{traverse}
Since $\mathcal{N}$ is a nest with at least four projections, we obtain $0_{+}^{\mathcal{N}}\neq I_{-}^{\mathcal{N}}$. By Theorem \ref{J(L,E)} and Remark \ref{xi-eta}, under the decomposition $\mathcal{H} = 0_{+}^{\mathcal{N}} \oplus (I_{-}^{\mathcal{N}} \ominus 0_{+}^{\mathcal{N}}) \oplus (I_{-}^{\mathcal{N}})^{\perp}$, we have
$$
\phi:\left(\begin{array}{ccc}
0&0&C_{13}\\
0&0&0\\
0&0&0
\end{array}\right)+\xi\otimes f\mapsto
\left(\begin{array}{ccc}
0&0&\tilde{C}_{13}\\
0&0&0\\
0&0&0
\end{array}\right)+\xi\otimes \tilde{f},
$$
where $f,\tilde{f}\in(I_{-}^{\mathcal{N}}\vee P_{\xi})^{\perp}$.

When $\psi^{0}$ is order-preserving, we have
$$
\phi:\left(\begin{array}{ccc}
0&C_{12}&C_{13}\\
0&0&0\\
0&0&0
\end{array}\right)\mapsto
\left(\begin{array}{ccc}
0&\tilde{C}_{12}&\tilde{C}_{13}\\
0&0&0\\
0&0&0
\end{array}\right)+\xi\otimes \tilde{f}, \ {\rm{for \ some \ }} \tilde{f}\in(I_{-}^{\mathcal{N}}\vee P_{\xi})^{\perp}.
$$
Although it is not known who the image of $C\in\mathcal{J}(\mathcal{L},N)$ is, according to the mapping above, when $C$ is traversed, the corresponding image's position in $\tilde{C}_{12}$ will always be traversed.

When $\psi^{0}$ is order-reversing, we have $\psi^{0}(0_{+}^{\mathcal{N}})=I_{-}^{\mathcal{N}}$, and therefore
$$
\phi:\left(\begin{array}{ccc}
0&C_{12}&C_{13}\\
0&0&0\\
0&0&0
\end{array}\right)\in\mathcal{J}(\mathcal{L},0_{+}^{\mathcal{N}})\mapsto
\left(\begin{array}{ccc}
0&0&\tilde{C}_{13}\\
0&0&\tilde{C}_{23}\\
0&0&0
\end{array}\right)+\xi\otimes \tilde{f}\in\mathcal{J}(\mathcal{L},I_{-}^{\mathcal{N}}\vee P_{\xi}).
$$
Similarly, when $C$ is traversed, the corresponding image's position in $\tilde{C}_{23}$ will always be traversed.
\end{remark}

For convenience, let $P_{0}=\xi\otimes f_{0}$ with $f_{0}\in (I_{-}^{\mathcal{N}})^{\perp}$ and $\langle \xi, f_{0} \rangle = 1$. Next, we first consider the structure of $\varphi(P_{0})$. By Lemma \ref{3-cases}, $\varphi(P_{0})$ can take one of three forms. Now we consider $\varphi(P_{0})$ of three cases.

(1) The following lemma shows that $\varphi(P_{0})$ does not satisfy Form (3).

\begin{lemma}\label{not3}
There not exists some $N\in\mathcal{N}^{0}$ such that $\varphi(P_{0})$ satisfies Form (3).
\end{lemma}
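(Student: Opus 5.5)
We argue by contradiction. Suppose that for some $N\in\mathcal{N}^{0}$ the idempotent $\varphi(P_{0})$ has Form (3) with respect to $\psi^{0}(N)$. Then $\varphi(P_{0})\xi=0$, $\varphi(P_{0})x=x$ for all $x\in\psi^{0}(N)$, and $\varphi(P_{0})^{\ast}(I_{-}^{\mathcal{N}}\vee P_{\xi})^{\perp}=0$. We play these facts against the commutation relations of $P_{0}=\xi\otimes f_{0}$ with the rank-one elements $\xi\otimes f$. The paper already does this in the case $0_{+}^{\mathcal{N}}=0$; the new work is to control the extra block $T_{13}$ from Remark \ref{xi-eta}.

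\textbf{Step 1.} By direct computation, $[P_{0},\xi\otimes f]=\xi\otimes f$ for every $f\in(I_{-}^{\mathcal{N}}\vee P_{\xi})^{\perp}$. Applying $\psi$ and passing to $\varphi$ and $\phi$ gives
\[
[\varphi(P_{0}),\phi(\xi\otimes f)]=\phi(\xi\otimes f).
\]
By Remark \ref{xi-eta}, $\phi(\xi\otimes f)=T+\xi\otimes g$. Here $g\in(I_{-}^{\mathcal{N}}\vee P_{\xi})^{\perp}$, and $T$ has only a $(1,3)$ block in the decomposition $0_{+}^{\mathcal{N}}\oplus(I_{-}^{\mathcal{N}}\ominus 0_{+}^{\mathcal{N}})\oplus(I_{-}^{\mathcal{N}})^{\perp}$.

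\textbf{Step 2.} Using $\varphi(P_{0})\xi=0$ and $\varphi(P_{0})^{\ast}g=0$, the relation becomes
\[
\varphi(P_{0})T-T\varphi(P_{0})-\xi\otimes g=T+\xi\otimes g.
\]
Next use the structure of $T$. Its range lies in $0_{+}^{\mathcal{N}}\subseteq\psi^{0}(N)$, so $\varphi(P_{0})T=T$. It vanishes on $I_{-}^{\mathcal{N}}$, and the $(3,3)$ block of $\varphi(P_{0})$ in Form (3) is zero, so $T\varphi(P_{0})=0$. The identity then reduces to $-\xi\otimes g=\xi\otimes g$, hence $g=0$. Thus $\phi(\xi\otimes f)=T$ has range inside $0_{+}^{\mathcal{N}}$.

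\textbf{Step 3.} Use bijectivity of $\phi$. By Corollary-type reasoning through $\psi^{-1}$, the set $\{\xi\otimes f\}$ together with its $\phi$-images spans the part of $\mathcal{J}(\mathcal{L},N'\vee P_{\xi})$ that is common to all $N'$. So $\phi$ would map the subspace $\{\xi\otimes f\}$ into $\mathcal{J}(\mathcal{L},0_{+}^{\mathcal{N}})\cap\mathcal{J}(\mathcal{L},I_{-}^{\mathcal{N}})$, the operators with only a $(1,3)$ block.

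To reach a contradiction, pick any $D$ with only a $(1,3)$ block. Then $\varphi(P_{0})D=D$ and $D\varphi(P_{0})=0$, so $[\varphi(P_{0}),D]=D$. Now compare with the preimage side:
\begin{itemize}
\item The preimage $C=\phi^{-1}(D)$ lies in $\mathcal{J}(\mathcal{L},M)+\{\xi\otimes f\}$ for a suitable $M$.
\item $[P_{0},C]$ kills the $\mathcal{J}(\mathcal{L},M)$ part, since $P_{0}C=0=CP_{0}$ for those elements, and keeps only the $\xi\otimes f$ part.
\end{itemize}
Choosing $D=\phi(C)$ with $C\in\mathcal{J}(\mathcal{L},0_{+}^{\mathcal{N}})\cap\mathcal{J}(\mathcal{L},I_{-}^{\mathcal{N}})$ nonzero, which is possible by Remark \ref{traverse}, gives $[P_{0},C]=0$. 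Hence $[\varphi(P_{0}),\phi(C)]=0$, i.e. $\phi(C)$ has no component on which $\varphi(P_{0})$ acts as $1$ from the left. Remark \ref{traverse} says $\phi(C)$ is a $(1,3)$-block operator plus some $\xi\otimes\tilde f$. The $(1,3)$ part then gives $[\varphi(P_{0}),\phi(C)]=\tilde C_{13}$-part $-\xi\otimes\tilde f$, which forces $\tilde C_{13}=0$ and $\tilde f=0$. So $\phi(C)=0$, contradicting injectivity.

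\textbf{Main obstacle.} The hardest part is the bookkeeping in Step 3: making sure $\phi(C)$ for $C\in\mathcal{J}(\mathcal{L},0_{+}^{\mathcal{N}})\cap\mathcal{J}(\mathcal{L},I_{-}^{\mathcal{N}})$ really has the traversed $(1,3)$ form, including in the order-reversing case, where Remark \ref{traverse} has to be used with $\psi^{0}(0_{+}^{\mathcal{N}})=I_{-}^{\mathcal{N}}$. If the order-reversing case causes trouble, I would first try to rule it out directly. The approach would be to show, via Step 2 applied at $N=I_{-}^{\mathcal{N}}$ together with the closing observation of Lemma \ref{3-cases}, that Form (3) cannot occur there either.
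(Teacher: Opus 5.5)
Your Steps 1--2 are sound. They match the first part of the paper's argument, which also shows that $\phi(\xi\otimes f)$ is a pure $(1,3)$-block. There is, however, a slip in Step 2. Under Form (3) you have both $\varphi(P_{0})\xi=0$ and $\varphi(P_{0})^{\ast}g=0$, so $[\varphi(P_{0}),\xi\otimes g]=0$, not $-\xi\otimes g$. The identity really reads $T=T+\xi\otimes g$, which still gives $g=0$, so Step 2 survives.

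The same miscalculation produces your contradiction in Step 3, and there it is fatal. For a $(1,3)$-block $C$, write $\phi(C)=\tilde C+\xi\otimes\tilde f$. Then $[\varphi(P_{0}),\phi(C)]=\tilde C$, so $[P_{0},C]=0$ only forces $\tilde C=0$. This leaves $\phi(C)=\xi\otimes\tilde f$, which is nonzero in general, so injectivity is not violated.

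Combined with Step 2 and your preimage bullets, all you have shown is that $\phi$ interchanges the two ``corner'' spaces $\{\xi\otimes f\}$ and $\mathcal{J}(\mathcal{L},0_{+}^{\mathcal{N}})\cap\mathcal{J}(\mathcal{L},I_{-}^{\mathcal{N}})$. This is exactly the intermediate stage the paper reaches, and it is not contradictory on its own. Commutation with $P_{0}$ sees nothing of the $\xi\otimes\tilde f$-components. No dimension count helps either: for $\dim 0_{+}^{\mathcal{N}}=1$ the two spaces are naturally isomorphic, and $\phi$ is only known to be linear.

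The missing idea is to leave the corner and use the full ideals; I describe the order-preserving case. Work in the decomposition $0_{+}^{\mathcal{N}}\oplus(I_{-}^{\mathcal{N}}\ominus 0_{+}^{\mathcal{N}})\oplus(I_{-}^{\mathcal{N}})^{\perp}$. This is legitimate because $\varphi(P_{0})$ is the identity on $0_{+}^{\mathcal{N}}\subseteq\psi^{0}(N)$.

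\begin{enumerate}
\item Take $D_{1}=\bar D_{1}+\xi\otimes f_{1}\in\mathcal{J}(\mathcal{L},0_{+}^{\mathcal{N}}\vee P_{\xi})$ with $\bar D_{1}\in\mathcal{J}(\mathcal{L},0_{+}^{\mathcal{N}})$. Then $[P_{0},D_{1}]=\xi\otimes f_{1}$, whose $\psi$-image is a $(1,3)$-block plus a scalar by your Step 2.
\item Here $\phi$ maps $\mathcal{J}(\mathcal{L},0_{+}^{\mathcal{N}}\vee P_{\xi})$ onto itself. So the $(1,2)$-block $\tilde D_{12}$ of $\phi(D_{1})$ runs over all of $\mathcal{B}(I_{-}^{\mathcal{N}}\ominus 0_{+}^{\mathcal{N}},0_{+}^{\mathcal{N}})$.
\item Comparing $(1,2)$-entries of $[\varphi(P_{0}),\phi(D_{1})]$ gives $\tilde D_{12}(1-\tilde P_{22})=0$, hence $\tilde P_{22}=1$. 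Idempotency then gives $\tilde P_{12}=0$, so $\varphi(P_{0})$ is the identity on $I_{-}^{\mathcal{N}}$.
\item Now take $E_{1}=\bar E_{1}+\xi\otimes g_{1}\in\mathcal{J}(\mathcal{L},I_{-}^{\mathcal{N}}\vee P_{\xi})$. Then $[\varphi(P_{0}),\phi(E_{1})]=\tilde E_{1}$, the $\mathcal{J}(\mathcal{L},I_{-}^{\mathcal{N}})$-part of $\phi(E_{1})$. Its $(2,3)$-block is arbitrary; this uses $0_{+}^{\mathcal{N}}\neq I_{-}^{\mathcal{N}}$.
\item But this commutator must equal $\psi(\xi\otimes g_{1})$, which is a $(1,3)$-block. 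That is the contradiction.
\end{enumerate}

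In the order-reversing case the same two-step argument goes through with $0_{+}^{\mathcal{N}}$ and $I_{-}^{\mathcal{N}}$ interchanged, because $\psi^{0}$ swaps them. So the real difficulty is not the order-reversing bookkeeping you flag. It is that the corner subspace alone cannot yield a contradiction.
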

\begin{proof}
Suppose there exists $N\in\mathcal{N}^{0}$ such that $\varphi(P_{0})$ is of Form (3). If $\psi^{0}(N)=I_{-}^{\mathcal{N}}$, $\varphi(P_{0})$ cannot be of Form (3). Hence we may assume $\psi^{0}(N)\neq I_{-}^{\mathcal{N}}$. With respect to the decomposition $\mathcal{H}=\psi^{0}(N)\oplus (I_{-}^{\mathcal{N}}\ominus \psi^{0}(N))\oplus (I_{-}^{\mathcal{N}})^{\perp}$, we obtain
$$
\varphi(P_{0})=
\left(\begin{array}{ccc}
1&\tilde{P}_{12}&\tilde{P}_{13}\\
0&\tilde{P}_{22}&\tilde{P}_{23}\\
0&0&0
\end{array}\right).
$$

Firstly, we consider the case where $\psi^{0}$ is order-preserving. Since $\varphi(P_{0})$ is of Form (3) with  respect to $\psi^{0}(N)$, the same holds for $0_{+}^{\mathcal{N}}$. Therefore, we may assume without loss of generality that $N=0_{+}^{\mathcal{N}}< I_{-}^{\mathcal{N}}$.

For any $f\in(I_{-}^{\mathcal{N}}\vee P_{\xi})^{\perp}$, we have $[P_{0},\xi\otimes f]=\xi\otimes f$. Then, by Remark \ref{traverse}, there is some $C=\left(\begin{array}{ccc}
0&0&C_{13}\\
0&0&0\\
0&0&0
\end{array}\right)\in\mathcal{J}(\mathcal{L},0_{+}^{\mathcal{N}})$ and $\tilde{f}\in (I_{-}^{\mathcal{N}}\vee P_{\xi})^{\perp}$ such that $\phi(\xi\otimes f)=C+\xi\otimes \tilde{f}$, thus $[\varphi(P_{0}),C+\xi\otimes \tilde{f}]=C+\xi\otimes \tilde{f}$. This yields
$$0=[\varphi(P_{0}),\xi\otimes \tilde{f}]=\xi\otimes \tilde{f}$$
and hence the mapping $\phi$ (resp. $\psi$) acts as
\begin{align}\label{xi to *}
\xi\otimes f\mapsto C=\left(\begin{array}{ccc}
0&0&C_{13}\\
0&0&0\\
0&0&0
\end{array}\right).
\end{align}
For any
$
C=\left(\begin{array}{ccc}
0&0&C_{13}\\
0&0&0\\
0&0&0
\end{array}\right)\in \mathcal{J}(\mathcal{L},0_{+}^{\mathcal{N}}),
$
we have $[P_{0},C]=0$, which implies $[\varphi(P_{0}),\phi(C)]=0$. By Remark \ref{traverse}, we obtain
$$
\phi(C)=
\left(\begin{array}{ccc}
0&0&\tilde{C}_{13}\\
0&0&0\\
0&0&0
\end{array}\right)+\xi\otimes f',  \ {\rm{for \ some}} \ f'\in (I_{-}^{\mathcal{N}}\vee P_{\xi})^{\perp},
$$
which we denote as $\tilde{C} + \xi \otimes f'$. Since $[\varphi(P_{0}),\phi(C)]=0$, we have $[\varphi(P_{0}),\tilde{C}]=0$, and hence $\tilde{C}=0$. Therefore, the mapping $\phi$ acts as
\begin{align}\label{* to xi}
\left(\begin{array}{ccc}
0&0&C_{13}\\
0&0&0\\
0&0&0
\end{array}\right)\in\mathcal{J}(\mathcal{L},0_{+}^{\mathcal{N}})\mapsto\xi\otimes f', \ {\rm{for \ some \ }}f'\in (I_{-}^{\mathcal{N}}\vee P_{\xi})^{\perp}.
\end{align}

For any $D_{1}\in\mathcal{J}(\mathcal{L},0_{+}^{\mathcal{N}}\vee P_{\xi})$, there exists $\bar{D}_{1}\in \mathcal{J}(\mathcal{L},0_{+}^{\mathcal{N}})$ and $f_{1}\in (I_{-}^{\mathcal{N}}\vee P_{\xi})^{\perp}$ such that $D_{1}=\bar{D}_{1}+\xi\otimes f_{1}$, and we have $[P_{0},D_{1}]=\xi\otimes f_{1}$. This implies $[\varphi(P_{0}),\tilde{D}_{1}+\xi\otimes \tilde{f}_{1}]=\psi(\xi\otimes f_{1})$ for all $\tilde{D}_{1}\in \mathcal{J}(\mathcal{L},0_{+}^{\mathcal{N}})$ and $\tilde{f}_{1}\in (I_{-}^{\mathcal{N}}\vee P_{\xi})^{\perp}$, where
$$
\tilde{D}_{1}=\left(\begin{array}{ccc}
0&\tilde{D}_{12}&\tilde{D}_{13}\\
0&0&0\\
0&0&0
\end{array}\right) \ {\rm{and}} \
\psi(\xi\otimes f_{1})=\phi(\xi\otimes f_{1})=\left(\begin{array}{ccc}
0&0&\ast\\
0&0&0\\
0&0&0
\end{array}\right).
$$
It follows that $[\varphi(P_{0}),\tilde{D}_{1}]=\psi(\xi\otimes f_{1})$, and hence $\tilde{D}_{12}-\tilde{D}_{12}\tilde{P}_{22}=0$ for all $\tilde{D}_{12}\in\mathcal{B}((I_{-}^{\mathcal{N}}\ominus 0_{+}^{\mathcal{N}}),0_{+}^{\mathcal{N}})$, which implies $\tilde{P}_{22}=1$. Since $\varphi({P_{0}})$ is an idempotent, we conclude that
$$
\varphi(P_{0})=
\left(\begin{array}{ccc}
1&0&\tilde{P}_{13}\\
0&1&\tilde{P}_{23}\\
0&0&0
\end{array}\right).
$$
For any $E_{1}\in\mathcal{J}(\mathcal{L},I_{-}^{\mathcal{N}}\vee P_{\xi})$, there exists some $\bar{E}_{1}\in \mathcal{J}(\mathcal{L},I_{-}^{\mathcal{N}})$ and $g_{1}\in (I_{-}^{\mathcal{N}}\vee P_{\xi})^{\perp}$ such that $E_{1}=\bar{E}_{1}+\xi\otimes g_{1}$, and we have $[P_{0},E_{1}]=\xi\otimes g_{1}$. This implies $[\varphi(P_{0}),\tilde{E}_{1}+\xi\otimes \tilde{g}_{1}]=\psi(\xi\otimes g_{1})$ for all $\tilde{E}_{1}\in\mathcal{J}(\mathcal{L},I_{-}^{\mathcal{N}})$ and $\tilde{g}_{1}\in (I_{-}^{\mathcal{N}}\vee P_{\xi})^{\perp}$, where
$$
\tilde{E}_{1}=\left(\begin{array}{ccc}
0&0&\tilde{E}_{13}\\
0&0&\tilde{E}_{23}\\
0&0&0
\end{array}\right) \ {\rm{and}} \
\psi(\xi\otimes g_{1})=\left(\begin{array}{ccc}
0&0&*\\
0&0&0\\
0&0&0
\end{array}\right).
$$
It follows that $[\varphi(P_{0}),\tilde{E}_{1}]=\psi(\xi\otimes g_{1})$, and hence $\tilde{E}_{23}=0$, which leads to a contradiction.

Secondly, we consider the case where $\psi^{0}$ is order-reversing. Similarly, we can assume that $0_{+}^{\mathcal{N}}<N=I_{-}^{\mathcal{N}}$, and we can obtain the results (\ref{xi to *}) and (\ref{* to xi}).

For any $D_{2}\in\mathcal{J}(\mathcal{L},I_{-}^{\mathcal{N}}\vee P_{\xi})$, there exists some $\bar{D}_{2}\in \mathcal{J}(\mathcal{L},I_{-}^{\mathcal{N}})$ and $f_{2}\in (I_{-}^{\mathcal{N}}\vee P_{\xi})^{\perp}$ such that $D_{2}=\bar{D}_{2}+\xi\otimes f_{2}$, and we have $[P_{0},D_{2}]=\xi\otimes f_{2}$. This implies $[\varphi(P_{0}),\tilde{D}_{2}+\xi\otimes \tilde{f}_{2}]=\psi(\xi\otimes f_{2})$ for all $\tilde{D}_{2}\in\mathcal{J}(\mathcal{L},0_{+}^{\mathcal{N}})$ and $\tilde{f}_{2}\in (I_{-}^{\mathcal{N}}\vee P_{\xi})^{\perp}$, where
$$
\tilde{D}_{2}=\left(\begin{array}{ccc}
0&\tilde{D}_{12}&\tilde{D}_{13}\\
0&0&0\\
0&0&0
\end{array}\right) \ {\rm{and}} \
\psi(\xi\otimes f_{2})=\left(\begin{array}{ccc}
0&0&\ast\\
0&0&0\\
0&0&0
\end{array}\right).
$$
It follows that $[\varphi(P_{0}),\tilde{D}_{2}]=\psi(\xi\otimes f_{2})$, and hence $\tilde{D}_{12}-\tilde{D}_{12}\tilde{P}_{22}=0$ for all $\tilde{D}_{12}\in\mathcal{B}((I_{-}^{\mathcal{N}}\ominus 0_{+}^{\mathcal{N}}),0_{+}^{\mathcal{N}})$, which implies $\tilde{P}_{22}=1$. Since $\varphi({P_{0}})$ is an idempotent, we conclude that
$$
\varphi(P_{0})=
\left(\begin{array}{ccc}
1&0&\tilde{P}_{13}\\
0&1&\tilde{P}_{23}\\
0&0&0
\end{array}\right).
$$

For any $E_{2}\in\mathcal{J}(\mathcal{L},0_{+}^{\mathcal{N}}\vee P_{\xi})$, there exists some $\bar{E}_{2}\in \mathcal{J}(\mathcal{L},0_{+}^{\mathcal{N}})$ and $g_{2}\in (I_{-}^{\mathcal{N}}\vee P_{\xi})^{\perp}$ such that $E_{2}=\bar{E}_{2}+\xi\otimes g_{2}$, and we have $[P_{0},E_{2}]=\xi\otimes g_{2}$. This implies $[\varphi(P_{0}),\tilde{E}_{2}+\xi\otimes \tilde{g}_{2}]=\psi(\xi\otimes g_{2})$ for all $\tilde{E}_{2}\in\mathcal{J}(\mathcal{L},I_{-}^{\mathcal{N}})$ and $\tilde{g}_{2}\in (I_{-}^{\mathcal{N}}\vee P_{\xi})^{\perp}$, where
$$
\tilde{E}_{2}=\left(\begin{array}{ccc}
0&0&\tilde{E}_{13}\\
0&0&\tilde{E}_{23}\\
0&0&0
\end{array}\right) \ {\rm{and}} \
\psi(\xi\otimes g_{2})=\left(\begin{array}{ccc}
0&0&\ast\\
0&0&0\\
0&0&0
\end{array}\right).
$$
It follows that $[\varphi(P_{0}),\tilde{E}_{2}]=\psi(\xi\otimes g_{2})$, and hence $\tilde{E}_{23}=0$, which leads to a contradiction.
\end{proof}

(2) The following lemma shows that $\varphi(P_{0})$ does not satisfy Form (2).
\begin{lemma}\label{not2}
There not exists some $N\in\mathcal{N}^{0}$ such that $\varphi(P_{0})$ satisfies Form (2).
\end{lemma}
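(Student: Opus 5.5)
I argue by contradiction, following the pattern of Lemma \ref{xi-f0} but now with the information of Remark \ref{traverse} in place of the description $\phi(\xi\otimes f)=\xi\otimes\tilde f$ available when $0_{+}^{\mathcal{N}}=0$. Suppose there is $N\in\mathcal{N}^{0}$ such that $\varphi(P_{0})$ has Form (2) with respect to $\psi^{0}(N)\vee P_{\xi}$. Then $\varphi(P_{0})x=x$ for all $x\in\psi^{0}(N)$, $\varphi(P_{0})\xi=\xi$, and $\varphi^{\ast}(P_{0})(\psi^{0}(N)\vee P_{\xi})^{\perp}\neq 0$. I split according to whether $\psi^{0}$ is order-preserving or order-reversing (Proposition \ref{order}).

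\textbf{Reduction to an extreme element.} In the order-preserving case I show that Form (2) for $\psi^{0}(N)$ forces Form (2) for every smaller $\psi^{0}(N')$: $\varphi(P_{0})$ fixes $\psi^{0}(N')\subset\psi^{0}(N)$ and $\xi$, and its adjoint is still nonzero on the larger complement. So I may take $N=0_{+}^{\mathcal{N}}$. In the order-reversing case the same argument gives $N=I_{-}^{\mathcal{N}}$, and then $\psi^{0}(N)=0_{+}^{\mathcal{N}}$.

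\textbf{Pinning down $\varphi(P_{0})$.} I use the relations $[P_{0},\xi\otimes f]=\xi\otimes f$ for $f\in(I_{-}^{\mathcal{N}}\vee P_{\xi})^{\perp}$ and $[P_{0},C]=0$ for $C\in\mathcal{J}(\mathcal{L},0_{+}^{\mathcal{N}})$ of the form with only a $C_{13}$ entry. By Remark \ref{traverse}, $\phi$ maps the span of these two families onto itself. Write $\phi(\xi\otimes f)=C'+\xi\otimes\tilde f$ and apply $[\varphi(P_{0}),\cdot\,]$.
\begin{itemize}
\item Since $\varphi(P_{0})$ fixes $0_{+}^{\mathcal{N}}$ and $\xi$, we have $[\varphi(P_{0}),C']=C'-C'\varphi(P_{0})$ and $[\varphi(P_{0}),\xi\otimes\tilde f]=\xi\otimes\tilde f-\xi\otimes\varphi(P_{0})^{\ast}\tilde f$.
\item Comparing these with the required identities, and using that $\phi$ is bijective onto the span, I aim to show that $\varphi(P_{0})^{\ast}$ annihilates $(I_{-}^{\mathcal{N}}\vee P_{\xi})^{\perp}$. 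Then on $(I_{-}^{\mathcal{N}})^{\perp}$ the $(3,3)$-block of $\varphi(P_{0})$ is a rank-one idempotent $\xi_{3}\otimes h$.
\item Next I test against $\mathcal{J}(\mathcal{L},0_{+}^{\mathcal{N}}\vee P_{\xi})$ and $\mathcal{J}(\mathcal{L},I_{-}^{\mathcal{N}}\vee P_{\xi})$, as in the proof of Lemma \ref{not3}. This forces the $(2,2)$ block $\tilde{P}_{22}$, on $I_{-}^{\mathcal{N}}\ominus 0_{+}^{\mathcal{N}}$, to equal $1$, using the $\tilde{D}_{12}$ entries traversed by $\phi$ as in Remark \ref{traverse}.
\item It then follows that $\varphi(P_{0})$ is the identity on $I_{-}^{\mathcal{N}}\vee P_{\xi}$ and vanishes on its complement, which contradicts Form (2). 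If instead the adjoint is nonzero on $(I_{-}^{\mathcal{N}}\vee P_{\xi})^{\perp}$, the commutator with $\phi(\mathcal{J}(\mathcal{L},I_{-}^{\mathcal{N}}))$ entries in position $(2,3)$ (or $(1,3)$) is nonzero, whereas the preimage relation $[P_{0},\bar{E}]=0$ demands that it vanish. This is again a contradiction.
\end{itemize}

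The order-reversing case is handled symmetrically: $\psi^{0}(0_{+}^{\mathcal{N}})=I_{-}^{\mathcal{N}}$ and the roles of the $(1,2)$ and $(2,3)$ entries are interchanged, exactly as in the second half of Lemma \ref{not3}.

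\textbf{Main obstacle.} The hardest step is the one showing $\varphi(P_{0})^{\ast}(I_{-}^{\mathcal{N}}\vee P_{\xi})^{\perp}=0$. Here $\phi(\xi\otimes f)$ may have a genuine $C_{13}$ component, so the clean argument used for $0_{+}^{\mathcal{N}}=0$ is unavailable. I would first try to separate the $C'$ part from the $\xi\otimes\tilde f$ part by noting that they lie in complementary blocks: $C'$ has range in $0_{+}^{\mathcal{N}}$ and kills $\xi$, while $\xi\otimes\tilde f$ has range $\mathbb{C}\xi$. Both identities can then be read off blockwise, with the separating property of $\xi$ (so $\xi_{1}$, $\xi_{3}$ are nonzero) used to make the decomposition unique.
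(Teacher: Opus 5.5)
\textbf{There is a genuine gap.} Your reduction to $\psi^{0}(N)=0_{+}^{\mathcal{N}}$ is fine. So is separating a part with range in $0_{+}^{\mathcal{N}}$ (or $I_{-}^{\mathcal{N}}$) from a part with range $\mathbb{C}\xi$, and so is the derivation of $\tilde P_{22}=1$. The problem is the step you flag as the main obstacle: it cannot be done with the relations you list, and its target is the wrong one.

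Let $V_{1}$ be the operators in ${\rm Alg}\mathcal{L}$ with only a $(1,3)$ entry, and let $V_{2}=\{\xi\otimes f: f\in(I_{-}^{\mathcal{N}}\vee P_{\xi})^{\perp}\}$. Since $\varphi(P_{0})$ fixes $0_{+}^{\mathcal{N}}$ and $\xi$, the map $[\varphi(P_{0}),\cdot\,]$ sends $C'+\xi\otimes\tilde f$ to $C'(I-\varphi(P_{0}))+\xi\otimes(I-\varphi(P_{0})^{\ast})\tilde f$. Your two relations say only that this map, which is automatically idempotent, has range $\phi(V_{2})$ and kernel $\phi(V_{1})$. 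You do not know how $\phi$ distributes $V_{1}$ and $V_{2}$; under the hypothesis of Lemma \ref{not3} it even swaps them. So nothing here forces $\varphi(P_{0})^{\ast}$ to vanish on $(I_{-}^{\mathcal{N}}\vee P_{\xi})^{\perp}$. In fact the relations that do control this block force the opposite, $\tilde P_{33}=1$.

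Your endgame also fails.
\begin{itemize}
\item Take an idempotent that is the identity on $I_{-}^{\mathcal{N}}\vee P_{\xi}$ with range $I_{-}^{\mathcal{N}}\vee P_{\xi}$, for example the orthogonal projection onto it. It lies in ${\rm Alg}\mathcal{L}$, fixes $0_{+}^{\mathcal{N}}$ and $\xi$, and its adjoint is nonzero on $(0_{+}^{\mathcal{N}}\vee P_{\xi})^{\perp}$. So it \emph{does} have Form (2) relative to $0_{+}^{\mathcal{N}}\vee P_{\xi}$, and you get no contradiction.
\item In your other branch, a nonzero adjoint on $(I_{-}^{\mathcal{N}}\vee P_{\xi})^{\perp}$ does not make the commutator with $\phi(\mathcal{J}(\mathcal{L},I_{-}^{\mathcal{N}}))$ nonzero. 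Its $(2,3)$ entry is $\tilde D_{23}(1-\tilde P_{33})$, which vanishes when $\tilde P_{33}=1$. The correct dichotomy is whether $\tilde P_{33}=1$, not whether the adjoint vanishes.
\end{itemize}

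\textbf{How the paper closes it.} It never uses $\phi(\xi\otimes f)$. Use $[P_{0},C]=0$ for all $C\in\mathcal{J}(\mathcal{L},0_{+}^{\mathcal{N}})$ and $[P_{0},D]=0$ for all $D\in\mathcal{J}(\mathcal{L},I_{-}^{\mathcal{N}})$; interchange the two in the order-reversing case. Discard the $\xi\otimes f$ components by your range argument.
\begin{itemize}
\item The first relation gives $\tilde C_{12}(1-\tilde P_{22})=0$ for every $\tilde C_{12}$, since these entries are traversed by Remark \ref{traverse}. Hence $\tilde P_{22}=1$, and by idempotence $\tilde P_{12}=0$, so $\varphi(P_{0})$ is the identity on $I_{-}^{\mathcal{N}}$.
\item The second relation then gives $\tilde D_{23}(1-\tilde P_{33})=0$ for all $\tilde D_{23}$ with $\tilde D_{23}\xi_{3}=0$. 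These are traversed by the same argument as in Remark \ref{traverse}. Hence ${\rm Ran}(1-\tilde P_{33})\subset\mathbb{C}\xi_{3}$.
\item Since also $(1-\tilde P_{33})\xi_{3}=0$, the idempotent $1-\tilde P_{33}$ is nilpotent, so $\tilde P_{33}=1$.
\item Idempotence now kills $\tilde P_{13}$ and $\tilde P_{23}$, giving $\varphi(P_{0})=I$. This contradicts $\varphi(P_{0})\neq I$.
\end{itemize}
In this route your ``main obstacle'' never arises.
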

\begin{proof}
If there exists some $N\in\mathcal{N}^{0}$ such that $\varphi(P_{0})$ satisfies Form (2), by the decomposition $\mathcal{H}=(\psi^{0}(N)\vee P_{\xi})\oplus (\psi^{0}(N)\vee P_{\xi})^{\perp}$, we have
$$
\varphi(P_{0})=
\left(\begin{array}{ccc}
1&*\\
0&*(\neq 0,1)\\
\end{array}\right).
$$
Indeed, if there exists some $N\in\mathcal{N}^{0}$ satisfying the aforementioned conditions, then $0_{+}^{\mathcal{N}}$ must also satisfy these conditions. Therefore, we may assume without loss of generality that $\psi^{0}(N)=0_{+}^{\mathcal{N}}$. And under the decomposition $\mathcal{H}=0_{+}^{\mathcal{N}}\oplus (I_{-}^{\mathcal{N}}\ominus 0_{+}^{\mathcal{N}})\oplus (I_{-}^{\mathcal{N}})^{\perp}$, we can write $\xi=(\xi_{1},\xi_{2},\xi_{3})^{T}$ and
$$
\varphi(P_{0})=
\left(\begin{array}{ccc}
1&\tilde{P}_{12}&\tilde{P}_{13}\\
0&\tilde{P}_{22}&\tilde{P}_{23}\\
0&0&\tilde{P}_{33}
\end{array}\right) \ {\rm{with}} \ \tilde{P}_{33}\xi_{3}=\xi_{3}.
$$

Now, we consider the case where $\psi^{0}$ is order-preserving.

For any
$
C=\left(\begin{array}{ccc}
0&C_{12}&C_{13}\\
0&0&0\\
0&0&0
\end{array}\right)\in\mathcal{J}(\mathcal{L},0_{+}^{\mathcal{N}})\subset \mathcal{J}(\mathcal{L},0_{+}^{\mathcal{N}}\vee P_{\xi}),
$
we have $[P_{0},C]=0$, which implies $[\varphi(P_{0}),\phi(C)]=0$. Then, by Remark \ref{traverse}, we obtain
$$
\phi(C)=
\left(\begin{array}{ccc}
0&\tilde{C}_{12}&\tilde{C}_{13}\\
0&0&0\\
0&0&0
\end{array}\right)+\xi\otimes f,  \ {\rm{for \ some}} \ f\in (I_{-}^{\mathcal{N}}\vee P_{\xi})^{\perp},
$$
which we denote as $\tilde{C} + \xi \otimes f$.

Hence $[\varphi(P_{0}),\tilde{C}]=0$, and $\tilde{C}_{12}-\tilde{C}_{12}\tilde{P}_{22}=0$ for all $\tilde{C}_{12}\in\mathcal{B}((I_{-}^{\mathcal{N}}\ominus 0_{+}^{\mathcal{N}}),0_{+}^{\mathcal{N}})$, it follows that $\tilde{P}_{22}=1$. Since $\varphi(P_{0})$ is an idempotent, we have
$$
\varphi(P_{0})=
\left(\begin{array}{ccc}
1&0&\tilde{P}_{13}\\
0&1&\tilde{P}_{23}\\
0&0&\tilde{P}_{33}
\end{array}\right).
$$
For any
$
D=\left(\begin{array}{ccc}
0&0&D_{13}\\
0&0&D_{23}\\
0&0&0
\end{array}\right)\in\mathcal{J}(\mathcal{L},I_{-}^{\mathcal{N}})\subset \mathcal{J}(\mathcal{L},I_{-}^{\mathcal{N}}\vee P_{\xi}),
$
we have $[P_{0},D]=0$, which implies $[\varphi(P_{0}),\phi(D)]=0$. And, by Remark \ref{traverse}, we obtain
$$
\phi(D)=
\left(\begin{array}{ccc}
0&0&\tilde{D}_{13}\\
0&0&\tilde{D}_{23}\\
0&0&0
\end{array}\right)+\xi\otimes f',  \ {\rm{for \ some}} \ f'\in (I_{-}^{\mathcal{N}}\vee P_{\xi})^{\perp},
$$
which we denote as $\tilde{D} + \xi \otimes f'$.

Hence $[\varphi(P_{0}),\tilde{D}]=0$, and $\tilde{D}_{23}-\tilde{D}_{23}\tilde{P}_{33}=0$ for all $\tilde{D}_{23}\in\mathcal{B}((I_{-}^{\mathcal{N}})^{\perp},(I_{-}^{\mathcal{N}}\ominus 0_{+}^{\mathcal{N}}))$ with $\tilde{D}_{23}\xi_{3}=0$, and hence $\tilde{P}_{33}^{\ast}(I_{-}^{\mathcal{N}}\vee P_{\xi})^{\perp}=1$. Since $\tilde{P}_{33}\xi_{3}=\xi_{3}$ and $\tilde{P}_{33}$ is an idempotent, we obtain $\tilde{P}_{33}=1$. 
Again, by the idempotence of $\varphi(P_{0})$, we obtain $\varphi(P_{0})=1$, which leads to a contradiction.

Since $[P_{0},C]=0=[P_{0},D]$ for any $C\in\mathcal{J}(\mathcal{L},0_{+}^{\mathcal{N}})$ and $D\in\mathcal{J}(\mathcal{L},I_{-}^{\mathcal{N}})$, the situation remains the same regardless of whether $\psi^0$ is order-preserving or order-reversing. A detailed proof can be found in the proof of Lemma \ref{not3}.\end{proof}

\begin{proposition}\label{xi-f00}
Let $P_{0}=\xi\otimes f_{0}$ for any $f_{0}\in (I_{-}^{\mathcal{N}})^{\perp}$ with $\langle\xi, f_{0}\rangle=1$. Then there exists $\tilde{f}_{0}\in (I_{-}^{\mathcal{N}})^{\perp}$ with $\langle\xi, \tilde{f}_{0}\rangle=1$ such that $\varphi(P_{0})=\xi\otimes \tilde{f}_{0}$.
\end{proposition}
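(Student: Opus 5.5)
By Lemma \ref{not3} and Lemma \ref{not2}, for every $N\in\mathcal{N}^{0}$ the idempotent $\varphi(P_{0})$ is neither of Form (3) nor of Form (2) with respect to $\psi^{0}(N)$. Since Lemma \ref{3-cases} says these are the only other options, $\varphi(P_{0})$ must be of Form (1) for every $N\in\mathcal{N}^{0}$. So the plan is to start from
$$
\varphi(P_{0})^{\ast}(\psi^{0}(N)\vee P_{\xi})^{\perp}=0,\qquad \text{equivalently}\qquad \varphi(P_{0})\mathcal{H}\subset \psi^{0}(N)\vee P_{\xi}, \quad \forall N\in\mathcal{N}^{0},
$$
and then show that the range of $\varphi(P_{0})$ lies in $P_{\xi}$.

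In the case $0_{+}^{\mathcal{N}}=0$ this follows at once. Since $\psi^{0}$ is a bijection of $\mathcal{N}^{0}$, the projections $\psi^{0}(N)$ decrease to $0$, and so $\varphi(P_{0})\mathcal{H}\subset\wedge(\psi^{0}(N)\vee P_{\xi})=P_{\xi}$, as at the end of the proof of Lemma \ref{xi-f0}. In the present case, however, $0_{+}^{\mathcal{N}}\neq 0$ and the intersection over $N$ is only $0_{+}^{\mathcal{N}}\vee P_{\xi}$. Taking $N$ with $\psi^{0}(N)=0_{+}^{\mathcal{N}}$, we get that under the decomposition $\mathcal{H}=0_{+}^{\mathcal{N}}\oplus(I_{-}^{\mathcal{N}}\ominus 0_{+}^{\mathcal{N}})\oplus(I_{-}^{\mathcal{N}})^{\perp}$ we can write
$$
\varphi(P_{0})=\left(\begin{array}{ccc}
\tilde{P}_{11}&\tilde{P}_{12}&\tilde{P}_{13}\\
0&0&0\\
0&0&0
\end{array}\right)+\xi\otimes g
$$
for some $g\in(I_{-}^{\mathcal{N}})^{\perp}$, following the pattern of the Form (1) computation in Lemma \ref{xi-f0}. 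It remains to kill the first block, which I denote $\tilde{Q}$.

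To do this I will use the commutation relations of $P_{0}$. For every $C\in\mathcal{J}(\mathcal{L},0_{+}^{\mathcal{N}})$ and every $D\in\mathcal{J}(\mathcal{L},I_{-}^{\mathcal{N}})$ we have $[P_{0},C]=0=[P_{0},D]$, and for every $f\in(I_{-}^{\mathcal{N}}\vee P_{\xi})^{\perp}$ we have $[P_{0},\xi\otimes f]=\xi\otimes f$. Applying $\psi$ and using Remark \ref{traverse}, the images $\phi(C)$ run through all possible $\tilde{C}_{12}$ entries (in the order-preserving case) or all $\tilde{C}_{23}$ entries (in the order-reversing case), each modulo a term $\xi\otimes\tilde f$.

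First, $[\varphi(P_{0}),\phi(C)]=0$ should force $\tilde{P}_{11}=0$ and $\tilde{P}_{12}=0$. Left multiplication by $\tilde P_{11}$ on the $(1,2)$ entries has to vanish identically, and the $\xi\otimes g$ term contributes only multiples of $\xi$, which I will handle by pairing with vectors orthogonal to $\xi$. Second, the $(1,3)$-type elements and the relation $[\varphi(P_{0}),\phi(\xi\otimes f)]=\phi(\xi\otimes f)$ should force $\tilde{P}_{13}=0$. Once these hold, $\varphi(P_{0})=\xi\otimes g$, and idempotence gives $\langle\xi,g\rangle=1$, so we take $\tilde f_{0}=g$.

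The main obstacle I expect is the second step, and specifically the order-reversing case. There $\phi$ exchanges the $(1,3)$-corner elements with the $\xi\otimes f$ elements, as in (\ref{xi to *}) and (\ref{* to xi}), so one must keep careful track of which image carries the $\xi\otimes\tilde f$ component. My approach will be to use $P_{0}C=0$ and $CP_{0}=0$ for $(1,3)$-corner elements $C$. Then $\varphi(P_{0})\phi(C)=\phi(C)\varphi(P_{0})$ with $\phi(C)$ of the form $\xi\otimes f'$ forces $\varphi(P_{0})\xi\otimes f'=\xi\otimes\varphi(P_{0})^{\ast}f'$. Combined with $\tilde{P}_{11}=0$, this should give $\tilde P_{13}=0$. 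If this route stalls, I would fall back on the symmetry argument of applying the same reasoning to $\psi^{-1}$.
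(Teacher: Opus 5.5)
You have the same setup as the paper: Form (1) at $\psi^{0}(N)=0_{+}^{\mathcal{N}}$, the splitting $\varphi(P_{0})=\tilde Q+\xi\otimes g$, and traversed $(1,2)$ entries to get $\tilde P_{11}=0$. One correction there: in the order-reversing case the traversed $(1,2)$ entries come from $\phi(\mathcal{J}(\mathcal{L},I_{-}^{\mathcal{N}}))$, because $\psi^{0}(I_{-}^{\mathcal{N}})=0_{+}^{\mathcal{N}}$. They do not come from $\phi(\mathcal{J}(\mathcal{L},0_{+}^{\mathcal{N}}))$.

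The gap is in removing the rest of the first row, in particular $\tilde P_{13}$. Because $\varphi(P_{0})\in{\rm{Alg}}\mathcal{L}$ leaves $P_{\xi}$ and $I_{-}^{\mathcal{N}}$ invariant, and $\xi\notin I_{-}^{\mathcal{N}}$, you get $\tilde Q\xi=0$ and $g\in(I_{-}^{\mathcal{N}})^{\perp}$, hence $\tilde Q^{\ast}g=0$. Now take any $X=X_{0}+\xi\otimes h\in\mathcal{J}(\mathcal{L},M\vee P_{\xi})$ with $X_{0}\in\mathcal{J}(\mathcal{L},M)$ and $M\in\mathcal{N}^{0}$. A direct computation gives $[\varphi(P_{0}),X]=\tilde QX_{0}+\langle\xi,g\rangle\,\xi\otimes h$. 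Here $\tilde QX_{0}$ only involves $\tilde Q$ restricted to $I_{-}^{\mathcal{N}}$, that is, $\tilde P_{11}$ and $\tilde P_{12}$.

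So $\tilde P_{13}$ never appears in the commutation relations you plan to use, and none of them can force $\tilde P_{13}=0$. Likewise, against $\phi(\mathcal{J}(\mathcal{L},0_{+}^{\mathcal{N}}))$ in the order-preserving case only $\tilde P_{11}$ appears, so your first step does not give $\tilde P_{12}=0$ either.

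The premise of your second step is also unavailable. Equations (\ref{xi to *}) and (\ref{* to xi}) were derived inside the proof of Lemma \ref{not3}, under the Form (3) hypothesis that the lemma refutes. In general, Remarks \ref{xi-eta} and \ref{traverse} only say that both the $(1,3)$-corner elements and the $\xi\otimes f$ are sent into ($(1,3)$-corner)$\,+\,\xi\otimes\tilde f$. Even if you grant $\phi(C)=\xi\otimes f'$, your identity reduces to $\langle\xi,g\rangle\,\xi\otimes f'=0$, which says nothing about $\tilde P_{13}$. The fallback to $\psi^{-1}$ is not an argument.

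The missing idea is idempotence, which the paper uses to go straight from $\tilde P_{11}=0$ to $\tilde P_{0}=0$. From $\tilde Q\xi=0$ and $\tilde Q^{\ast}g=0$ you get $\varphi(P_{0})^{2}=\tilde Q^{2}+\langle\xi,g\rangle\,\xi\otimes g$. Ranges in $0_{+}^{\mathcal{N}}$ and in $\mathbb{C}\xi$ are independent, so $\tilde Q^{2}=\tilde Q$, which means $\tilde P_{1j}=\tilde P_{11}\tilde P_{1j}$ for $j=1,2,3$. Hence $\tilde P_{11}=0$ alone forces $\tilde Q=0$. Then $\varphi(P_{0})=\xi\otimes g$, which is nonzero because $\varphi$ preserves ${\rm{idem}}(\mathcal{L})\setminus\{0,I\}$, and therefore $\langle\xi,g\rangle=1$.
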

\begin{proof}
By Lemma \ref{not3} and Lemma \ref{not2}, we obtain $\varphi(P_{0})$ satisfies Form (1) for all $N\in\mathcal{N}^{0}$. Let $N=0_{+}^{\mathcal{N}}$. 
Then, under the decomposition $\mathcal{H}= 0_{+}^{\mathcal{N}} \oplus (I_{-}^{\mathcal{N}} \ominus 0_{+}^{\mathcal{N}}) \oplus (I_{-}^{\mathcal{N}})^{\perp}$, we have $\varphi(P_{0})=\tilde{P}_{0}+\xi\otimes \tilde{f}_{0}$ with $\tilde{f}_{0}\in (I_{-}^{\mathcal{N}})^{\perp}$ and
$$
\tilde{P}_{0}=\left(\begin{array}{ccc}
\tilde{P}_{11}&\tilde{P}_{12}&\tilde{P}_{33}\\
0&0&0\\
0&0&0
\end{array}\right).
$$

Now, we consider the case where $\psi^{0}$ is order-preserving.

Since $[P_{0},C]=0$ for all $C\in\mathcal{J}(\mathcal{L},0_{+}^{\mathcal{N}})$, it follows that $[\varphi(P_{0}),\tilde{C}]=0$ for the corresponding operator $\tilde{C}=\tilde{C}_{0}+\xi\otimes g$ with $g\in (I_{-}^{\mathcal{N}}\vee P_{\xi})^{\perp}$ and
$$
\tilde{C}_{0}=\left(\begin{array}{ccc}
0&\tilde{C}_{12}&\tilde{C}_{13}\\
0&0&0\\
0&0&0
\end{array}\right).
$$
This implies $[\tilde{P}_{0},\tilde{C}_{0}]=0$, and so $\tilde{P}_{11}\tilde{C}_{12}=0$ for all $\tilde{C}_{12}\in\mathcal{B}(I_{-}^{\mathcal{N}} \ominus 0_{+}^{\mathcal{N}},0_{+}^{\mathcal{N}})$. Thus $\tilde{P}_{11}=0$, and hence $\tilde{P}_{0}=0$. Therefore, $\varphi(P_{0})=\xi\otimes \tilde{f}_{0}$. Since $\varphi(P)$ is an idempotent, we have $\langle\xi, \tilde{f}_{0}\rangle=1$.

When $\psi^{0}$ is order-reversing, we consider $[P_{0},C]$ for any $C\in\mathcal{J}(\mathcal{L},I_{-}^{\mathcal{N}})$ and obtain the same result.
\end{proof}
\begin{remark}\label{xi-eta1}
In this remark, we only consider the case where $\psi^{0}$ is order-preserving. The proof for the order-reversing case is analogous.

(1) Consider the decomposition $\mathcal{H}= 0_{+}^{\mathcal{N}} \oplus (I_{-}^{\mathcal{N}} \ominus 0_{+}^{\mathcal{N}}) \oplus (I_{-}^{\mathcal{N}})^{\perp}$. For any matrix of the form
$$
C=\left(\begin{array}{ccc}
0&0&C_{13}\\
0&0&0\\
0&0&0
\end{array}\right)\in\mathcal{J}(\mathcal{L},0_{+}^{\mathcal{N}}),
$$
we have $[\xi\otimes f_{0},C]=0$. It follows from Proposition \ref{xi-f00} that $[\xi\otimes \tilde{f}_{0},\phi(C)]=0$ for some $\tilde{f}_{0}\in (I_{-}^{\mathcal{N}})^{\perp}$ with $\langle\xi, \tilde{f}_{0}\rangle=1$. By Remark \ref{xi-eta} or Remark \ref{traverse}, it follows that
$$
\phi:\left(\begin{array}{ccc}
0&0&C_{13}\\
0&0&0\\
0&0&0
\end{array}\right)\mapsto
\left(\begin{array}{ccc}
0&0&\tilde{C}_{13}\\
0&0&0\\
0&0&0
\end{array}\right).
$$
(2) For any $f\in(I_{-}^{\mathcal{N}}\vee P_{\xi})^{\perp}$, we have $[\xi\otimes f_{0},\xi\otimes f]=\xi\otimes f$. Similarly, $\phi$ maps $\xi\otimes f$ to $\xi\otimes \tilde{f}$, where $f,\tilde{f}\in(I_{-}^{\mathcal{N}}\vee P_{\xi})^{\perp}$ and $\tilde{f}$ corresponds to $f$.

\end{remark}

\begin{corollary}\label{J(L,N)2}
Let $N\in\mathcal{N}^{0}$ and $C\in\mathcal{J}(\mathcal{L},N)$. Then $\phi(C)\in\mathcal{J}(\mathcal{L},\psi^{0}(N))$. Moreover, $\phi(\mathcal{J}(\mathcal{L},N))=\mathcal{J}(\mathcal{L},\psi^{0}(N))$
\end{corollary}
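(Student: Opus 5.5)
We follow the proof of Corollary \ref{J(L,N)} in the case $0_{+}^{\mathcal{N}}=0$, with Proposition \ref{xi-f00} taking the place of Lemma \ref{xi-f0}. Fix $N\in\mathcal{N}^{0}$ and $C\in\mathcal{J}(\mathcal{L},N)$. Since $\mathcal{J}(\mathcal{L},N)\subset\mathcal{J}(\mathcal{L},N\vee P_{\xi})$ and $\phi(\mathcal{J}(\mathcal{L},N\vee P_{\xi}))=\mathcal{J}(\mathcal{L},\psi^{0}(N)\vee P_{\xi})$, Remark \ref{CLI} lets us write
$$
\phi(C)=D+\xi\otimes f
$$
with $D\in\mathcal{J}(\mathcal{L},\psi^{0}(N))$ and $f\in(I_{-}^{\mathcal{N}}\vee P_{\xi})^{\perp}$. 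The goal is to show $f=0$.

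\emph{Step 1.} Take $f_{0}\in(I_{-}^{\mathcal{N}})^{\perp}$ with $\langle\xi,f_{0}\rangle=1$. Since $N\neq I$, we have $C\xi=0$. Also $C\mathcal{H}\subset N\leq I_{-}^{\mathcal{N}}$ and $f_{0}\perp I_{-}^{\mathcal{N}}$, so $(\xi\otimes f_{0})C=0$ and $C(\xi\otimes f_{0})=0$. Hence $[\xi\otimes f_{0},C]=0$. Applying $\psi$ and discarding the scalar parts gives $[\varphi(\xi\otimes f_{0}),\phi(C)]=0$.

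\emph{Step 2.} By Proposition \ref{xi-f00}, $\varphi(\xi\otimes f_{0})=\xi\otimes\tilde f_{0}$ with $\tilde f_{0}\in(I_{-}^{\mathcal{N}})^{\perp}$ and $\langle\xi,\tilde f_{0}\rangle=1$. Therefore
$$
0=[\xi\otimes\tilde f_{0},D+\xi\otimes f].
$$
Now compute each term:
\begin{itemize}
\item $D\xi=0$ because $\xi\in\psi^{0}(N)\vee P_{\xi}$.
\item $(\xi\otimes\tilde f_{0})D=0$ because $D\mathcal{H}\subset\psi^{0}(N)\leq I_{-}^{\mathcal{N}}\perp\tilde f_{0}$.
\item $(\xi\otimes\tilde f_{0})(\xi\otimes f)=\xi\otimes f$.
\item $(\xi\otimes f)(\xi\otimes\tilde f_{0})=\langle\xi,f\rangle\,\xi\otimes\tilde f_{0}=0$, since $f\perp P_{\xi}$.
\end{itemize}
The commutator therefore equals $\xi\otimes f$, so $f=0$ and $\phi(C)=D\in\mathcal{J}(\mathcal{L},\psi^{0}(N))$.

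\emph{Step 3.} This gives $\phi(\mathcal{J}(\mathcal{L},N))\subset\mathcal{J}(\mathcal{L},\psi^{0}(N))$. For the reverse inclusion, apply the same argument to the Lie automorphism $\psi^{-1}$. Its induced maps are $(\psi^{0})^{-1}$ and $\phi^{-1}$, and Proposition \ref{xi-f00} holds for $\psi^{-1}$ as well. This yields $\phi^{-1}(\mathcal{J}(\mathcal{L},\psi^{0}(N)))\subset\mathcal{J}(\mathcal{L},N)$, hence equality.

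The main obstacle is the first half of Step 2. We must be sure that the scalar parts vanish, that $\phi(C)$ really decomposes as $D+\xi\otimes f$ with $D$ having range inside $\psi^{0}(N)$ (Remark \ref{CLI}), and that Proposition \ref{xi-f00} is available in both the order-preserving and order-reversing cases. Once these are granted, the remaining computation is immediate.
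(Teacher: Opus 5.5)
Your proposal is correct and follows the route the paper intends: the paper omits this proof as "similar to Corollary~\ref{J(L,N)}", and you carry out that argument with Proposition~\ref{xi-f00} in place of Lemma~\ref{xi-f0}, using $[\xi\otimes\tilde f_{0},D+\xi\otimes f]=\xi\otimes f$ to force $f=0$ and passing to $\psi^{-1}$ for the reverse inclusion. Your concern that the scalar parts must vanish is unnecessary, since scalars drop out of every commutator; your argument already handles this correctly by discarding them.
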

\begin{proof}
The proof is similar to that of Corollary \ref{J(L,N)}, so we omit it.
\end{proof}

\begin{proposition}\label{Omega2}
For any $Q \in \Omega_{2}(\mathcal{L}, N)$, we have $\varphi(Q)\in \Omega_{2}(\mathcal{L}, \psi^{0}(N))$.
\end{proposition}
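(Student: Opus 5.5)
The plan is to follow the proof of Proposition \ref{Omega1} almost verbatim. In the case $0_{+}^{\mathcal{N}}=0$, the key inputs there were Lemma \ref{xi-f0} and Corollary \ref{J(L,N)}. Here they are replaced by Proposition \ref{xi-f00}, Remark \ref{xi-eta1} and Corollary \ref{J(L,N)2}. We treat the case where $\psi^{0}$ is order-preserving in detail; the order-reversing case is handled by the same argument with $0_{+}^{\mathcal{N}}$ and $I_{-}^{\mathcal{N}}$ interchanged, as in Lemmas \ref{not3} and \ref{not2}.

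Fix $N\in\mathcal{N}^{0}$ and work with the decomposition $\mathcal{H}=N\oplus(I_{-}^{\mathcal{N}}\ominus N)\oplus(I_{-}^{\mathcal{N}})^{\perp}$. First take the special idempotent $Q=\left(\begin{array}{ccc}1&Q_{12}&Q_{13}\\0&0&0\\0&0&0\end{array}\right)\in\Omega_{2}(\mathcal{L},N)$. Since $Q\in\Omega_{2}(\mathcal{L},N)\subset\Omega_{2}(\mathcal{L},N\vee P_{\xi})$, Lemma \ref{3-cases} says $\varphi(Q)$ has Form (1), (2) or (3) with respect to $\psi^{0}(N)\vee P_{\xi}$. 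We rule out the last two.

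Form (2) is excluded as follows. Since $[Q,\xi\otimes f]=0$ for all $f\in(I_{-}^{\mathcal{N}}\vee P_{\xi})^{\perp}$, Remark \ref{xi-eta1}(2) gives $[\varphi(Q),\xi\otimes\tilde f]=0$ for all $\tilde f$. Together with $\varphi(Q)\xi=\xi$, this forces $\varphi(Q)^{\ast}$ to act as the identity on $(I_{-}^{\mathcal{N}}\vee P_{\xi})^{\perp}$. Next we use $[Q,C]=0$ for $C\in\mathcal{J}(\mathcal{L},I_{-}^{\mathcal{N}})$, transported by Corollary \ref{J(L,N)2}, to get that the middle diagonal block equals $1$. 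Then $\varphi(Q)=I$, hence $Q=I$, which is a contradiction.

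Form (3) is excluded using $[Q,C]=C$ for all $C\in\mathcal{J}(\mathcal{L},N)$. By Corollary \ref{J(L,N)2} and the fact that $\psi(C)-\phi(C)$ is scalar, this gives $[\varphi(Q),\tilde C]=\tilde C$ for all $\tilde C\in\mathcal{J}(\mathcal{L},\psi^{0}(N))$. That identity forces $\tilde Q_{22}=0$, contradicting Form (3).

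So $\varphi(Q)$ has Form (1), and we can write $\varphi(Q)=\tilde Q+\xi\otimes f$ with $\tilde Q$ supported in the first block row. Applying the relation $[\varphi(Q),\tilde C]=\tilde C$ again gives $\tilde Q_{11}=1$. Finally, if $\xi\otimes f\neq0$ then $\langle\xi,f\rangle=1$. In that case $[\varphi(Q),\xi\otimes\tilde g]=\xi\otimes\tilde g\neq0$, whereas $[Q,\xi\otimes g]=0$; by Remark \ref{xi-eta1}(2) this is a contradiction. Hence $\varphi(Q)\in\Omega_{2}(\mathcal{L},\psi^{0}(N))$.

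For a general $Q\in\Omega_{2}(\mathcal{L},N)$ with $Q_{11}\neq1$, Form (2) is excluded exactly as above. For Form (3), we subtract $\varphi(Q')$ for a suitable special idempotent $Q'$ of the first type; by the previous step $\varphi(Q')$ is already known. We then compare commutators with rank-one operators $x\otimes f\in\mathcal{J}(\mathcal{L},N)\cap\mathcal{J}(\mathcal{L},I_{-}^{\mathcal{N}})$, using Corollary \ref{J(L,N)2} to transport them, and obtain a contradiction as in Proposition \ref{Omega1}. The $\xi\otimes f$ term is then removed by the same argument as before.

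The main obstacle is the Form (3) exclusion for general $Q$. Because $0_{+}^{\mathcal{N}}\neq0$, $\phi$ and $\psi$ agree only up to scalars on $\mathcal{J}(\mathcal{L},N\vee P_{\xi})$. So we must first establish an analogue of Corollary \ref{=J(L,E)}, namely $\psi=\phi$ on $\mathcal{J}(\mathcal{L},N)$ and on $\{\xi\otimes f\}$. For the $\xi\otimes f$ part we get this from $\xi\otimes f=[\xi\otimes f_{0},\xi\otimes f]$ and Proposition \ref{xi-f00}. For the $\mathcal{J}(\mathcal{L},N)$ part we get it from $C=[Q',C]$ using the special idempotents already handled. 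The edge case $\psi^{0}(N)=I_{-}^{\mathcal{N}}$ needs a separate check, but there Form (3) is excluded by Lemma \ref{3-cases}.
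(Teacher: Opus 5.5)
\textbf{The order-reversing case is a genuine gap.} Your order-preserving argument is the adaptation of Proposition~\ref{Omega1} that the paper has in mind. The order-reversing case, however, cannot be obtained by interchanging $0_{+}^{\mathcal{N}}$ and $I_{-}^{\mathcal{N}}$.

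Suppose $\psi^{0}$ reverses order, so $\psi^{0}(0_{+}^{\mathcal{N}})=I_{-}^{\mathcal{N}}$ and $\psi^{0}(I_{-}^{\mathcal{N}})=0_{+}^{\mathcal{N}}$. Let $Q\in{\rm{Alg}}\mathcal{L}$ be an idempotent that is the identity on $0_{+}^{\mathcal{N}}$, has range $0_{+}^{\mathcal{N}}$, and kills $\xi$. Your special-idempotent step with $N=0_{+}^{\mathcal{N}}$ goes through verbatim:
\begin{itemize}
\item Form (3) is impossible because $\psi^{0}(N)=I_{-}^{\mathcal{N}}$.
\item Form (2) is excluded because $\varphi(Q)^{\ast}$ fixes $(I_{-}^{\mathcal{N}}\vee P_{\xi})^{\perp}$.
\item The relation $[Q,C]=C$ on $\mathcal{J}(\mathcal{L},0_{+}^{\mathcal{N}})$, transported onto $\mathcal{J}(\mathcal{L},I_{-}^{\mathcal{N}})$, gives $\tilde Q_{11}=1$.
\item The $\xi\otimes f$ term is removed as before.
\end{itemize}
Hence $\varphi(Q)$ is the identity on $I_{-}^{\mathcal{N}}$. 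But $Q$ also lies in $\Omega_{2}(\mathcal{L},I_{-}^{\mathcal{N}})$. The proposition would then force $\varphi(Q)\in\Omega_{2}(\mathcal{L},0_{+}^{\mathcal{N}})$, i.e.\ range in $0_{+}^{\mathcal{N}}\neq I_{-}^{\mathcal{N}}$.

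So in the order-reversing case the statement can hold only vacuously. Proving it there amounts to proving that this case cannot occur, and the swapped argument does not do that. Concretely, your Form (3) exclusion for general $Q$ fails for this $Q$ with $N=I_{-}^{\mathcal{N}}$. The special $Q'$ you subtract is then the identity on $I_{-}^{\mathcal{N}}$, so $Q-Q'$ vanishes on $0_{+}^{\mathcal{N}}$. It therefore commutes with every swapped test operator (range in $0_{+}^{\mathcal{N}}$, annihilating $I_{-}^{\mathcal{N}}$), and no contradiction arises.

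You need an independent argument that $\psi^{0}$ preserves order. The paper's proof merely asserts that this case is analogous, and Corollary~\ref{order-preserving} applies the proposition precisely in the order-reversing case. So you cannot get around this by citing the paper.

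\textbf{A false step in the order-preserving case.} For $C\in\mathcal{J}(\mathcal{L},I_{-}^{\mathcal{N}})$ one has $CQ=0$, but $[Q,C]=QC$ is in general nonzero. What you actually need is that $QC\in\mathcal{J}(\mathcal{L},N)$. Then $[\varphi(Q),\phi(C)]=\psi(QC)$ lies in $\mathcal{J}(\mathcal{L},\psi^{0}(N))$; the scalar is $0$, as evaluating both sides at $\xi$ shows. Hence its $(2,3)$ block is zero. That block equals $(\tilde Q_{22}-1)\tilde C_{23}$, where $\tilde C_{23}$ ranges over all operators $(I_{-}^{\mathcal{N}})^{\perp}\to I_{-}^{\mathcal{N}}\ominus\psi^{0}(N)$ annihilating the $(I_{-}^{\mathcal{N}})^{\perp}$-component of $\xi$. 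This gives $\tilde Q_{22}=1$.

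\textbf{Your ``main obstacle'' is not one.} The Form (3) exclusion involves only commutators, which do not see the scalar discrepancy between $\psi$ and $\phi$. So you do not need $\psi=\phi$ on $\mathcal{J}(\mathcal{L},N)$. What you do need there is that $\varphi$ maps the special idempotents for $N$ onto those for $\psi^{0}(N)$, so that $Q'$ exists; this follows by applying the special step to $\psi^{-1}$.
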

\begin{proof}
The proof considers two cases: whether $\psi^{0}$ is order-reversing or order-preserving. As the reasoning for both cases is analogous to that of Proposition \ref{Omega1}, we omit the details.
\end{proof}

\begin{corollary}\label{order-preserving}
Let $0_{+}^{\mathcal{N}}\neq 0$. Then $\psi^{0}$ is order-preserving.
\end{corollary}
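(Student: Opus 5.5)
We argue by contradiction: assume $0_{+}^{\mathcal{N}}\neq 0$ and that $\psi^{0}$ is order-reversing. By Proposition \ref{order} this is the only alternative. Then $\psi^{0}(0_{+}^{\mathcal{N}})=I_{-}^{\mathcal{N}}$, and, since $\psi^{0}$ is a bijection of $\mathcal{N}^{0}$, also $\psi^{0}(I_{-}^{\mathcal{N}})=0_{+}^{\mathcal{N}}$.

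The idea is to compare the dimensions, or more precisely the structure, of the ideals $\mathcal{J}(\mathcal{L},N)$ and their images, using Corollary \ref{J(L,N)2}, which gives $\phi(\mathcal{J}(\mathcal{L},N))=\mathcal{J}(\mathcal{L},\psi^{0}(N))$. We work in the decomposition $\mathcal{H}=0_{+}^{\mathcal{N}}\oplus (I_{-}^{\mathcal{N}}\ominus 0_{+}^{\mathcal{N}})\oplus (I_{-}^{\mathcal{N}})^{\perp}$.

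\textbf{Step 1.} Describe the two ideals explicitly. The ideal $\mathcal{J}(\mathcal{L},0_{+}^{\mathcal{N}})$ consists of the operators supported in the $(1,2)$ and $(1,3)$ corners, where the $(1,3)$ corner is constrained to vanish on $\xi_{3}$. The ideal $\mathcal{J}(\mathcal{L},I_{-}^{\mathcal{N}})$ consists of the operators supported in the $(1,3)$ and $(2,3)$ corners, again annihilating $\xi$. Both contain the common piece $\mathcal{J}_{13}$ of $(1,3)$-corner operators, which is exactly $\mathcal{J}(\mathcal{L},0_{+}^{\mathcal{N}})\cap\mathcal{J}(\mathcal{L},I_{-}^{\mathcal{N}})$.

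\textbf{Step 2.} Use Remark \ref{xi-eta1}(1) in its order-reversing version: $\phi$ maps $\mathcal{J}_{13}$ onto $\mathcal{J}_{13}$. Because $\phi$ is linear and bijective, it induces linear bijections
$$\mathcal{J}(\mathcal{L},0_{+}^{\mathcal{N}})/\mathcal{J}_{13}\to \mathcal{J}(\mathcal{L},I_{-}^{\mathcal{N}})/\mathcal{J}_{13},$$
that is, between $\mathcal{B}(I_{-}^{\mathcal{N}}\ominus 0_{+}^{\mathcal{N}},0_{+}^{\mathcal{N}})$ and $\{T_{23}\in\mathcal{B}((I_{-}^{\mathcal{N}})^{\perp},I_{-}^{\mathcal{N}}\ominus 0_{+}^{\mathcal{N}}):T_{23}\xi_{3}=0\}$. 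A pure dimension count will not give a contradiction here, since everything is infinite dimensional.

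\textbf{Step 3.} Bring in the Lie structure. Use the idempotents $Q\in\Omega_{2}(\mathcal{L},0_{+}^{\mathcal{N}})$ of the form $\left(\begin{smallmatrix}1&*&*\\0&0&0\\0&0&0\end{smallmatrix}\right)$ together with Proposition \ref{Omega2}, which gives $\varphi(Q)\in\Omega_{2}(\mathcal{L},I_{-}^{\mathcal{N}})$, of the form $\left(\begin{smallmatrix}\tilde Q_{11}&\tilde Q_{12}&\tilde Q_{13}\\0&\tilde Q_{22}&\tilde Q_{23}\\0&0&0\end{smallmatrix}\right)$. For $C\in\mathcal{J}(\mathcal{L},0_{+}^{\mathcal{N}})$ we have $[Q,C]=C$, hence $\psi(C)=[\varphi(Q),\phi(C)]$ up to a scalar. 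Since $\phi(C)$ is supported in the $(1,3)$ and $(2,3)$ corners and $\varphi(Q)$ kills the third row, this gives $\phi(C)=\varphi(Q)\phi(C)$. Next take the elements $\xi\otimes f$. By Remark \ref{xi-eta1}(2), $\phi(\xi\otimes f)=\xi\otimes\tilde f$, and $[Q,\xi\otimes f]=\xi\otimes f-\ldots$ can be computed. Finally compare with the relation $[Q',C]=0$ for the idempotents $Q'$ supported on the $(2,2)$ block, and with $[C_{1},C_{2}]=0$ inside each ideal, in the way Lemma \ref{not3} exploited the entries $\tilde{D}_{12}$ and $\tilde{E}_{23}$. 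The target is a contradiction of the form ``$\tilde{P}_{22}=1$ for all such maps, forcing $\tilde{E}_{23}=0$'', as in the proof of Lemma \ref{not3}.

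A cleaner route I would try first uses Proposition \ref{xi-f00}. For any $P_{0}=\xi\otimes f_{0}$ we have $\varphi(P_{0})=\xi\otimes\tilde f_{0}$, and $P_{0}$ commutes with $\mathcal{J}(\mathcal{L},0_{+}^{\mathcal{N}})$. Now consider $A=P_{0}+R$, where $R$ is the idempotent $0_{+}^{\mathcal{N}}$-corner operator. Then the commutator relations $[A,C]=C$ for $C$ in the $(1,2)$ corner, and $[A,C]=0$ for $C$ in the $(2,3)$ corner, distinguish left-module from right-module behaviour. An order-reversing $\psi^{0}$ would interchange these, and hence force $\tilde f_{0}$ to act nontrivially on $\mathcal{J}_{13}$ images, contradicting $[\xi\otimes\tilde f_{0},\phi(C)]=0$ from Remark \ref{xi-eta1}(1).

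The main obstacle is Step 3. Identifying an asymmetric invariant that distinguishes $0_{+}^{\mathcal{N}}$ from $I_{-}^{\mathcal{N}}$ is delicate. The natural candidate is the rank-one vector $\xi$, which sits in the third coordinate ($\xi_{3}\neq 0$) and has its extra idempotents $\xi\otimes f_{0}$ attached on the right. I expect the contradiction to come from the constraint $T_{23}\xi_{3}=0$ being transported to a constraint on $\tilde C_{12}$ that cannot hold for all of $\mathcal{B}(I_{-}^{\mathcal{N}}\ominus 0_{+}^{\mathcal{N}},0_{+}^{\mathcal{N}})$.
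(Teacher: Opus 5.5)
You never actually reach a contradiction. Steps 1--2 are only bookkeeping (and, as you note yourself, a dimension count is useless here). Step 3 and the ``cleaner route'' end with what you hope to obtain rather than a derivation.

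The relations you do derive are correct but not decisive. From $[Q,C]=C$, for $Q\in\Omega_{2}(\mathcal{L},0_{+}^{\mathcal{N}})$ of the special form and $C\in\mathcal{J}(\mathcal{L},0_{+}^{\mathcal{N}})$, you get $\phi(C)=\varphi(Q)\phi(C)$. Since $\phi(\mathcal{J}(\mathcal{L},0_{+}^{\mathcal{N}}))=\mathcal{J}(\mathcal{L},I_{-}^{\mathcal{N}})$, this only says that $\varphi(Q)$ is an idempotent with range $I_{-}^{\mathcal{N}}$. That is perfectly compatible with order reversal.

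The ``cleaner route'' is not usable as written. The compression $R$ to the $0_{+}^{\mathcal{N}}$ corner is not in $\mathrm{Alg}\,\mathcal{L}$, because $R\xi=\xi_{1}\notin\mathbb{C}\xi$ when $\xi$ is separating for $\mathcal{N}''$. This is why the paper uses $E_{N}=\left(\begin{smallmatrix}1&-e_{N}\\0&0\end{smallmatrix}\right)$ instead. The claimed interchange of ``left-module'' and ``right-module'' behaviour is also never derived. A minor point on Step 1: the constraint defining $\mathcal{J}(\mathcal{L},0_{+}^{\mathcal{N}})$ is $C_{12}\xi_{2}+C_{13}\xi_{3}=0$, which couples the two corners; it is not $C_{13}\xi_{3}=0$.

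The missing idea is short. You invoked Proposition \ref{Omega2} only at $N=0_{+}^{\mathcal{N}}$. Use it also at $N=I_{-}^{\mathcal{N}}$, and combine this with the monotonicity of $E\mapsto\Omega_{2}(\mathcal{L},E)$. Here $\Omega_{2}(\mathcal{L},N)$ is just the set of nonzero idempotents in $\mathrm{Alg}\,\mathcal{L}$ with range in $N$, so $\Omega_{2}(\mathcal{L},0_{+}^{\mathcal{N}})\subsetneq\Omega_{2}(\mathcal{L},I_{-}^{\mathcal{N}})$.

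Under order reversal, $\psi^{0}(I_{-}^{\mathcal{N}})=0_{+}^{\mathcal{N}}$, so $\varphi(\Omega_{2}(\mathcal{L},I_{-}^{\mathcal{N}}))\subset\Omega_{2}(\mathcal{L},0_{+}^{\mathcal{N}})$. Now apply the same proposition to $\psi^{-1}$. Its induced maps are $(\psi^{0})^{-1}$ and $\varphi^{-1}$, because $\psi^{-1}(\mathbb{C}I)=\mathbb{C}I$ and a nontrivial idempotent plus a nonzero scalar is never idempotent. This gives $\varphi^{-1}(\Omega_{2}(\mathcal{L},I_{-}^{\mathcal{N}}))\subset\Omega_{2}(\mathcal{L},0_{+}^{\mathcal{N}})$. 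Hence
\[
\Omega_{2}(\mathcal{L},I_{-}^{\mathcal{N}})\subset\varphi(\Omega_{2}(\mathcal{L},0_{+}^{\mathcal{N}}))\subset\varphi(\Omega_{2}(\mathcal{L},I_{-}^{\mathcal{N}}))\subset\Omega_{2}(\mathcal{L},0_{+}^{\mathcal{N}}),
\]
which contradicts the strict inclusion.

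This is the paper's proof. The asymmetry between $0_{+}^{\mathcal{N}}$ and $I_{-}^{\mathcal{N}}$ that you were looking for in Step 3 is already packaged into Proposition \ref{Omega2}. No further analysis of the ideals $\mathcal{J}(\mathcal{L},N)$, their quotients, or the vector $\xi$ is needed at this point.
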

\begin{proof}
Suppose $\psi^{0}$ is order-reversing, by Proposition \ref{Omega2}, under the decomposition $\mathcal{H}=0_{+}^{\mathcal{N}}\oplus(I_{-}^{\mathcal{N}}\ominus 0_{+}^{\mathcal{N}})\oplus (I_{-}^{\mathcal{N}})^{\perp}$, the mapping $\varphi$ acts as
$$
\left(\begin{array}{ccc}
Q_{11}&Q_{12}&Q_{13}\\
0&0&0\\
0&0&0
\end{array}\right)
\mapsto\left(\begin{array}{ccc}
\tilde{Q}_{11}&\tilde{Q}_{12}&\tilde{Q}_{13}\\
0&\tilde{Q}_{22}&\tilde{Q}_{23}\\
0&0&0
\end{array}\right)
$$
and
$$
\left(\begin{array}{ccc}
P_{11}&P_{12}&P_{13}\\
0&P_{22}&P_{23}\\
0&0&0
\end{array}\right)
\mapsto\left(\begin{array}{ccc}
\tilde{P}_{11}&\tilde{P}_{12}&\tilde{P}_{13}\\
0&0&0\\
0&0&0
\end{array}\right).
$$
Since $\Omega_{2}(\mathcal{L},0_{+}^{\mathcal{N}})\subset\Omega_{2}(\mathcal{L}, I_{-}^{\mathcal{N}})$, and consider $\varphi^{-1}$, then yields a contradiction.
\end{proof}

Similar to Corollary \ref{=J(L,E)}, we obtain the following corollary.
\begin{corollary}
Let $N\in\mathcal{N}^{0}$. Then $\psi(C)=\phi(C)$ for all $C\in\mathcal{J}(\mathcal{L},N)$, and $\psi(\xi\otimes f)=\phi(\xi\otimes f)$ for all $f\in (I^{\mathcal{N}}_{-})^{\perp}$ with $\langle \xi,f\rangle=0$, i.e., $\psi(D)=\phi(D)$ for all $D\in \mathcal{J}(\mathcal{L},N\vee P_{\xi})$.
\end{corollary}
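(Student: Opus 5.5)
We follow the proof of Corollary \ref{=J(L,E)}, now in the case $0_{+}^{\mathcal{N}}\neq 0$, where $\psi^{0}$ is order-preserving by Corollary \ref{order-preserving}. Since $\psi(A)-\phi(A)\in\mathbb{C}I$ for every $A\in\mathcal{J}(\mathcal{L},N\vee P_{\xi})$, the task is to show that the scalar part vanishes. The basic device is to write each element of $\mathcal{J}(\mathcal{L},N\vee P_{\xi})$ as a commutator with a suitable idempotent. Then $\psi$ of it is a commutator, and a commutator cannot carry a scalar summand compatible with the structure of $\phi$. Recall that $\mathcal{J}(\mathcal{L},N\vee P_{\xi})=\mathcal{J}(\mathcal{L},N)+\{\xi\otimes f\}$ by Remark \ref{CLI}, so by linearity it suffices to treat the two summands separately. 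The decomposition gives $\psi(A)=\phi(A)$ for $A\in\mathcal{J}(\mathcal{L},N)$ and for $A=\xi\otimes f$, hence on their sum.

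\textbf{Step 1: the summand $\mathcal{J}(\mathcal{L},N)$.} Let $C\in\mathcal{J}(\mathcal{L},N)$ and take
$$
Q=\left(\begin{array}{cc}1&Q_{12}\\0&0\end{array}\right)
$$
with respect to $\mathcal{H}=N\oplus N^{\perp}$. Then $Q\in\Omega_{2}(\mathcal{L},N)$, and since $CN=0$ and $C\mathcal{H}\subset N$, we have $[Q,C]=QC-CQ=C$. By Proposition \ref{Omega2}, $\varphi(Q)\in\Omega_{2}(\mathcal{L},\psi^{0}(N))$, so $\varphi(Q)$ has range in $\psi^{0}(N)$. By Corollary \ref{J(L,N)2}, $\phi(C)\in\mathcal{J}(\mathcal{L},\psi^{0}(N))$. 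Scalars drop out of commutators, so
$$
\psi(C)=[\psi(Q),\psi(C)]=[\varphi(Q),\phi(C)].
$$
It remains to check that $[\varphi(Q),\phi(C)]=\phi(C)$. We have $\phi(C)\varphi(Q)=0$ because $\phi(C)$ annihilates $\psi^{0}(N)$ and $\varphi(Q)$ maps into it. We have $\varphi(Q)\phi(C)=\phi(C)$ provided $\varphi(Q)$ acts as the identity on $\psi^{0}(N)$. This is the point needing care: $\varphi(Q)\in\Omega_{2}$ only gives range inside $\psi^{0}(N)$. To get it, we use $[\psi(Q),\psi(C')]=\psi(C')$ for all $C'\in\mathcal{J}(\mathcal{L},N)$ and the surjectivity $\phi(\mathcal{J}(\mathcal{L},N))=\mathcal{J}(\mathcal{L},\psi^{0}(N))$. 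This gives $\varphi(Q)D-D\varphi(Q)\in D+\mathbb{C}I$ for all $D\in\mathcal{J}(\mathcal{L},\psi^{0}(N))$. Taking $D$ rank one, the difference $\varphi(Q)D-D\varphi(Q)-D$ has finite rank, so the scalar is zero. Hence $\varphi(Q)D=D$, which forces the $(1,1)$ block of $\varphi(Q)$ to be $1$.

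\textbf{Step 2: the summand $\xi\otimes f$.} Fix $f\in (I_{-}^{\mathcal{N}})^{\perp}$ with $\langle\xi,f\rangle=0$ (equivalently $f\in(I_{-}^{\mathcal{N}}\vee P_{\xi})^{\perp}$), and $f_{0}\in(I_{-}^{\mathcal{N}})^{\perp}$ with $\langle\xi,f_{0}\rangle=1$. A direct computation gives $[\xi\otimes f_{0},\xi\otimes f]=\xi\otimes f$. By Proposition \ref{xi-f00}, $\psi(\xi\otimes f_{0})\in\xi\otimes\tilde f_{0}+\mathbb{C}I$ with $\langle\xi,\tilde f_{0}\rangle=1$. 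By Remark \ref{xi-eta1}(2), $\phi(\xi\otimes f)=\xi\otimes\tilde f$ with $\tilde f\in(I_{-}^{\mathcal{N}}\vee P_{\xi})^{\perp}$. Therefore
$$
\psi(\xi\otimes f)=[\xi\otimes\tilde f_{0},\xi\otimes\tilde f]=\langle\xi,\tilde f_{0}\rangle\,\xi\otimes\tilde f-\langle\xi,\tilde f\rangle\,\xi\otimes\tilde f_{0}=\xi\otimes\tilde f=\phi(\xi\otimes f).
$$

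The only genuine obstacle is the verification in Step 1 that $\varphi(Q)$ restricts to the identity on $\psi^{0}(N)$. The finite-rank argument above handles it; the rest is bookkeeping.
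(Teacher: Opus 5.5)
Your proposal is correct and is essentially the paper's argument, namely the proof of Corollary~\ref{=J(L,E)} carried over to the case $0_{+}^{\mathcal{N}}\neq 0$: you write $C=[Q,C]$ with an idempotent $Q\in\Omega_{2}(\mathcal{L},N)$ and $\xi\otimes f=[\xi\otimes f_{0},\xi\otimes f]$, then invoke Proposition~\ref{Omega2}, Corollary~\ref{J(L,N)2}, Proposition~\ref{xi-f00} and Remark~\ref{xi-eta1}; your finite-rank argument just re-derives the fact, obtained inside the proof of Proposition~\ref{Omega1}, that $\varphi(Q)$ is the identity on $\psi^{0}(N)$. One small tightening: $Q_{12}$ cannot be arbitrary, because $Q\in{\rm{Alg}}\mathcal{L}$ forces $Q\xi=0$ (i.e.\ $Q_{12}\xi_{N^{\perp}}=-\xi_{N}$), so take for instance $Q=E_{N}$, that is, $Q_{12}=-e_{N}$.
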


\section{Main Results}
Let $0<N<I_{-}^{\mathcal{N}}$. We represent operators by $3\times 3$ matrices (resp. $2\times 2$ matrices) relative to the decomposition
$$\mathcal{H}=N\oplus (I_{-}^{\mathcal{N}}\ominus N)\oplus (I_{-}^{\mathcal{N}})^{\perp} \ ({\rm{resp.}} \ \mathcal{H}=N\oplus N^{\perp}).$$
The images of the operators under the mappings $\psi,\phi,\varphi$ are expressed with respect to the decomposition
$$\mathcal{H}=\psi^{0}(N)\oplus (I_{-}^{\mathcal{N}}\ominus \psi^{0}(N))\oplus (I_{-}^{\mathcal{N}})^{\perp} \ ({\rm{resp.}} \ \mathcal{H}=\psi^{0}(N)\oplus \psi^{0}(N)^{\perp}).$$
In the rest of this proof, we always work under the above decomposition without further mention.

\begin{theorem}
Let $\psi$ be a Lie automorphism on ${\rm{Alg}}\mathcal{L}$. Then there are an automorphism $\epsilon$ and a linear functional $\tau$ on ${\rm{Alg}}\mathcal{L}$ vanishing on each commutator such that
$$
\psi(A)=\epsilon(A)+\tau(A)I, \ \forall A\in{\rm{Alg}}\mathcal{L}.
$$
\end{theorem}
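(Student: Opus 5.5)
The plan is to construct $\epsilon$ from the action of $\psi$ on idempotents and on the ideals $\mathcal{J}(\mathcal{L},N)$, using the structural results of Section 4. Afterwards I will show that $\psi-\epsilon$ takes values in $\mathbb{C}I$ and kills commutators. In both subcases ($0_{+}^{\mathcal{N}}=0$, or $0_{+}^{\mathcal{N}}\neq0$, where Corollary \ref{order-preserving} applies) I will use that $\psi^{0}$ may be taken order-preserving. If $\psi^{0}$ is order-reversing when $0_{+}^{\mathcal{N}}=0$, the argument of Corollary \ref{order-preserving} should go through. It compares $\Omega_{2}(\mathcal{L},N)$ and $\Omega_{2}(\mathcal{L},I_{-}^{\mathcal{N}})$, which are mapped into each other by Propositions \ref{Omega1} and \ref{Omega2}, and this yields a contradiction. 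So order-preserving is the only remaining case, consistent with the absence of anti-automorphisms in the statement.

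\textbf{Step 1 (spatial data).} Fix $N$ with $0<N<I_{-}^{\mathcal{N}}$ and work in the $3\times3$ decompositions above. Proposition \ref{Omega1} says that $\varphi$ maps idempotents $Q=\left(\begin{smallmatrix}Q_{11}&Q_{12}&Q_{13}\\0&0&0\\0&0&0\end{smallmatrix}\right)$ to idempotents of the same shape relative to $\psi^{0}(N)$. Corollaries \ref{J(L,N)} and \ref{=J(L,E)} say that $\psi=\phi$ maps $\mathcal{J}(\mathcal{L},N)$ linearly onto $\mathcal{J}(\mathcal{L},\psi^{0}(N))$. For $C=x\otimes f\in\mathcal{J}(\mathcal{L},N)$, with $x\in N$ and $f\in(N\vee P_\xi)^{\perp}$, I will use the identities $[Q,x\otimes f]=Qx\otimes f$ and $[x\otimes f,Q]=-x\otimes Q^{*}f$ for suitable $Q$ of the above form, and their analogues with $Q\in\Omega_{2}(\mathcal{L},I_{-}^{\mathcal{N}})$. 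From these I will deduce, exactly as in the Marcoux--Sourour treatment of nest algebras, that rank-one operators go to rank-one operators. This gives $\psi(x\otimes f)=Sx\otimes Tf$ for linear maps $S$ on $\cup_{N}N$ and $T$ on the appropriate orthocomplements, satisfying $\langle Sx,Tf\rangle=\langle x,f\rangle$ whenever both sides are defined. The vectors $\xi\otimes f$ are handled by Lemma \ref{xi-f0}, Proposition \ref{xi-f00} and Remark \ref{xi-eta1}, which give $\psi(\xi\otimes f)=\xi\otimes\tilde f$. I expect them to force $S\xi=\xi$ (up to scaling) after extending $S$ linearly to $\mathcal{H}$. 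Next I will show that $S$ is bounded and invertible, via closed graph, and that $T=(S^{-1})^{*}$.

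\textbf{Step 2 (define $\epsilon$ and compare).} Put $\epsilon(A)=SAS^{-1}$. Since $S$ fixes $P_{\xi}$ and maps each $N\in\mathcal{N}$ onto $\psi^{0}(N)$, $S$ maps $\mathcal{L}$ to $\mathcal{L}$, so $\epsilon$ is an automorphism of ${\rm{Alg}}\mathcal{L}$. Set $\delta=\psi-\epsilon$. By construction $\delta$ vanishes on all rank-one elements of $\bigcup_{N}\mathcal{J}(\mathcal{L},N\vee P_{\xi})$. For arbitrary $A\in{\rm{Alg}}\mathcal{L}$ and $R=x\otimes f$ of that kind, both $[A,R]$ and $R$ lie in such ideals, and $\psi([A,R])=\epsilon([A,R])$. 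This gives $[\delta(A),\epsilon(R)]=0$ for all such $R$. Since $\epsilon(R)$ runs through all rank-one elements of the ideals $\mathcal{J}(\mathcal{L},N)$ and $\mathcal{J}(\mathcal{L},P_{\xi})$, the argument of Theorem \ref{J(L,E)}(1) shows that $\delta(A)$ acts as a scalar on every $N$ and on $(N\vee P_{\xi})^{\perp}$. Hence $\delta(A)=\tau(A)I$. Finally, $\psi$ and $\epsilon$ are both Lie homomorphisms, so $\tau([A,B])I=[\psi A,\psi B]-[\epsilon A,\epsilon B]=0$, because the scalar parts commute out.

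\textbf{Main obstacle.} The hardest step is Step 1: extracting $S$ and $T$ and proving they are bounded, mutually adjoint-inverse, and compatible with $\xi$. The difficulty is that $\psi$ is only known on the ideals and on the idempotent families modulo scalars. I would first fix one pair $x_{0}\in N$, $f_{0}\in(I_{-}^{\mathcal{N}}\vee P_{\xi})^{\perp}$ with $\langle x_0,f_0\rangle$ arranged suitably, and define $Sx$ by $\psi(x\otimes f_0)=Sx\otimes g_0$. Applying the three-term identity $\psi(x_{1}\otimes f_{3})=\phi(x_{1}\otimes f_{1})\phi(x_{2}\otimes f_{2})\phi(x_{3}\otimes f_{3})$ from Lemma \ref{order} then yields multiplicativity, and from multiplicativity the relation $\langle Sx,Tf\rangle=\langle x,f\rangle$.
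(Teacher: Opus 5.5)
Your route is genuinely different from the paper's. You aim for a spatial implementation $\epsilon={\rm Ad}\,S$ built from rank-one data. The paper never constructs $S$: it splits $A=A_1+X+A_2$ using the idempotent $E_1$, treats the corners $\mathcal{A}_1\cong\mathcal{T}(\mathcal{N}_1)$ (via Marcoux--Sourour) and $\mathcal{A}_2\cong\mathcal{T}(\mathcal{N}_2)$ (via idempotents), and glues with $\phi$ on $X\in\mathcal{J}(\mathcal{L},I_-^{\mathcal N})$.

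However, your Step 2 has a genuine gap. Every rank-one $R\in\bigcup_N\mathcal{J}(\mathcal{L},N\vee P_\xi)$ is either $x\otimes f$ with $x\in N$, $f\in(N\vee P_\xi)^\perp$, or $\xi\otimes f$. Their ranges lie in $I_-^{\mathcal N}\vee P_\xi$. The $f$'s span a dense subset of $(0_+^{\mathcal N}\vee P_\xi)^\perp$ only, or of $\{\xi\}^\perp$ if $0_+^{\mathcal N}=0$. So $[\delta(A),\epsilon(R)]=0$ for all such $R$ gives only $\delta(A)\in\mathbb{C}I+\bigcap_N\mathcal{J}(\mathcal{L},N\vee P_\xi)$. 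That intersection contains $\mathcal{J}(\mathcal{L},P_\xi)=\{\xi\otimes h\}\neq 0$, so ``hence $\delta(A)=\tau(A)I$'' does not follow.

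This is not a technicality: the data you use cannot determine $S$ on $(I_-^{\mathcal N}\vee P_\xi)^\perp$. Take $K=\xi\otimes h$ with $0\neq h\in(I_-^{\mathcal N}\vee P_\xi)^\perp$. Or, when $0_+^{\mathcal N}\neq 0$, take any $K\neq 0$ with range in $0_+^{\mathcal N}$ and $K(I_-^{\mathcal N}\vee P_\xi)=0$.

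\begin{itemize}
\item $U=I+K$ is the identity on $I_-^{\mathcal N}\vee P_\xi$, which contains every element of $\mathcal{L}$ other than $I$. So ${\rm Ad}\,U$ is an automorphism of ${\rm Alg}\mathcal{L}$, with $U^{-1}=I-K$.
\item ${\rm Ad}\,U$ fixes every element of every $\mathcal{J}(\mathcal{L},N\vee P_\xi)$, since $KR=RK=0$ there. The second choice of $K$ also fixes $\xi\otimes f_0$.
\item Since $KAK=0$, you get ${\rm Ad}\,U(A)-A=[K,A]$. For $A=y\otimes g\in{\rm Alg}\mathcal{L}$ with $0\neq g\in(I_-^{\mathcal N}\vee P_\xi)^\perp$ and $Ky\neq 0$, this equals $Ky\otimes g\neq 0$, which is not scalar.
\end{itemize}

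So for $\psi={\rm Ad}\,U$ your construction cannot tell $S=U$ from $S=I$, and with $S=I$ your $\delta$ is not scalar-valued. The missing ingredient is information from outside the ideals:
\begin{itemize}
\item $\xi\otimes f_0$, which ${\rm Ad}\,U$ sends to $\xi\otimes(f_0-h)$ for the first choice of $K$;
\item the idempotents $E_N$, and more generally $Q\in\Omega_2(\mathcal{L},N)$ with $Q|_N=I_N$, which satisfy ${\rm Ad}\,U(Q)=Q-K$ for the second choice;
\item the lower corner of ${\rm Alg}\mathcal{L}$.
\end{itemize}
These must be used both to pin down $S$ off $I_-^{\mathcal N}\vee P_\xi$ (you need $\epsilon(E_N)=\varphi(E_N)$ and $\epsilon(\xi\otimes f_0)=\varphi(\xi\otimes f_0)$) and to finish the scalar argument. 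That is exactly what the paper's $\tilde e_N$, $E_1$, $E_2$ and the corner maps $\epsilon_1,\epsilon_2$ accomplish.

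A minor point: your justification that $\psi^0$ is order-preserving when $0_+^{\mathcal N}=0$ is circular. Proposition \ref{Omega1} rests on Lemma \ref{xi-f0}, which already assumes order-preservation. Argue instead that $\mathcal{N}^0$ then has a largest element $I_-^{\mathcal N}$ but no smallest one, so an order-reversing bijection is impossible.
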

\begin{proof}
The proof of the theorem proceeds in six steps.

Step 1: For any operator
$$
A=\left(\begin{array}{ccc}
A_{11}&A_{12}&A_{13}\\
0&A_{22}&A_{23}\\
0&0&0
\end{array}\right)\in{\rm{Alg}}\mathcal{L},
$$
we have $[A,\xi\otimes f]=0$ for all $f\in (I_{-}^{\mathcal{N}})^{\perp}$. Combining Remark \ref{xi-eta}, Lemma \ref{xi-f0}, Proposition \ref{xi-f00} and Remark \ref{xi-eta1}, we obtain $[\psi(A),\xi\otimes f]=0$ for any $f\in (I_{-}^{\mathcal{N}})^{\perp}$. Thus, there is some $\lambda\in\mathbf{C}$ such that
\begin{align}\label{11}
\psi(A)=\left(\begin{array}{ccc}
\tilde{A}_{11}&\tilde{A}_{12}&\tilde{A}_{13}\\
0&\tilde{A}_{22}&\tilde{A}_{23}\\
0&0&0
\end{array}\right)+\lambda I.
\end{align}
Take $A\in{\rm{Alg}}\mathcal{L}$ to be of the form
$
\left(\begin{array}{ccc}
0&0&0\\
0&A_{22}&A_{23}\\
0&0&0
\end{array}\right).
$
For any
$$
C=\left(\begin{array}{ccc}
0&0&C_{13}\\
0&0&0\\
0&0&0
\end{array}\right)\in\mathcal{J}(\mathcal{L},N),
$$
there exist scalars $\lambda,\mu\in\mathbb{C}$ such that
$$
\psi(A)=\left(\begin{array}{ccc}
\tilde{A}_{11}&\tilde{A}_{12}&\tilde{A}_{13}\\
0&\tilde{A}_{22}&\tilde{A}_{23}\\
0&0&0
\end{array}\right)+\lambda I \ {\rm{and}} \
\psi(C)=
\left(\begin{array}{ccc}
0&0&\tilde{C}_{13}\\
0&0&0\\
0&0&0
\end{array}\right)+\mu I.
$$

From the Lie product $[A,C]$, we consider $[\psi(A),\psi(C)]$. This yields $\tilde{A}_{11}\tilde{C}_{13}=0$, 
and hence $\tilde{A}_{11}=0$. Consequently, the mapping $\psi$ acts as
\begin{align}\label{12}
\left(\begin{array}{ccc}
0&0&0\\
0&A_{22}&A_{23}\\
0&0&0
\end{array}\right)\mapsto
\left(\begin{array}{ccc}
0&\tilde{A}_{12}&\tilde{A}_{13}\\
0&\tilde{A}_{22}&\tilde{A}_{23}\\
0&0&0
\end{array}\right)+\lambda I, \ {\rm{for \ some}} \ \lambda\in\mathbb{C}.
\end{align}


For any
$
B=\left(\begin{array}{ccc}
0&0&B_{13}\\
0&0&B_{23}\\
0&0&B_{33}
\end{array}\right)\in{\rm{Alg}}\mathcal{L} \ {\rm{and \ let \ }}
\psi(B)=\left(\begin{array}{ccc}
\tilde{B}_{11}&\tilde{B}_{12}&\tilde{B}_{13}\\
0&\tilde{B}_{22}&\tilde{B}_{23}\\
0&0&\tilde{B}_{33}
\end{array}\right).
$
Consider $[B,P]$ for any
$
P=\left(\begin{array}{ccc}1&P_{12}&P_{13}\\0&0&0\\0&0&0\end{array}\right)
$,
by Propositions \ref{Omega1}, \ref{Omega2}, we assume that
$
\varphi(P)=\left(\begin{array}{ccc}1&\tilde{P}_{12}&\tilde{P}_{13}\\0&0&0\\0&0&0\end{array}\right).
$
Thus, by corollaries \ref{J(L,N)}, \ref{J(L,N)2}, we obtain $\tilde{B}_{11}\tilde{P}_{12}-\tilde{B}_{12}-\tilde{P}_{12}\tilde{B}_{22}=0$. Which implies $\tilde{B}_{12}=0$, and there is some scalar $\lambda\in\mathbb{C}$ such that $\tilde{B}_{11}=\lambda$ and $\tilde{B}_{22}=\lambda$. Therefore, the mapping $\psi$ acts as
\begin{align}\label{13}
\left(\begin{array}{ccc}
0&0&B_{13}\\
0&0&B_{23}\\
0&0&B_{33}
\end{array}\right)\mapsto
\left(\begin{array}{ccc}
0&0&\tilde{B}_{13}\\
0&0&\tilde{B}_{23}\\
0&0&\tilde{B}_{33}
\end{array}\right)+\lambda I, \ {\rm{for \ some}} \ \lambda\in\mathbb{C}.
\end{align}

Now take $A$ to be of the form
$
\left(\begin{array}{ccc}
A_{11}&A_{12}&A_{13}\\
0&0&0\\
0&0&0
\end{array}\right)\in{\rm{Alg}}\mathcal{L}.
$
For any
$
B=\left(\begin{array}{ccc}
0&0&B_{13}\\
0&0&B_{23}\\
0&0&B_{33}
\end{array}\right)\in{\rm{Alg}}\mathcal{L},
$
by Eq. (\ref{11}) and Eq. (\ref{13}), there exist two scalars $\lambda,\mu\in\mathbb{C}$ such that
$$
\psi(A)=\left(\begin{array}{ccc}
\tilde{A}_{11}&\tilde{A}_{12}&\tilde{A}_{13}\\
0&\tilde{A}_{22}&\tilde{A}_{23}\\
0&0&0
\end{array}\right)+\lambda I \ {\rm{and}} \
\psi(B)=
\left(\begin{array}{ccc}
0&0&\tilde{B}_{13}\\
0&0&\tilde{B}_{23}\\
0&0&\tilde{B}_{33}
\end{array}\right)+\mu I.
$$
From the Lie product $[A,B]$, we consider $[\psi(A),\psi(B)]$. This yields
$$
\tilde{A}_{22}\tilde{B}_{23}+\tilde{A}_{23}\tilde{B}_{33}=0.
$$
For any $x\in I_{-}^{\mathcal{N}}\ominus N$, $y\in(I_{-}^{\mathcal{N}})^{\perp}$, and nonzero element $f\in (I_{-}^{\mathcal{N}}\vee P_{\xi})^{\perp}$, let us choose $\tilde{B}_{23}=x\otimes f$ and $\tilde{B}_{33}=y\otimes f$. Substituting into the above equation gives $\tilde{A}_{22}=0$ and $\tilde{A}_{23}=0$. Consequently, the mapping $\psi$ acts as
\begin{align}\label{14}
\left(\begin{array}{ccc}
A_{11}&A_{12}&A_{13}\\
0&0&0\\
0&0&0
\end{array}\right)\to
\left(\begin{array}{ccc}
\tilde{A}_{11}&\tilde{A}_{12}&\tilde{A}_{13}\\
0&0&0\\
0&0&0
\end{array}\right)+\lambda I, \ {\rm{for \ some}} \ \lambda\in\mathbb{C}.
\end{align}

Step 2: With the decomposition $\mathcal{H}=N\oplus N^{\perp}$, we can express $\xi$ as $\xi = \xi_{N} \oplus \xi_{N^{\perp}}$. Define $e_{N}=\frac{1}{\|\xi_{N^{\perp}}\|^2} \xi_{N}\otimes\xi_{N^{\perp}}$ and
$$
E_{N}=\left(\begin{array}{cc}
1&-e_{N}\\
0&0\\
\end{array}\right).
$$
By Propositions \ref{Omega1}, \ref{Omega2}, there exists an operator $\tilde{e}_{N}\in\mathcal{B}(\psi^{0}(N)^{\perp},\psi^{0}(N))$ such that $\psi(E_{N})-\tilde{E}_{N}\in\mathbb{C}I$, where
$$
\tilde{E}_{N}=\left(\begin{array}{cc}
1&-\tilde{e}_{N}\\
0&0\\
\end{array}\right)\in{\rm{Alg}}\mathcal{L}.
$$
Now, consider the algebra $\mathcal{A}_{N}$ generated by operators of the form
$\left(\begin{array}{cc}
A_{N}&-A_{N}e_{N}\\
0&0\\
\end{array}\right)
$
in ${\rm{Alg}}\mathcal{L}$.
For any $A\in\mathcal{A}_{N}$, by Eq. (\ref{14}), there is an operator $\tilde{A}\in{\rm{Alg}}\mathcal{L}$ such that $\psi(A)-\tilde{A}\in\mathbb{C}I$, where
$$
\tilde{A}=\left(\begin{array}{cc}
\tilde{A}_{N}&X\\
0&0\\
\end{array}\right).
$$
Since $[A,E_{N}]=0$, it follows that $[\psi(A),\tilde{E}_{N}]=0$, which implies $X=-\tilde{A}_{N}\tilde{E}_{N}$. Therefore, the mapping $\psi$ satisfies
\begin{align}\label{3.1}
\left(\begin{array}{cc}
A_{N}&-A_{N}e_{N}\\
0&0\\
\end{array}\right)\mapsto
\left(\begin{array}{cc}
\tilde{A}_{N}&-\tilde{A}_{N}\tilde{e}_{N}\\
0&0\\
\end{array}\right)+\mathbb{C}I.
\end{align}

Let $\tilde{\mathcal{A}}_{N}$ be the algebra generated by operator of the form
$
\left(\begin{array}{cc}
\tilde{A}_{N}&-\tilde{A}_{N}\tilde{e}_{N}\\
0&0\\
\end{array}\right)
$
in ${\rm{Alg}}\mathcal{L}$.
Then, we can define the linear bijective mapping $\epsilon_{N}:\mathcal{A}_{N}\to\tilde{\mathcal{A}}_{N}$ with $\psi(A)-\epsilon_{N}(A)\in\mathbb{C}I$ for any $A\in\mathcal{A}_{N}$. Indeed, since $\psi$ is a linear mapping, we have $\epsilon_{N}$ is a linear mapping.  For injectivity, suppose $A\in\mathcal{A}_{N}$ satisfies $\epsilon_{N}(A)=0$ , then $\psi(A)\in\mathbb{C}I$, this implies $A\in\mathbb{C}I$, that is say $A=0$. Next we prove the surjective. We may apply the same argument to $\psi^{-1}$. Given any
$$Y=\left(\begin{array}{cc}
\tilde{A}_{N}&-\tilde{A}_{N}\tilde{e}_{N}\\
0&0
\end{array}\right)\in\tilde{\mathcal{A}}_{N},
$$
there exists
$$
X=\left(\begin{array}{cc}
A_{N}&Z\\
0&0
\end{array}\right)\in{\rm{Alg}}\mathcal{L}
$$
and a scalar $\lambda\in\mathbb{C}$ such that $\psi^{-1}(Y)=X+\lambda I$. Since $\psi$ is linear and $\psi(\mathbb{C}I)=\mathbb{C}I$, it follows that $Z=-A_{N}e_{N}$. Then $X\in\mathcal{A}_{N}$, and by the definition of $\epsilon_{N}$, we have $\epsilon_{N}(X)=Y$.

Step 3: The result of Step 2 remains valid for $N=I_{-}^{\mathcal{N}}$. We now turn to proving that $\epsilon_{I_{-}^{\mathcal{N}}}$ is an automorphism. For simplicity, we denote it by $\epsilon_{1}$, and set $\mathcal{A}_{1}=\mathcal{A}_{I_{-}^{\mathcal{N}}}$, $\tilde{\mathcal{A}}_{1}=\tilde{\mathcal{A}}_{I_{-}^{\mathcal{N}}}$, $e=e_{I_{-}^{\mathcal{N}}}$, $\tilde{e}=\tilde{e}_{I_{-}^{\mathcal{N}}}$, $E_{1}=E_{I_{-}^{\mathcal{N}}}$ and $\tilde{E}_{1}=\tilde{E}_{I_{-}^{\mathcal{N}}}$.

Let $\mathcal{N}_{1}=\{N\in\mathcal{N}:N\leq I_{-}^{\mathcal{N}}\}$. Consider the mapping
$$
\mathcal{T}(\mathcal{N}_{1})\to\mathcal{A}_{1}\to\tilde{\mathcal{A}}_{1}\to\mathcal{T}(\mathcal{N}_{1}),
$$
where
$$
\left(\begin{array}{cc}
A_{1}&0\\
0&0\\
\end{array}\right)
\mapsto\left(\begin{array}{cc}
A_{1}&-A_{1}e\\
0&0\\
\end{array}\right)\mapsto
\left(\begin{array}{cc}
\tilde{A}_{1}&-\tilde{A}_{1}\tilde{e}\\
0&0\\
\end{array}\right)\mapsto
\left(\begin{array}{cc}
\tilde{A}_{1}&0\\
0&0\\
\end{array}\right).
$$
According to \cite{Marcoux-Sourour}, we have $\epsilon_{1}=\phi_{1}+\tau_{1}$, where $\phi_{1}$ is an isomorphism and $\tau_{1}$ is a linear central-value mapping that vanishes on every commutator.

Let
$
\phi_{1}(E_{1})=\left(\begin{array}{cc}
\bar{E}_{11}&-\bar{E}_{11}\tilde{e}\\
0&0
\end{array}\right) \ {\rm{and}} \
\tau_{1}(E_{1})=\left(\begin{array}{cc}
\lambda&-\lambda\tilde{e}\\
0&0
\end{array}\right) .
$
From $\epsilon_{1}(E_{1})=\phi_{1}(E_{1})+\tau_{1}(E_{1})$, we obtain $1=\bar{E}_{11}+\lambda$. Since $\bar{E}_{11}$ is a nonzero idempotent, it follows that $\lambda=0$, and hence $\tau_{1}(E_{1})=0$.

Next, we prove $\tau_{1}=0$. For any idempotent
$$
A=\left(\begin{array}{cc}
A_{11}&-A_{11}e\\
0&0
\end{array}\right)
$$
with $A_{11}\neq 0,1$, let
$$
\epsilon_{1}(A)=\left(\begin{array}{cc}
\tilde{A}_{11}&-\tilde{A}_{11}\tilde{e}\\
0&0
\end{array}\right), \
\tau_{1}(A)=\left(\begin{array}{cc}
\mu&-\mu\tilde{e}\\
0&0
\end{array}\right) .
$$
Then $\tilde{A}_{11}-\mu$ is an idempotent, this forces $\mu=0$. Hence, $\tau(A)=0$ for any idempotent $A$ in $\mathcal{A}_{1}$.

Next, we prove that $\tau(A)=0$ for any $A\in\mathcal{A}_{1}$. Indeed, under the decomposition $\mathcal{H}=0_{+}^{\mathcal{N}}\oplus (I_{-}^{\mathcal{N}}\ominus 0_{+}^{\mathcal{N}})\oplus (I_{-}^{\mathcal{N}})^{\perp}$, we only need to consider the form
$$
A=\left(\begin{array}{ccc}
A_{11}&0&A_{13}\\
0&A_{22}&A_{23}\\
0&0&0
\end{array}\right)\in\mathcal{A}_{1}.
$$
By Eqs. (\ref{14}) and (\ref{12}), the mapping $\epsilon_{1}$ satisfies
$$
A_{1}=\left(\begin{array}{ccc}
A_{11}&0&A_{13}\\
0&0&0\\
0&0&0
\end{array}\right)\mapsto
\left(\begin{array}{ccc}
\tilde{A}_{11}&\tilde{A}_{12}&\tilde{A}_{13}\\
0&0&0\\
0&0&0
\end{array}\right)
$$
and
$$
A_{2}=\left(\begin{array}{ccc}
0&0&0\\
0&A_{22}&A_{23}\\
0&0&0
\end{array}\right)\mapsto
\left(\begin{array}{ccc}
0&\ast&\ast\\
0&\ast&\ast\\
0&0&0
\end{array}\right).
$$
The first mapping implies that $(I_{-}^{\mathcal{N}}\ominus 0_{+}^{\mathcal{N}})(\phi_{1}(A_{1})+\tau(A_{1}))=0$, and consequently,
\begin{align*}
-\tau(A_{1})(I_{-}^{\mathcal{N}}\ominus 0_{+}^{\mathcal{N}})&=(I_{-}^{\mathcal{N}}\ominus 0_{+}^{\mathcal{N}})\phi_{1}(A_{1})\\
&=(I_{-}^{\mathcal{N}}\ominus 0_{+}^{\mathcal{N}})\phi_{1}(E_{0_{+}^{\mathcal{N}}}A_{1})\\
&=(I_{-}^{\mathcal{N}}\ominus 0_{+}^{\mathcal{N}})\phi_{1}(E_{0_{+}^{\mathcal{N}}})\phi_{1}(A_{1})\\
&=(I_{-}^{\mathcal{N}}\ominus 0_{+}^{\mathcal{N}})\epsilon_{1}(E_{0_{+}^{\mathcal{N}}})\phi_{1}(A_{1})\\
&=0,
\end{align*}
which implies $\tau_{1}(A_{1})=0$.

Similarly, we have $(\phi_{1}(A_{2})+\tau_{1}(A_{2}))0_{+}^{\mathcal{N}}=0$, and thus
$$
\phi_{1}(A_{2}) 0_{+}^{\mathcal{N}}=-\tau_{1}(A_{2}) 0_{+}^{\mathcal{N}}.
$$
Consequently,
\begin{align*}
-\tau_{1}(A_{2})0_{+}^{\mathcal{N}}&=\phi_{1}(A_{2})0_{+}^{\mathcal{N}}\\
&=\phi_{1}((E_{1}-E_{0_{+}^{\mathcal{N}}})A_{2})0_{+}^{\mathcal{N}}\\
&=\phi_{1}(E_{1}-E_{0_{+}^{\mathcal{N}}})\phi_{1}(A_{2})0_{+}^{\mathcal{N}}\\
&=\epsilon_{1}(E_{1}-E_{0_{+}^{\mathcal{N}}})\phi_{1}(A_{2})0_{+}^{\mathcal{N}}\\
&=0,
\end{align*}
so $\tau_{1}(A_{2})=0$. Therefore, $\tau_{1}=0$, and hence $\epsilon_{1}=\phi_{1}$ is an isomorphism.

Step 4: Let $\mathcal{N}_{2}=\{0,(I_{-}^{\mathcal{N}}\vee P_{\xi})\ominus I_{-}^{\mathcal{N}},I\ominus (I_{-}^{\mathcal{N}}\vee P_{\xi})\}$. For any
$$
B=\left(\begin{array}{cc}
0&eB_{2}\\
0&B_{2}
\end{array}\right)\in{\rm{Alg}\mathcal{L}},
$$
set
$$
E_{2}=\left(\begin{array}{cc}
0&e\\
0&1
\end{array}\right).
$$
By Eq. (\ref{13}), there exists a scalar $\lambda\in\mathbb{C}$ such that $\psi(B)=\tilde{B}+\lambda I$. Together with the Lie product $[E_{1},B]$, we may assume that
\begin{align}\label{4.1}
\tilde{B}=
\left(\begin{array}{cc}
0&\tilde{e}\tilde{B}_{2}\\
0&\tilde{B}_{2}
\end{array}\right).
\end{align}

Now consider the mapping
$$
\mathcal{T}(\mathcal{N}_{2})\to\mathcal{A}_{2}\overset{\epsilon_{2}}\to\tilde{\mathcal{A}}_{2}\to\mathcal{T}(\mathcal{N}_{2})
$$
where
$$
\left(\begin{array}{cc}
0&0\\
0&A_{2}\\
\end{array}\right)
\mapsto\left(\begin{array}{cc}
0&eA_{2}\\
0&A_{2}\\
\end{array}\right)\mapsto
\left(\begin{array}{cc}
0&\tilde{e}\tilde{A}_{2}\\
0&\tilde{A}_{2}\\
\end{array}\right)\mapsto
\left(\begin{array}{cc}
0&0\\
0&\tilde{A}_{2}
\end{array}\right).
$$

From Lie products $[E_{2},B]$ and $[C,E_{2}]$ for any
$$
C=\left(\begin{array}{cc}
0&\ast\\
0&0
\end{array}\right).
$$
Then we obtain
$$
\epsilon_{2}(E_{2})=\left(\begin{array}{cc}
0&\tilde{e}\\
0&1
\end{array}\right)(:=\tilde{E}_{2}).
$$

Let $B_{1}\in\mathcal{T}(\mathcal{N}_{1})$ and $B_{2}\in\mathcal{T}(\mathcal{N}_{2})$ be nontrivial idempotents and set
$$
X=\left(\begin{array}{cc}
B_{1}&-B_{1}e+eB_{2}\\
0&B_{2}
\end{array}\right).
$$
Then $X\in{\rm{Alg}\mathcal{L}}$ is an idempotent. By Eq. (\ref{3.1}) and Eq. (\ref{4.1}), there is a scalar $\lambda\in\mathbb{C}$ such $\psi(X)=\tilde{X}+\lambda I$, where
$$
\tilde{X}=\left(\begin{array}{cc}
\tilde{B}_{1}&-\tilde{B}_{1}\tilde{e}+\tilde{e}\tilde{B}_{2}\\
0&\tilde{B}_{2}
\end{array}\right).
$$
By Proposition \ref{idem} and the fact that $\tilde{B}_{1}$ is an idempotent, we conclude that $\tilde{B}_{2}$ is also an idempotent. Then, similarly to Step 3, it follows that $\epsilon_{2}$ coincides with an isomorphism on every idempotent in $\mathcal{A}_{2}$. By \cite{Pearcy-Toppinp}, every operator in $\mathcal{T}(\mathcal{N}_{2})$ can be expressed as a combination of at most three idempotents; the same holds for $\mathcal{A}_{2}$. Consequently, $\epsilon_{2}$ coincides with an isomorphism on every element of $\mathcal{A}_{2}$, and hence $\epsilon_{2}$ is itself an isomorphism.

Step 5: With respect to the decomposition $\mathcal{H}=I_{-}^{\mathcal{N}}\oplus (I_{-}^{\mathcal{N}})^{\perp}$, for any $A\in{\rm{Alg}}\mathcal{L}$, we have
$$
A=\left(\begin{array}{cc}
A_{11}&A_{12}\\
0&A_{22}\\
\end{array}\right),
$$
and we can express $A$ in ${\rm{Alg}}\mathcal{L}$ as $A=A_{1}+X+A_{2}$, where
$$
A_{1}=\left(\begin{array}{cc}
A_{11}&-A_{11}e\\
0&0\\
\end{array}\right),
A_{2}=\left(\begin{array}{cc}
0&eA_{22}\\
0&A_{22}\\
\end{array}\right)
$$
and
$$
X=\left(\begin{array}{cc}
0&A_{12}+A_{11}e-eA_{22}\\
0&0\\
\end{array}\right).
$$
Now, we can define the mapping $\epsilon$ from ${\rm{Alg}}\mathcal{L}$ onto ${\rm{Alg}}\mathcal{L}$ with
$$
\epsilon(A)=\epsilon_{1}(A_{1})+\phi(X)+\epsilon_{2}(A_{2}).
$$

Next, we prove that $\epsilon$ is an automorphism. Based on the preceding discussion, it remains only to establish the multiplicative property of $\epsilon$.

With respect to the decomposition $\mathcal{H}=I_{-}^{\mathcal{N}}\oplus (I_{-}^{\mathcal{N}})^{\perp}$, for any $A,B\in{\rm{Alg}}\mathcal{L}$, we have
$$
A=\left(\begin{array}{cc}
A_{11}&A_{12}\\
0&A_{22}\\
\end{array}\right) \ {\rm{and}} \
B=\left(\begin{array}{cc}
B_{11}&B_{12}\\
0&B_{22}\\
\end{array}\right).
$$
We decompose $A$ as $A=A_{1}+X+A_{2}$, and $B=B_{1}+Y+B_{2}$.

Clearly, $\epsilon$ preserves the Lie product; thus, by direct calculation, we obtain the following identities:

(1) Since $[A_{1},Y]=A_{1}Y$ and $[\epsilon(A_{1}),\epsilon(Y)]=\epsilon(A_{1})\epsilon(Y)$, it follows that
$$\epsilon(A_{1}Y)=\epsilon(A_{1})\epsilon(Y).$$

(2) For $A_{1}B_{2}=0$ and $\epsilon(A_{1})\epsilon(B_{2})=0$, we have
$$\epsilon(A_{1}B_{2})=\epsilon(A_{1})\epsilon(B_{2}).$$

(3) Since $[X,B_{1}]=-B_{1}X$ and $[\epsilon(X),\epsilon(B_{1})]=-\epsilon(B_{1})\epsilon(X)$, it follows that
$$\epsilon(B_{1}X)=\epsilon(B_{1})\epsilon(X).$$

(4) For $XY=0$ and $\epsilon(X)\epsilon(Y)=0$, we have
$$\epsilon(XY)=\epsilon(X)\epsilon(Y).$$

(5) Since $[X,B_{2}]=XB_{2}$ and $[\epsilon(X),\epsilon(B_{2})]=\epsilon(X)\epsilon(B_{2})$, it follows that
$$\epsilon(XB_{2})=\epsilon(X)\epsilon(B_{2}).$$

(6) For $A_{2}B_{1}=0$ and $\epsilon(A_{2})\epsilon(B_{1})=0$, we have
$$\epsilon(A_{2}B_{1})=\epsilon(A_{2})\epsilon(B_{1}).$$

(7) For $A_{2}Y=0$ and $\epsilon(A_{2})\epsilon(Y)=0$, we have
$$\epsilon(A_{2}Y)=\epsilon(A_{2})\epsilon(Y).$$

(8)  From the definition of $\epsilon$ and the fact that $\epsilon_{1}$ is an isomorphism, we obtain
$$
\epsilon(A_{1}B_{1})=\epsilon(A_{1})\epsilon(B_{1}).
$$

(9) A completely analogous argument yields
$$
\epsilon(A_{2}B_{2})=\epsilon(A_{2})\epsilon(B_{2}).
$$

In summary, we obtain the multiplicativity of $\epsilon$, and hence $\epsilon$ is an isomorphism.

Step 6: Let $\tau=\psi-\epsilon$. Then the conclusion of the theorem follows.
\end{proof}

\section{The case $I_{-}^{\mathcal{N}}<I$ with $I_{-}^{\mathcal{N}}\vee P_{\xi}=I$}
In this section we assume $I_{-}^{\mathcal{N}}<I$ with $I_{-}^{\mathcal{N}}\vee P_{\xi}=I$. Let $\mathcal{N}_{1}=\{N\in\mathcal{N}:N\leq I_{-}^{\mathcal{N}}\}$ and $\mathcal{T}(\mathcal{N}_{1})$ be the corresponding nest algebras. Then ${\rm{Alg}}\mathcal{L}$ is isomorphic to $\mathcal{T}(\mathcal{N}_{1})\oplus \mathbb{C}$. Now, we assume that $\psi$ is a Lie automorphism from $\mathcal{T}(\mathcal{N}_{1})\oplus \mathbb{C}$ onto itself.

\begin{theorem}
Let $\psi:\mathcal{T}(\mathcal{N}_{1})\oplus \mathbb{C}\to\mathcal{T}(\mathcal{N}_{1})\oplus \mathbb{C}$ be a Lie automorphism. Then there exist an (algebraic) isomorphism or a negative of an (algebraic) anti-isomorphism $\epsilon$ on $\mathcal{T}(\mathcal{N}_{1})$, a linear functional $\tau_{2}:\mathbb{C}\to\mathbb{C}$ and two linear functionals $\tau_{1},\tau$ vanishing on the commutators of $\mathcal{T}(\mathcal{N}_{1}),\mathcal{T}(\mathcal{N}_{1})\oplus \mathbb{C}$, respectively, such that
$$\psi=((\epsilon+\tau_{1})\oplus \tau_{2})+\tau.$$
\end{theorem}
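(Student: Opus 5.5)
We plan to reduce the statement to the Marcoux--Sourour theorem for the nest algebra $\mathcal{T}(\mathcal{N}_{1})$. Write $\mathcal{A}=\mathcal{T}(\mathcal{N}_{1})\oplus\mathbb{C}$ and elements as $(A,\lambda)$. The centre of $\mathcal{A}$ is $Z=\mathbb{C}I_{1}\oplus\mathbb{C}$, which is two-dimensional. The commutator subspace is $[\mathcal{A},\mathcal{A}]=[\mathcal{T}(\mathcal{N}_{1}),\mathcal{T}(\mathcal{N}_{1})]\oplus 0$. Since $\psi$ is a Lie automorphism, it maps the centre onto the centre, so $\psi(Z)=Z$. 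It also maps $[\mathcal{A},\mathcal{A}]$ onto itself.

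\textbf{Step 1: the induced Lie map on $\mathcal{T}(\mathcal{N}_{1})$.} Let $\pi_{1}$ and $\pi_{2}$ be the coordinate projections. Define $\psi_{1}:\mathcal{T}(\mathcal{N}_{1})\to\mathcal{T}(\mathcal{N}_{1})$ by $\psi_{1}(A)=\pi_{1}\psi(A,0)$.

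Since $(0,1)$ is central, $\psi(0,1)\in Z$. Hence $\psi(A,\lambda)=(\psi_{1}(A),\pi_{2}\psi(A,0))+\lambda\psi(0,1)$. We then have
\[
\psi_{1}([A,B])=\pi_{1}[\psi(A,0),\psi(B,0)]=[\psi_{1}(A),\psi_{1}(B)],
\]
so $\psi_{1}$ is a Lie homomorphism.

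For bijectivity modulo scalars, we argue as follows. If $\psi_{1}(A)\in\mathbb{C}I_{1}$, then $\psi(A,0)\in Z$, so $(A,0)\in Z$ and $A\in\mathbb{C}I_{1}$. Surjectivity comes from surjectivity of $\psi$ together with $\psi(Z)=Z$.

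To obtain a genuine Lie automorphism, we handle possible degeneracy of $\psi_{1}(I_{1})$. If $\psi_{1}(I_{1})=0$, then $\psi(I_{1},0)=(0,c)$ with $c\neq0$, and $\psi(0,1)$ must have nonzero first coordinate. We then modify $\psi_{1}$ by adding a linear functional times $I_{1}$ that vanishes on commutators, so as to correct its value on $I_{1}$. The resulting map is $\psi_{1}'=\psi_{1}+\sigma(\cdot)I_{1}$, and it is a bijective Lie automorphism of $\mathcal{T}(\mathcal{N}_{1})$. The nest $\mathcal{N}_{1}$ is nontrivial, since $\mathcal{N}$ has at least four projections.

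\textbf{Step 2: apply Marcoux--Sourour.} By \cite{Marcoux-Sourour}, we get $\psi_{1}'=\epsilon+\tau_{1}'$, where $\epsilon$ is an isomorphism or the negative of an anti-isomorphism, and $\tau_{1}'$ is central-valued and vanishes on commutators. Then $\psi_{1}=\epsilon+\tau_{1}$ with $\tau_{1}=\tau_{1}'-\sigma$, which still vanishes on $[\mathcal{T}(\mathcal{N}_{1}),\mathcal{T}(\mathcal{N}_{1})]$.

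\textbf{Step 3: assemble the decomposition.} Set $\tau_{2}(\lambda)=\lambda\,\pi_{2}\psi(0,1)$. Define
\[
\tau(A,\lambda)=\big(\lambda\,\pi_{1}\psi(0,1),\ \pi_{2}\psi(A,0)\big),
\]
viewed as a $Z$-valued linear map, which in the statement is written as the functional part.

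On commutators, $(A,\lambda)=([B,C],0)$, so $\psi([B,C],0)\in[\mathcal{A},\mathcal{A}]$ has second coordinate $0$. Hence $\tau$ vanishes on $[\mathcal{A},\mathcal{A}]$. Comparing coordinates then gives $\psi=((\epsilon+\tau_{1})\oplus\tau_{2})+\tau$.

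\textbf{Main obstacle.} The delicate point is Step 1, namely producing an honest Lie automorphism of $\mathcal{T}(\mathcal{N}_{1})$ from $\psi$ when the centre of $\mathcal{A}$ is two-dimensional, because $\psi$ may mix $(I_{1},0)$ and $(0,1)$. I would treat this first by choosing $\sigma$ on $\mathbb{C}I_{1}$ so that $\psi_{1}'(I_{1})\in\mathbb{C}^{\times}I_{1}$. This relies on the fact that $I_{1}\notin[\mathcal{T}(\mathcal{N}_{1}),\mathcal{T}(\mathcal{N}_{1})]$, which needs justification, for instance via the diagonal-compression trace argument for nest algebras. Alternatively, \cite{Marcoux-Sourour} can be applied to Lie maps that are bijective modulo the centre, if that form is available there.
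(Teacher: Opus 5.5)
Your proof is correct, and its skeleton matches the paper's. You split off a central part of $\psi(\cdot,0)$ to get a Lie automorphism of $\mathcal{T}(\mathcal{N}_{1})$, apply Marcoux--Sourour, and handle the $\mathbb{C}$-summand separately. The differences are in the details, and they favour your version.

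For the $\mathbb{C}$-summand you use the fact that a Lie automorphism maps the centre $Z$ onto itself, so $\psi(0,1)\in Z$. The paper reaches the same conclusion by a detour through maximal commutative Lie ideals.

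More importantly, you check that the induced map on $\mathcal{T}(\mathcal{N}_{1})$ is bijective. The paper only asserts this for its map $\phi(A)=\pi_{1}\psi(A,0)-\pi_{2}\psi(A,0)I_{1}$, and the assertion can fail. Suppose $\mathcal{N}_{1}$ has a one-dimensional atom spanned by a unit vector $e$. Then $\sigma(A)=\langle Ae,e\rangle$ is multiplicative on $\mathcal{T}(\mathcal{N}_{1})$, and $\psi(A,\lambda)=(A,\lambda+\sigma(A))$ is a Lie automorphism. The paper's $\phi$ sends $I_{1}$ to $I_{1}-\sigma(I_{1})I_{1}=0$, so it is not injective. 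Your normalisation $\psi_{1}=\pi_{1}\psi(\cdot,0)$, corrected by $\sigma(\cdot)I_{1}$ in the degenerate case, is what is needed to invoke Marcoux--Sourour legitimately.

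\textbf{First small point.} You do not need to prove $I_{1}\notin\mathrm{span}[\mathcal{T}(\mathcal{N}_{1}),\mathcal{T}(\mathcal{N}_{1})]$ as a general fact about nest algebras, via traces or otherwise. You only use it when $\psi(I_{1},0)=(0,c)$ with $c\neq 0$. Since $\psi$ maps $\mathrm{span}[\mathcal{A},\mathcal{A}]=\mathrm{span}[\mathcal{T}(\mathcal{N}_{1}),\mathcal{T}(\mathcal{N}_{1})]\oplus 0$ into itself, $I_{1}$ lying in that span would force $c=0$. So the fact is automatic in the one case where you need it, and a suitable $\sigma$ then exists by pure linear algebra. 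If $\psi_{1}(I_{1})\neq 0$, take $\sigma=0$.

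\textbf{Second small point.} The statement asks for a scalar functional $\tau$, i.e.\ $\tau(\cdot)I$, not a $Z$-valued one. Write $\psi(0,1)=(\alpha I_{1},\beta)$ and replace your data as follows:
\begin{itemize}
\item set $\tau(A,\lambda)=\pi_{2}\psi(A,0)+\alpha\lambda$;
\item replace $\tau_{1}$ by $\tau_{1}-\pi_{2}\psi(\cdot,0)$;
\item replace $\tau_{2}$ by $\lambda\mapsto(\beta-\alpha)\lambda$.
\end{itemize}
The vanishing conditions survive because $\pi_{2}\psi$ kills $[\mathcal{A},\mathcal{A}]$.
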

\begin{proof}
For any
$$
A_{1}=\left(\begin{array}{cc}
A_{11}&0\\
0&0
\end{array}\right)\in \mathcal{T}(\mathcal{N}_{1})\oplus \mathbb{C},
$$
we have
$$
\psi(A_{1})=\left(\begin{array}{cc}
\bar{A}_{11}&0\\
0&\bar{A}_{22}
\end{array}\right)=
\left(\begin{array}{cc}
\tilde{A}_{11}&0\\
0&0
\end{array}\right)+\tau(A_{1})I,
$$
where $\tau:\mathcal{T}(\mathcal{N}_{1})\to \mathbb{C}$ is a linear functional. Define $\phi:\mathcal{T}(\mathcal{N}_{1})\to \mathcal{T}(\mathcal{N}_{1})$ by
$$
\phi(A_{1})=\psi(A_{1})-\tau(A_{1})I, \  A_{1}\in\mathcal{T}(\mathcal{N}_{1}).
$$
Then $\phi$ is a Lie automorphism on $\mathcal{T}(\mathcal{N}_{1})$. By the result in \cite{Marcoux-Sourour}, we have $\phi=\epsilon+\tau_{1}$, where $\epsilon$ is an (algebraic) isomorphism or a negative of an (algebraic) anti-isomorphism of $\mathcal{T}(\mathcal{N}_{1})$, and $\tau_{1}$ is a linear functional vanishing on each commutator of $\mathcal{T}(\mathcal{N}_{1})$.

Now consider
$$
A_{2}=\left(\begin{array}{cc}
0&0\\
0&a_{22}
\end{array}\right)\in \mathcal{T}(\mathcal{N}_{1})\oplus \mathbb{C}.
$$
Write
$$
\psi(A_{2})=\left(\begin{array}{cc}
\ast&0\\
0&\ast
\end{array}\right).
$$
We claim that $\psi(A_{2})$ has the form
$$
\psi(A_{2})=\left(\begin{array}{cc}
0&0\\
0&\tilde{a}_{22}
\end{array}\right)+\lambda I, \ \lambda\in\mathbb{C}.
$$
For any  maximal commutative Lie ideal $\mathcal{J}$ of $\mathcal{T}(\mathcal{N}_{1})\oplus \mathbb{C}$, there is a maximal commutative Lie ideal $\mathcal{J}_{0}$ of $\mathcal{T}(\mathcal{N}_{1})$ such that $\mathcal{J}=\mathcal{J}_{0}\oplus \mathbb{C}$. Since $A_{2}$ belongs to every maximal commutative Lie ideal, $\psi(A_{2})$ also belongs to every maximal commutative Lie ideal. Thus, if $0_{+}^{\mathcal{N}_{1}}=0$, the claim is clear.
If $0_{+}^{\mathcal{N}_{1}}\neq 0$, we use the decomposition $\mathcal{H}=0_{+}^{\mathcal{N}_{1}}\oplus (I_{-}^{\mathcal{N}}\ominus0_{+}^{\mathcal{N}_{1}})\oplus \mathbb{C}$, and we can assume that
$$
\psi(A_{2})=\left(\begin{array}{ccc}
0&\ast&0\\
0&0&0\\
0&0&\ast
\end{array}\right)+\lambda I, \ \lambda\in\mathbb{C}.
$$
Since $[A_{1},A_{2}]=0$ for any $A_{1}\in\mathcal{T}(\mathcal{N}_{1})$, we have $[\psi(A_{1}),\psi(A_{2})]=0$. This forces
$$
\psi(A_{2})=\left(\begin{array}{ccc}
0&0&0\\
0&0&0\\
0&0&\ast
\end{array}\right)+\lambda I, \ \lambda\in\mathbb{C}.
$$
Thus the claim is proved.

Consequently, there exists a linear functional $\tau_{2}:\mathbb{C}\to\mathbb{C}$ such that
$$
\psi(A_{2})=\left(\begin{array}{cc}
0&0\\
0&\tau_{2}(a_{22})
\end{array}\right)+\lambda I, \ \lambda\in\mathbb{C}.
$$

Now, we extend $\phi$ from $\mathcal{T}(\mathcal{N}_{1})$ to $\mathcal{T}(\mathcal{N}_{1})\oplus \mathbb{C}$ by setting $\psi(A_{2})-\phi(A_{2})\in\mathbb{C}I$; we still denote the extended map by $\phi$. Then $\phi=(\epsilon+\tau_{1})\oplus \tau_{2}$. Let $\tau=\psi-\phi$. Then $\tau_{1},\tau_{2},\tau$ satisfy the conditions stated in the theorem. This completes the proof.
\end{proof}

\noindent\textbf{Funding} The research of the first named author is supported by  NSF of China (12501161)  and NSF of Shandong Province,
China (ZR2025QC1514).
The third author is supported by
supported by NSF of China (12471124)  and NSF of Shandong Province,
China (ZR2025MS93).\\

\noindent\textbf{Data Availability} This article has no associated data. \\

\noindent\textbf{Declarations} \\

\noindent\textbf{Competing interests} The authors declared that they have no conflict of interest to this work.\\

\end{document}